\let\oldparagraph=\paragraph
\renewcommand\paragraph[1]{\oldparagraph{#1.}}
\let\oldexample=\example
\renewcommand\example[1]{\oldexample{. #1}}
\let\oldremark=\remark
\renewcommand\remark[1]{\oldremark{. #1}}
\newcommand{\X}{\mathbb{X}}
\newcommand{\Y}{\mathbb{Y}}
\newcommand{\cX}{\mathcal{X}}
\newcommand{\cL}{\mathcal{L}}
\newcommand{\cH}{\mathcal{H}}
\newcommand{\bN}{\mathbb{N}}
\newcommand{\bR}{\mathbb{R}}
\newcommand{\bX}{\mathbb{X}}
\newcommand{\bfX}{\mathbf{X}}
\newcommand{\bfY}{\mathbf{Y}}
\newcommand{\R}{\mathbb{R}}
\newcommand{\D}{\, \mathrm{d}}
\newcommand{\bP}{\mathbb{P}}
\newcommand{\bQ}{\mathbb{Q}}
\newcommand{\E}{\mathbb{E}}
\newcommand{\W}{\mathbb{W}}
\newcommand{\norm}[1]{\left\lVert#1\right\rVert}
\newcommand{\sprod}[2]{\langle #1, #2 \rangle}
\newcommand{\cS}{\mathcal{S}}
\begin{document}

\title{Rough kernel hedging 
}


\author{Nicola Muça Cirone \and
        Cristopher Salvi 
}


\institute{N. Muça Cirone \at
              Department of Mathematics \\
              Imperial College London\\
              \email{n.muca-cirone22@imperial.ac.uk} 
           \and
           C. Salvi \at
           Department of Mathematics and Imperial-X \\
              Imperial College London\\
              \email{c.salvi@imperial.ac.uk}
}

\date{}

\maketitle

\begin{abstract}
Building on the functional-analytic framework of operator-valued kernels and un-truncated signature kernels \cite{salvi2021signature}, we propose a scalable, provably convergent signature-based algorithm for a broad class of high-dimensional, path-dependent hedging problems. We make minimal assumptions on market dynamics by modelling them as general geometric rough paths, yielding a fully model-free approach. Moreover, by means of a representer theorem, we provide theoretical guarantees on the existence and uniqueness of a global minimum of the resulting optimization problem, and derive an analytic solution under highly general loss functions. Similar to the popular deep hedging \cite{buehler2019deep}—but in a more rigorous fashion—our method can also incorporate additional features by means of the underlying operator-valued kernel, such as trading signals, news analytics, and past hedging decisions, aligning closely with true machine-learning practice.

\keywords{Rough paths \and Signature kernels \and Hedging}
\subclass{46C07 \and 60L10 \and 60L20}
\end{abstract}

\section{Introduction}\label{intro}

In idealized, complete, and frictionless markets, it is theoretically possible to perfectly hedge financial derivatives, thereby eliminating risk through appropriate hedging strategies. However, real markets are incomplete due to transaction costs, market impact, liquidity constraints, and other frictions, making perfect hedging generally unattainable. Consequently, traders aim to find approximate hedging strategies that minimize a cost function selected according to their risk preferences.

There exists an extensive body of literature on hedging strategies in market models that include frictions, highlighting the complex nature of the problem. In markets where trading activities temporarily impact asset prices, \cite{rogers2010cost} analyses a one-dimensional Black-Scholes model where trading a security affects its price. The optimal trading strategy in this setting is derived by solving a system of three coupled nonlinear PDEs. Extending this framework, \cite{bank2017hedging} addresses a more general tracking problem, which encompasses the temporary price impact hedging issue, within a Bachelier model. The authors provide a closed-form solution for the strategy that involves conditional expectations of a time integral over the optimal frictionless hedging strategy. In the context of proportional transaction costs, \cite{soner1995there} shows that in a Black-Scholes market, the minimal superhedging price for a European call option is equal to the current spot price of the underlying asset. This result implies that super-replication strategies are of limited practical interest in one-dimensional cases due to their prohibitive cost and become numerically intractable in higher-dimensional settings. 

Mean-variance hedging has been another area of focus in the literature. \cite{hubalek2006variance} tackles the mean-variance hedging problem for vanilla options on Lévy processes, offering a semi-explicit solution. Their approach is further developed in \cite{goutte2014variance}, whose authors present an algorithm for the mean variance hedging of vanilla options. In a frictionless framework, \cite{jeanblanc2012mean} uses BSDEs to derive the optimal hedge for the mean-variance problem. However, finding the optimal hedge remains challenging in practical applications. 

Machine learning techniques have also been applied to hedging problems. \cite{hutchinson1994nonparametric} employs neural networks to price and hedge European options. Building on this, \cite{buehler2019deep} extends the methodology to exotic derivatives by approximating the optimal hedging strategy using deep learning. Despite its potential, this approach generally finds only local minima rather than global ones, and the computational cost of training can be significant. 

An alternative method involves the use of truncated signatures from the rough path theory \cite{lyons1998differential} to linearize the general hedging problem. The motivation for using signatures in the context of path-dependent financial optimization problems stems from the well-known fact that linear functionals acting on the signature form a unital algebra of real-valued functions on paths that separates points \cite{lyons2004differential}; thus, by the classical Stone-Weierstrass theorem, such linear functionals of signatures are universal approximators for functions on (compact sets of unparameterized) paths. 
Hence, signature coefficients form an ideal set of features for machine learning applications dealing with sequential data \cite{fermanian2023new, salvi2023structure, cass2024lecture}. In fact, signature methods have become very popular in recent years in quantitative finance \cite{arribas2020sigsdes, salvi2021higher, horvath2023optimal, pannier2024path} as well as in the broader data science communities, and have been applied in a variety of contexts including deep learning
\cite{kidger2019deep, morrill2021neural, cirone2023neural, hoglund2023neural, cirone2024theoretical, issa2024non, barancikova2024sigdiffusions}, kernel methods \cite{salvi2021signature, lemercier2021distribution, lemercier2021siggpde, manten2024signature}, cybersecurity \cite{cochrane2021sk}, and computational neuroscience \cite{holberg2024exact}.
Going back to the hedging problem, \cite{lyons2020non} proposes to use this approach, but it requires several stringent assumptions: 1) payoffs are approximated by signature payoffs; 2) the method is limited to low-dimensional market dynamics due to the exponential growth of signature features; 3) the theory is developed for one-dimensional market dynamics and assumes that the augmented process with its quadratic covariation (i.e., lead-lag transformation) forms a geometric rough path—a property that holds almost surely for semimartingales as shown in \cite{flint2016discretely}, but not for general rough paths; 4) it is restricted to polynomial or exponential hedging losses.

\paragraph{Contributions} In this paper, we utilize the well-established functional-analytic framework of operator-valued kernels together with untruncated signature kernels \cite{salvi2021signature} to provide a scalable and provably convergent signature-based algorithm for a broad class of path-dependent hedging problems that resolves all the limitations mentioned above in the previous work and is applicable to high-dimensional financial data streams. We impose minimal assumptions on the market dynamics by modelling them as general geometric rough paths. Our approach is thus entirely model-free. We present a novel representer theorem that guarantees the existence and uniqueness of a global minimum, and provide an analytic solution to a broad class of hedging problems with highly general loss functions. 
Similar to deep hedging, but in a more rigorous manner, our method considers not only the prices of hedging instruments, but can also incorporate features captured by the operator-valued kernel. 
These features can include additional information like trading signals, news analytics, or past hedging decisions—quantitative data that a human trader might utilize—in line with true machine learning practices. 

\paragraph{Outline of the paper}

In Section \ref{sec:quadratic_example}, we summarize the theoretical foundations and numerical implementation of our method in the context of a simple quadratic hedging problem which we use as a running example throughout the paper. In Section \ref{sect:signatures}, we introduce signature kernels within our framework and explain how this class of kernels provide the necessary guarantees to solve the hedging problem. In Section \ref{sec:general_theo}, we develop a general and flexible theory for kernel hedging, going well beyond the quadratic case. Finally, in Section \ref{sec:experiments}, we present simple experimental results that validate our theoretical findings.

\section{Motivating Example - Quadratic Hedging}\label{sec:quadratic_example}

For demonstration purposes, we summarise the key ideas of our method for solving a quadratic hedging problem.
We assume familiarity of the reader with the theory of rough paths; a brief primer on the subject is given in Appendix \ref{app:RP_Primer}. We refer to \cite{friz2020course, lyons2002system} for an in-depth treatment of the topic.

\medskip


Let $V = \R^d$ be the space in which the market prices $X$ take values and fix a temporal horizon $T > 0$. 
Since the profit-and-loss (P\&L) of a trading strategy is given by an integral against $X$ we need to fix, a priori, a good notion of integration. 
To this end, we assume the prices are directly given as $\alpha$-H\"older rough paths.
In particular, for some initial value $x \in V$, we assume $\mathbf{X} \in \mathscr{C}^{\alpha}_x([0,T], V)$ with first-level increments $\mathbf{X}^0 = X$ matching the observed prices $X$, and where for any $t \in [0,T], ~ ~ \mathscr{C}^{\alpha}_x([0,t], V) =: \Omega^{\alpha}_{x; t} = \Omega^{\alpha}_{x; t}(V)$ denotes the space of $\alpha$-H\"older rough paths starting in $x$.
Define the set of market histories as the following Polish space
$$
\Lambda^\alpha_x = \Lambda^\alpha_x(V) := \bigcup_{t \in [0,T]} \Omega_{x;t}^\alpha(V).
$$

Consider then fixed a probability measure $\mathbb{Q}$ on the measurable space $(\Omega^\alpha_{x;T}, \mathcal{B}(\Omega^\alpha_{x;T}))$ where $\mathcal{B}(\Omega^\alpha_{x;T})$ is the Borel $\sigma$-algebra and let $\mathbb{F} = \{\mathcal{F}_t : t \in [0,T]\}$ be the filtration generated by the process of (lifted) market prices $\mathbf{X}$. 
We will work on the filtered space $(\Omega^\alpha_{x;T}, \mathcal{B}(\Omega^\alpha_{x;T}), \mathbb{F}, \mathbb{Q})$.
\\

We are interested in solving the following \emph{quadratic hedging problem}
\begin{equation}\label{eqn:hedging}
    \inf_{f \in \mathfrak{F}} \E^x_\bQ\left[\left( \pi(\mathbf{X}|_{[0,T]})  - \pi_0 - \int_0^T \left\langle f(\mathbf{X}|_{[0,t]}), \D \mathbf{X}_t \right\rangle_{V}\right)^2 \right],
\end{equation}
where $\mathfrak{F} = \{f : \Lambda^\alpha_x \to V \}$ is a family of functionals generating predictable, self-financing strategies $t \mapsto f(\mathbf{X}|_{[0,t]})$ integrable against $\D \bfX$, $\pi : \Omega_{x;T}^\alpha \to \bR$ is a (possibly path-dependent) payoff and $\pi_0\in\bR$ the initial capital, for some starting point $x \in V$. To ease notation we remove the dependence on $x$. 
\\

We will consider our family $\mathfrak{F}$ to be a Reproducing Kernel Hilbert Space (RKHS).
To define these spaces in our vector-valued setting we need the notion of \emph{operator-valued kernel}:
\begin{definition}(Operator Valued Kernel)
    Given a set $\mathcal{Z}$ and a real Hilbert space $\mathcal{Y}$, an $\mathcal{L}(\mathcal{Y})$-valued kernel is a map $K: \mathcal{Z} \times   \mathcal{Z} \to \mathcal{L}(\mathcal{Y})$ such that
    \begin{enumerate}[label=\roman*.]
        \item $\forall x,z \in \mathcal{Z}$ it holds that $K(x,z) = K(z,x)^T$.
        \item For every $N \in \bN$ and $\{(z_i,y_i)\}_{i=1,\dots,N}\subseteq \mathcal{Z} \times \mathcal{Y}$ the matrix with entries $\sprod{K(z_i,z_j)y_i}{y_j}_{\mathcal{Y}}$ is semi-positive definite.
    \end{enumerate}
\end{definition}
A classical result  \cite{carmeli2006}  associates to such a kernel $K$ a particular Hilbert space $\cH_{K} \subseteq \{\mathcal{Z} \to \mathcal{Y}\}$, its reproducting kernel Hilbert space (RKHS), characterised by the following properties:
\begin{enumerate}[label=\roman*.]
    \item $\forall z,y \in \mathcal{Z} \times \mathcal{Y}$ it holds $K(z,\cdot)y \in \cH_{K}$.
    \item $\forall z,y \in \mathcal{Z} \times \mathcal{Y}$ and $\forall \phi \in \cH_{K}$ it holds $\sprod{\phi}{K(z,\cdot)y}_{\cH_{K}} = \sprod{\phi(z)}{y}_{\mathcal{Y}}$.
    \item $\cH_{K}$ is the closure, under $\sprod{\cdot}{\cdot}_{\cH_{K}}$ of $\text{Span}\left\{ K(z,\cdot)y ~|~  z,y \in \mathcal{Z} \times \mathcal{Y} \right\}$.
\end{enumerate}


Consider then an operator-valued kernel 
$K : \Lambda^\alpha \times \Lambda^\alpha \to \cL(V)$ and denote by $\cH_{K}$ the associated RKHS. The elements of $\cH_{K}$ can be understood as maps $\Lambda^\alpha \to V^* \simeq V$ via the defining reproducing property \emph{i.e.} for $(f, \bfX, v) \in \cH_{K} \times \Lambda^\alpha \times V$ 
\begin{equation}\label{eqn:rep_property}
    \sprod{f(\bfX)}{v}_V = \sprod{f}{K(\bfX,\cdot)v}_{\cH_{K}}.
\end{equation}
These are exactly the types of families $\mathfrak{F}$ of functionals which we are going to consider in this work.

\begin{remark}
    In practice we might want $\mathfrak{F}$ to be a general class of functionals, possibly lacking a Hilbert structure, such as the space of \emph{all} admissible strategies. 
    To leverage our results it is then necessary to find an RKHS \say{densely embedded} in the target family $\mathfrak{F}$. 
    Examples are signature kernels RKHS, as discussed in \ref{subsub:sig_properties} below.
\end{remark}

\begin{example}
    One way to construct an \emph{operator}-valued kernel $K$ is to start from a real-valued kernel $\kappa:  \Lambda^\alpha \times \Lambda^\alpha \to \bR$ and a \emph{symmetric} positive definite matrix $A: V \to V$ ($V = \bR^d$), then defining $K$ as
    $$K(\bfX, \bfY) := \kappa(\bfX, \bfY)A, \quad \forall \bfX,\bfY \in \Lambda^\alpha(V).$$
    Then if $\cH_{\kappa}$ is the RKHS of $\kappa$ one has 
    $$ \cH_{K} \xhookrightarrow{} \cH_{\kappa} \otimes V_{A},$$
    where $V_{A}$ is the Hilbert space $(V, \sprod{\cdot}{A\cdot}_V)$ and $\otimes$ is the tensor product.
    Note in fact that since $\cH_{K}$ is the closure of $\text{Span}\left\{ K(\mathbf{Z},\cdot) y ~|~  \mathbf{Z}\in \Lambda^\alpha,  y \in  V \right\}$ and since one has 
    \begin{align*}
    \sprod{K(\bfX,\cdot)v}{K(\bfY,\cdot)w}_{\cH_{K}} 
    & = \sprod{K(\bfX, \bfY)v}{w}_V 
    = \kappa(\bfX, \bfY)\sprod{Av}{w}_{V}
    \\ 
    & = \sprod{\kappa(\bfX,\cdot) \otimes v}{\kappa(\bfY,\cdot) \otimes w}_{\cH_{\kappa} \otimes V_{{A}}} 
    \end{align*}
    the inclusion is the isometric embedding extending  $K(\bfX,\cdot)v \mapsto \kappa(\bfX,\cdot) \otimes v$. An example of a real-valued kernel is the \emph{signature} kernel $K_{\text{sig}}$ which we will study in more detail in Section \ref{sect:signatures}.
\end{example}


\subsection{Feature Map $\Phi_\bfX$}
To tackle the optimisation problem (\ref{eqn:hedging}), we will be interested in evaluating the following integral
\begin{equation}\label{eqn:integral}
    \int_0^T \sprod{ f(\bfX|_{[0,t]}) }{ \D \bfX_t }_{V}, \quad f \in \cH_K.
\end{equation}
In this section we present a general technique to deal with expressions of this form, recasting (\ref{eqn:integral}) as a kernel evaluation with respect to a new kernel $\mathcal{K}_{\Phi}$. Formally the argument proceeds as follows: 
\begin{align}\label{eqn:main_obstacle_1}
    \int_0^T \sprod{ f(\bfX|_{[0,t]}) }{ \D \bfX_t }_{V} & =
    \int_0^T \sprod{ f }{ ~K(\bfX|_{[0,t]},\cdot)\D \bfX_t }_{\cH_{K}} 
    \\\label{eqn:main_obstacle_2}
    &=   \sprod{ f }{ \int_0^T K(\bfX|_{[0,t]},\cdot)\D \bfX_t }_{\cH_{K}},
\end{align}
where the first equality follows from the reproducing property (\ref{eqn:rep_property}), and the second follows from linearity of the inner product. Defining 
$\Phi : \Omega^\alpha_{T} \to \cH_{K}$ as 
$$\Phi(\bfX) \equiv \Phi_\bfX := \int_0^T K(\bfX|_{[0,t]},\cdot)\D \bfX_t \in \cH_{K}$$
we obtain the equality
\begin{equation}\label{eqn:Phi_trick}
    \int_0^T \sprod{ f(\bfX|_{[0,t]}) }{ \D \bfX_t }_{V} = \sprod{f}{\Phi_\bfX}_{\cH_{K}}.
\end{equation}
We can think of $\Phi$ as a feature map with corresponding \emph{real-valued} kernel
\begin{equation}
    \begin{aligned}
         & \mathcal{K}_{\Phi} : \Omega^\alpha_T \times \Omega^\alpha_T \to \bR 
         \\
         & \mathcal{K}_{\Phi}(\bfX,\bfY) = \sprod{\Phi_\bfX}{\Phi_{\bfY}}_{\cH_{K}}
    \end{aligned}
\end{equation}
and associated RKHS $\cH_{\Phi}$. 


\begin{remark}
    Since $\cH_{\Phi}$ is the completion of $Span\{\Phi_\bfX : \bfX \in \Omega^\alpha_T\} \subseteq \cH_{K}$ under the scalar product $\sprod{\cdot}{\cdot}_{\cH_{K}}$ one has the natural embedding
    $\cH_{\Phi} \xhookrightarrow{i} \cH_{K}$, and through this $\cH_{K} = \cH_{\Phi} \oplus \cH_{\Phi}^{\perp}$.
    Moreover, the evaluations
    \[
    ev_\bfX^{K}(f) = \{ v \mapsto \sprod{f}{K(\bfX,\cdot)v}_{\cH_{K}}\}\in V^* \simeq V 
    \text{ and }
    ev_\bfX^{\Phi}(f) = \sprod{f}{\Phi_\bfX}_{\cH_{K}} \in \bR
    \]
    on these two spaces induce two different inclusions
    \[
    \cH_{K} \xhookrightarrow{\iota_{K}} \{ {\Lambda^\alpha} \to V \} \quad \text{and} \quad
    \cH_{\Phi} \xhookrightarrow{\iota_{\Phi}} \{ {\Omega^\alpha_T} \to \bR \}.
    \]
\end{remark}

\subsection{Recasting the Problem}
Given (\ref{eqn:Phi_trick}) the quadratic hedging problem (\ref{eqn:hedging}) becomes a traditional quadratic kernel minimization of form:
\begin{equation}
    \inf_{f \in {\cH_{K}}} \E^x_\bQ\left[\left( \pi(\bfX|_{[0,T]})  - \pi_0 -  \sprod{f}{\Phi_\bfX}_{\cH_{K}} \right)^2 \right].
\end{equation}

In practice the minimizer is approximated by replacing $\bQ$ with an empirical measure. In this finite-data setting, a regularization term $\epsilon \norm{f}^2_{\cH_{K}}$ is commonly added to ensure well-posedness of the problem with the assurance that as $\epsilon \to 0$ the regularized solution converges to a (possibly non-unique) un-regularised minimizer.
With the addition of such a regularisation term, the problem reads 
\begin{equation}\label{eqn:hedging_Phi}
    \inf_{f \in {\cH_{\Phi}}} \E^x_\bQ\left[\left( \pi(\bfX|_{[0,T]})  - \pi_0 -  \sprod{f}{\Phi_\bfX}_{\cH_{\Phi}} \right)^2 \right] \left( + \epsilon \norm{f}^2_{\cH_{\Phi}} \right),
\end{equation}
since one has $\cH_{K} = \cH_{\Phi} \oplus \cH_{\Phi}^{\perp}$ and $f$ enters only trough the dot-product with elements of $\cH_{\Phi}$. As a corollary of \cite[Theorem 2]{regKernels} we have the following result:
\begin{theorem}\label{thm:repr_theorem_body}
    Assume $\cX \subset \Omega^\alpha_T$ to be locally compact and second countable such that $\mathcal{K}_{\Phi}: \cX \times \cX \to \bR$ is measurable. Let  $\mathbb{Q}$ be a measure supported on $\cX$. If
    \[
    \int_{\cX} \mathcal{K}_{\Phi}(\bfX, \bfX) d\bQ(X) < +\infty
    \quad
    \int_{\cX} \pi(\bfX)^2 d\bQ(\bfX) < +\infty
    \]
    then for any $\lambda > 0$ one has 
    \begin{equation*}
        \begin{aligned}
            f^* \in & \arg\min_{f \in \cH_{\Phi}} 
        \left\{
         \E^x_\bQ\left[\left( \pi(\bfX|_{[0,T]})  - \pi_0 - \int_0^T \left\langle f(\bfX|_{[0,t]}), \D \bfX_t \right\rangle_{\bR^d}\right)^2 \right] 
        + \frac{\lambda}{2} \norm{f}_{\cH_{\Phi}}
        \right\}
        \end{aligned}
    \end{equation*}
    if and only if there is $\alpha^* \in L^2_{\bQ}(\cX)$ such that 
   \begin{equation}\label{eqn:quad_hedge_alpha}
        \left( \frac{1}{2}Id + \frac{1}{\lambda} \Xi_{\bQ}^{\Phi} \right) (\alpha^*) (\bfX) = \pi_0 - \pi(\bfX),
    \end{equation}
    where $\Xi_{\bQ}^{\Phi}: L^2_{\bQ} \to L^2_{\bQ}$ is defined as
    \[
    \Xi_{\bQ}^{\Phi}(\alpha)(\bfX) = \E_{\bfY \sim \bQ} [ \alpha(\bfY)\mathcal{K}_{\Phi}(\bfY,\bfX) ]
    = 
    \sprod{\E_{\bfY \sim \bQ} [ \alpha(\bfY)\Phi_\bfY]}{\Phi_\bfX}_{\cH_{K}}.
    \]
    If such an $\alpha^*$ exists moreover
    \begin{equation}
        f^* =  - \frac{1}{\lambda} \E_{\bQ}[\alpha^*(\bfX) \Phi_{\bfX}] \in \cH_{\Phi} \subseteq \cH_{K}
    \end{equation}
\end{theorem}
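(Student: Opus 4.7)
The plan is to identify (\ref{eqn:hedging_Phi}) with the abstract regularised squared-loss kernel learning problem of \cite[Theorem 2]{regKernels} and then extract the advertised first-order characterisation. The two integrability assumptions say respectively that $\bfX \mapsto \Phi_{\bfX}$ is Bochner-square-integrable into $\cH_{\Phi}$ (indeed $\norm{\Phi_{\bfX}}^2_{\cH_{\Phi}} = \mathcal{K}_{\Phi}(\bfX,\bfX)$) and that $\pi - \pi_0 \in L^2_{\bQ}(\cX)$; together they make the functional
\[
J(f) := \E_{\bQ}\!\left[(\pi(\bfX) - \pi_0 - \sprod{f}{\Phi_{\bfX}}_{\cH_{\Phi}})^2\right] + \frac{\lambda}{2}\norm{f}^2_{\cH_{\Phi}}
\]
finite, continuous, strictly convex, and coercive on $\cH_{\Phi}$. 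Hence a unique minimiser exists and is characterised by the vanishing of its G\^ateaux derivative.

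Computing this derivative in an arbitrary direction $g \in \cH_{\Phi}$ and exchanging expectation with the inner product (legitimate by Bochner integrability) gives the first-order condition
\[
\lambda f^* = 2\, \E_{\bQ}\!\left[\bigl(\pi(\bfX) - \pi_0 - \sprod{f^*}{\Phi_{\bfX}}_{\cH_{\Phi}}\bigr) \Phi_{\bfX}\right].
\]
Introducing the scalar dual variable $\alpha^*(\bfX) := -2\bigl(\pi(\bfX) - \pi_0 - \sprod{f^*}{\Phi_{\bfX}}_{\cH_{\Phi}}\bigr)$ immediately yields the announced formula $f^* = -\tfrac{1}{\lambda}\,\E_{\bQ}[\alpha^*(\bfX)\,\Phi_{\bfX}]$, and $\alpha^* \in L^2_{\bQ}(\cX)$ by Cauchy--Schwarz together with $\E_{\bQ}[\mathcal{K}_{\Phi}(\bfX,\bfX)] < \infty$. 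Substituting the formula back into the definition of $\alpha^*$ and using $\sprod{f^*}{\Phi_{\bfX}}_{\cH_{\Phi}} = -\tfrac{1}{\lambda}\,\Xi_{\bQ}^{\Phi}(\alpha^*)(\bfX)$ produces, after dividing by $2$, exactly the integral equation $\bigl(\tfrac{1}{2}Id + \tfrac{1}{\lambda}\Xi_{\bQ}^{\Phi}\bigr)(\alpha^*) = \pi_0 - \pi$, proving the ``only if'' direction. The converse follows from strict convexity: given any $\alpha^* \in L^2_{\bQ}(\cX)$ solving the integral equation, the $f^*$ defined by the stated formula satisfies the first-order condition and is therefore \emph{the} unique minimiser.

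The main technical obstacle I anticipate is functional-analytic bookkeeping rather than any conceptual difficulty. One must show that $\Xi_{\bQ}^{\Phi}$ is a well-defined bounded self-adjoint operator on $L^2_{\bQ}(\cX)$, which follows from the standard positivity-of-kernel estimate $|\mathcal{K}_{\Phi}(\bfX,\bfY)|^2 \le \mathcal{K}_{\Phi}(\bfX,\bfX) \mathcal{K}_{\Phi}(\bfY,\bfY)$ together with Fubini. One must also verify that the Bochner integral $\E_{\bQ}[\alpha^*(\bfX) \Phi_{\bfX}]$ genuinely lands in the subspace $\cH_{\Phi} \subset \cH_{K}$; this uses closedness of $\cH_{\Phi}$ and the fact that the integrand is $\cH_{\Phi}$-valued. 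Measurability of $\bfX \mapsto \Phi_{\bfX}$ into the separable Hilbert space $\cH_{\Phi}$ is needed here: local compactness and second countability of $\cX$ together with measurability of $\mathcal{K}_{\Phi}$ supply the weak measurability, and Pettis' theorem then delivers Bochner measurability.
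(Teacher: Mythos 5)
Your proof is correct, and it takes a genuinely different route from the paper. The paper's proof is a one-line invocation of \cite[Theorem 2]{regKernels}: it simply instantiates that abstract representer theorem with $Y = \bR$, $d\rho = d\bQ \otimes \delta_{\pi(\bfX)-\pi_0}$, $V(x,y)=(y-x)^2$, and the evaluation functional $ev^\Phi_\bfX$. You instead re-derive the characterisation from scratch: strict convexity plus coercivity of the regularised quadratic objective give existence and uniqueness of the minimiser, the G\^ateaux derivative yields the normal equation $\lambda f^* = 2\,\E_\bQ\big[(\pi-\pi_0-\sprod{f^*}{\Phi_\bfX})\Phi_\bfX\big]$, and naming the scalar residual $\alpha^*$ and back-substituting produces both the representer formula $f^* = -\tfrac{1}{\lambda}\E_\bQ[\alpha^*\Phi_\bfX]$ and the operator equation $\big(\tfrac{1}{2}Id + \tfrac{1}{\lambda}\Xi^\Phi_\bQ\big)\alpha^* = \pi_0 - \pi$. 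What your route buys is self-containment and transparency for the quadratic case, and in particular your signs come out matching the theorem statement exactly (the paper's own proof text appears to drop a minus sign in the intermediate line $\tfrac{1}{2}\alpha^*=\tfrac{1}{\lambda}\Xi^\Phi_\bQ(\alpha^*)+\pi_0-\pi$, which as written does not agree with Eqn.~(\ref{eqn:quad_hedge_alpha})). What the paper's route buys is generality: the cited theorem handles arbitrary convex $p$-loss functions, which is precisely what is needed later in Theorem~\ref{thm:repr_better}, so the authors prefer to keep the abstraction uniform. Two small caveats worth spelling out if you expand this: (i)~the regulariser in the displayed objective must be read as $\tfrac{\lambda}{2}\norm{f}^2_{\cH_\Phi}$ rather than $\tfrac{\lambda}{2}\norm{f}_{\cH_\Phi}$ (as in Eqn.~(\ref{eqn:hedging_Phi})) for your variational computation to be literal, which you implicitly assumed; and (ii)~weak measurability of $\bfX\mapsto\Phi_\bfX$ follows because $\sprod{g}{\Phi_\bfX}$ is a pointwise limit of finite linear combinations of the measurable maps $\bfX\mapsto\mathcal{K}_\Phi(\bfY,\bfX)$, and separability of $\cH_\Phi$ (from second countability of $\cX$) is what lets Pettis upgrade this to Bochner measurability, exactly as you indicate.
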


\begin{proof}
    This result is a simple corollary of \cite[Theorem 2]{regKernels} with 
    $Y = \bR$, 
    $$d\rho(\bfX, y) = d\bQ(\bfX) \otimes \delta^y_{\pi(\bfX) - \pi_0}, \quad V(x, y) := (y - x)^2, $$
    $\mathcal{B} = \{0\}$ and $f \in \cH_{\Phi}$ so that 
    \(
    ev_{\bfX}^\Phi (f) = \sprod{f}{\Phi_\bfX}_{\cH_\Phi}
    \). The theorem then states that a minimizer $f^*$ exists if and only if there is $\alpha^* \in L^2_\bQ(\mathcal{X})$ such that
    \[
    \alpha^*(\bfX) = 2 \left(\frac{1}{2(\frac{\lambda}{2})} \E_{\bfY \sim \bQ} [ \alpha^*(\bfY)\mathcal{K}_{\Phi}(\bfY,\bfX) ]  - \pi(\bfX) + \pi_0 \right)
    \]
    \emph{i.e.} if and only if
    \[
    \frac{1}{2}\alpha^*(\bfX) = \frac{1}{\lambda}\Xi_{\bQ}^{\Phi}(\alpha^*)(\bfX) + \pi_0 - \pi(\bfX).
    \]
\end{proof}

\begin{remark}
Theorem \ref{thm:repr_theorem_body} poses the stringent condition of \emph{local compactness} on $\cX$, making the result not directly applicable even to the case of Brownian paths. Practically though, as mentioned before, $\mathbb{Q}$ is taken as an empirical measure of finitely many market trajectories, acting as collocation points, which is obviously compactly supported. Alternatively, the problem can be alleviated by, as it is customary in the literature, looking for approximate solutions on compact sets having almost full measure (see Section \ref{subsub:sig_properties}).
\end{remark}

\begin{corollary}
    For any path $\bfX \in \Lambda^\alpha(\bR^d)$, one has the following representation for the portfolio weights $f^*(\bfX|_{[0,s]})\in \bR^d$ at time $s \in [0,T]$:
    \begin{equation}
        \begin{aligned}
            [f^*(\bfX|_{[0,s]})]_i &= \sprod{f^*(\bfX|_{[0,s]})}{e_i}_{\bR^d}
            = \sprod{f^*}{K(\bfX|_{[0,s]},\cdot)e_i}_{\cH_{K}}
            \\
            & = - \frac{1}{\lambda} \E_{\bfY \sim \bQ}[\alpha^*(\bfY) \sprod{\Phi_{\bfY}}{K(\bfX|_{[0,s]},\cdot)e_i}_{\cH_{K}}]
            \\
            & = - \frac{1}{\lambda} \E_{\bfY \sim \bQ}[\alpha^*(\bfY) 
            \int_{t=0}^T \sprod{K(\bfY|_{[0,t]},\cdot)\D\bfY_t}{K(\bfX|_{[0,s]},\cdot)e_i}_{\cH_{K}}]
            \\
            & = - \frac{1}{\lambda} \E_{\bfY \sim \bQ}[\alpha^*(\bfY) 
            \int_{t=0}^T \sprod{K(\bfX|_{[0,s]}, \bfY|_{[0,t]})e_i}{\D \bfY_s}_{\bR^d}]
        \end{aligned}
    \end{equation}
\end{corollary}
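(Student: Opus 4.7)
The plan is to unwind the representation of $f^\ast$ provided by Theorem \ref{thm:repr_theorem_body} through repeated application of the reproducing property \eqref{eqn:rep_property}, chaining together five essentially mechanical equalities. The target identity is a single chain of rewrites rather than a genuinely new theorem, so the proof should be a careful bookkeeping of inner products and integrals.

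First, I would write $[f^\ast(\bfX|_{[0,s]})]_i = \sprod{f^\ast(\bfX|_{[0,s]})}{e_i}_{\bR^d}$ and immediately invoke the reproducing property \eqref{eqn:rep_property} (with $v = e_i$ and path $\bfX|_{[0,s]} \in \Lambda^\alpha$) to rewrite this as $\sprod{f^\ast}{K(\bfX|_{[0,s]},\cdot)e_i}_{\cH_K}$. Next, I would substitute the explicit form $f^\ast = -\frac{1}{\lambda}\E_{\bfY\sim\bQ}[\alpha^\ast(\bfY)\Phi_\bfY]$ supplied by Theorem \ref{thm:repr_theorem_body}, and pull the expectation outside the inner product using linearity and continuity of $\sprod{\cdot}{K(\bfX|_{[0,s]},\cdot)e_i}_{\cH_K}$ as a bounded linear functional on $\cH_K$.

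The third step is to substitute the definition $\Phi_\bfY = \int_0^T K(\bfY|_{[0,t]},\cdot)\D\bfY_t$ and commute this integral with the inner product, yielding a rough integral of $\sprod{K(\bfY|_{[0,t]},\cdot)\D\bfY_t}{K(\bfX|_{[0,s]},\cdot)e_i}_{\cH_K}$. Finally, a second application of the reproducing property (this time collapsing the inner product of two kernel sections back down to an evaluation in $V$) converts the integrand to $\sprod{K(\bfX|_{[0,s]},\bfY|_{[0,t]})e_i}{\D\bfY_t}_{\bR^d}$, which is the claimed formula.

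The main obstacle is justifying the two interchanges of inner product and integration: once for the Bochner/$\cH_K$-valued rough integral defining $\Phi_\bfY$, and once for the $\bQ$-expectation. Both exchanges are permissible because $\sprod{\cdot}{K(\bfX|_{[0,s]},\cdot)e_i}_{\cH_K}$ is a continuous linear functional of norm $\|K(\bfX|_{[0,s]},\cdot)e_i\|_{\cH_K} = \sprod{K(\bfX|_{[0,s]},\bfX|_{[0,s]})e_i}{e_i}_{V}^{1/2} < \infty$, and the integrability hypothesis $\int_{\cX}\mathcal{K}_\Phi(\bfX,\bfX)d\bQ(\bfX) < \infty$ in Theorem \ref{thm:repr_theorem_body} gives $\Phi_\bfY \in L^1_\bQ(\cH_K)$, so that the expectation over $\bfY$ also commutes with the bounded functional. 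Once these measurability/integrability checks are in place, the remainder of the proof is a direct chain of equalities.
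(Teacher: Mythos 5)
Your proof is correct and is exactly the unwinding the paper intends (the corollary is stated without a separate proof precisely because it is this mechanical chain): coordinate extraction, reproducing property, substitution of $f^\ast = -\tfrac{1}{\lambda}\E_\bQ[\alpha^\ast\Phi]$, substitution of the definition of $\Phi_\bfY$, and one final application of the reproducing property together with the symmetry $K(x,z)=K(z,x)^T$. Two small points: the integrability you invoke should be sharpened to $\alpha^\ast\in L^2_\bQ$ and $\Phi\in L^2_\bQ(\cH_K)$ (the latter from $\int\mathcal{K}_\Phi(\bfX,\bfX)\,d\bQ<\infty$), whence $\alpha^\ast(\cdot)\Phi_\cdot\in L^1_\bQ(\cH_K)$ by Cauchy--Schwarz, which is what lets you pull the bounded functional through the Bochner expectation; and your final integrand $\sprod{K(\bfX|_{[0,s]},\bfY|_{[0,t]})e_i}{\D\bfY_t}_{\bR^d}$ is in fact the correct one --- the paper's displayed $\D\bfY_s$ in the last line is a typo.
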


\begin{remark}
    When the measure $\bQ$ is the empirical measure of a set $\bX = \{\bfX_1, \dots, \bfX_n\}$ then $L^2_{\bQ}(\cX)$ is identified with $\bR^n$ via $\delta_{\bfX_i} \mapsto e_i$.
    Then $\Xi^{\Phi}_{\bQ}$ is identified with the Gram matrix of $\mathcal{K}_{\Phi}$, rescaled by $\frac{1}{n}$. Defining the map $\Gamma_{\bQ} : \Lambda^\alpha \to \bR^{d \times N}$ by 
    \[
    [ \Gamma_{\bQ}(\bfY)]_{i,j} := \int_{t=0}^T \sprod{K(\bfY, \bfX_j|_{[0,t]})e_i}{\D(\bfX_j)_t}_{\bR^d}
    = [ ev^{K}_{\bfY}(\Phi_{\bfX_j}) ]_i
    \]
    then the portfolio has the explicit weights
    \begin{equation}\label{eqn:gram_fit}
    \bR^d \ni  f^*(\bfY|_{[0,s]}) =  \frac{1}{n \lambda}\Gamma_{\bQ}(\bfY|_{[0,s]})\left( \frac{1}{2}Id_n + \frac{1}{n\lambda} \mathcal{K}_{\Phi}(\bX,\bX) \right)^{\dag} [\pi(\bX) - \pi_0\boldsymbol{1}_n]
    \end{equation}
\end{remark}

\subsection{Well Posedness}



To make the above arguments rigorous, we need to justify that the integrals in (\ref{eqn:main_obstacle_1}) and (\ref{eqn:main_obstacle_2}) are well-defined. The novelty of the following result is the proof that if the kernel functional is controlled by $\bfX$, then any element of the corresponding RKHS is also controlled as-well.

\begin{proposition}[Rough integral]\label{prop:Phi_rough_all}
    Let $\alpha \in (0,1)$ and $N = \lfloor{\frac{1}{\alpha}}\rfloor$.
    Assume $\bfX \in \mathscr{C}^{\alpha}([0,T];\bR^d)$ is an $\alpha$-H\"older rough path 
    $\mathbf{X} = (1, X^1,\cdots,X^N): \Delta_{0,T} \to T^{(N)}(\bR^d)$.
    Assume moreover the existence of a path 
    \[
    \boldsymbol{K}(X) = (K^0(X)_s, \cdots, K^{N-1}(X)_s) : [0,T] \to \bigoplus\limits_{k=0}^{N-1} \mathcal{L}((\bR^d)^{\otimes k}; \mathcal{L}(\bR^d; \cH_{K})) 
    \]
    {lifting} $K(\bfX|_{[0,s]}, \cdot)$ (\emph{i.e.} $K^0(X)_s = K(X|_{[0,s]}, \cdot)$)
    and such that, for every word $I$ with $|I|\leq N - 1$ one has
    \begin{equation}
        \norm{\boldsymbol{K}(X)_t[e_I] - \boldsymbol{K}(X)_s[\mathbf{X}_{s,t} \otimes e_I]}_{\mathcal{L}(\bR^d; \cH_{K})} \leq C |t-s|^{(N-|I|)\frac{1}{p}}
    \end{equation}
    \begin{equation}
        \norm{\boldsymbol{K}(X)_t[e_I] - \boldsymbol{K}(X)_s[e_I]}_{\mathcal{L}(\bR^d; \cH_{K})} \leq C |t-s|^{\frac{1}{p}}.
    \end{equation}
    Then the integral 
    \[
        \Phi_X := \int_0^T K(\bfX|_{[0,t]}, \cdot) \D \mathbf{X}_t := \lim\limits_{|\mathcal{P}| \to 0} \sum\limits_{[s,t]\in \mathcal{P}}
        \sum_{|I| \leq N-1}\sum_{k =1}^d  \mathbf{X}^{Ik}_{s,t}(\boldsymbol{K}(X)_s[e_{I}])[e_k] \in \cH_{K}
    \]
    is well-defined as a rough integral. Furthermore, for any $f \in \cH_{K}$, $t \mapsto f(\bfX|_{[0,t]})$ is itself integrable against $\D \bfX$ in a rough sense and one has
    \begin{equation}\label{eqn:core_swap_rough}
        \int_0^T \left\langle f(\bfX|_{[0,t]}), \D \mathbf{X}_t \right\rangle_{\bR^d} = \left\langle f, \Phi_\bfX \right\rangle_{\cH_{K}}.
    \end{equation}
\end{proposition}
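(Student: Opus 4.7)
The plan is to apply the Banach-valued sewing lemma to the $\cH_K$-valued germ
\[
\Xi_{s,t} := \sum_{|I|\le N-1}\sum_{k=1}^d \mathbf{X}^{Ik}_{s,t}\bigl(\boldsymbol{K}(X)_s[e_I]\bigr)[e_k],
\]
viewing $\boldsymbol{K}(X)$ as an $\mathcal{L}(\bR^d;\cH_K)$-valued path controlled by $\bfX$. Since $\cH_K$ is a Banach space, the standard sewing argument works verbatim and yields $\Phi_\bfX\in\cH_K$ as soon as the defect estimate $\|\Xi_{s,u}+\Xi_{u,t}-\Xi_{s,t}\|_{\cH_K}\le C|t-s|^{\theta}$ holds for some $\theta>1$.

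To verify the defect estimate I would expand $\mathbf{X}^{Ik}_{s,t}$ by Chen's relation as a sum over splittings $Ik=W_1 W_2$ of products $\mathbf{X}^{W_1}_{s,u}\mathbf{X}^{W_2}_{u,t}$, and then use the first hypothesis on $\boldsymbol{K}(X)$ to replace $\boldsymbol{K}(X)_u[e_I]$ by $\boldsymbol{K}(X)_s[\mathbf{X}_{s,u}\otimes e_I]$ up to a remainder of size $|u-s|^{(N-|I|)/p}$. The algebraic cancellations are identical to those of the classical vector-valued rough-integral proof (cf.~\cite{friz2020course}); the surviving error terms each carry a Hölder exponent strictly greater than $1$ thanks to the choice $N=\lfloor 1/\alpha\rfloor$, which is exactly the statement $(N+1)\alpha>1$. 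Sewing then produces $\Phi_\bfX\in\cH_K$.

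For the swap identity, fix $f\in\cH_K$ and, via the reproducing property (\ref{eqn:rep_property}), define
\[
\mathbf{f}_s^I[e_k] := \bigl\langle f,\,\boldsymbol{K}(X)_s[e_I][e_k]\bigr\rangle_{\cH_K}\in\bR,\qquad |I|\le N-1.
\]
Cauchy--Schwarz transfers both hypothesis bounds to $\mathbf{f}$ with constant $C\|f\|_{\cH_K}$, so the scalar germ $\langle f,\Xi_{s,t}\rangle_{\cH_K}$ inherits the sewing assumptions and the classical scalar rough integral $\int_0^T\langle f(\bfX|_{[0,t]}),\D\bfX_t\rangle_{\bR^d}$ exists. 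The identity (\ref{eqn:core_swap_rough}) then follows from the continuity of the linear functional $\langle f,\cdot\rangle_{\cH_K}$: for any partition $\mathcal{P}$ linearity gives $\langle f,\sum_{[s,t]\in\mathcal{P}}\Xi_{s,t}\rangle_{\cH_K}=\sum_{[s,t]\in\mathcal{P}}\langle f,\Xi_{s,t}\rangle_{\cH_K}$, and the limit $|\mathcal{P}|\to 0$ can be interchanged with the inner product.

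The main obstacle I anticipate is the exponent bookkeeping in the sewing step: one must combine Chen's relation with both hypothesis estimates carefully enough to ensure that every term surviving in $\delta\Xi_{s,u,t}$ genuinely picks up exponent $>1$, which is precisely what pins down the level $N=\lfloor 1/\alpha\rfloor$ of the controlled lift $\boldsymbol{K}(X)$. Everything else---invoking sewing in $\cH_K$, using the reproducing property to inherit regularity, and passing to the limit through a continuous linear functional---is essentially formal once this estimate is established.
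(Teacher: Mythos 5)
Your proposal is correct and takes essentially the same route as the paper: apply the (Banach-valued) sewing lemma to the $\cH_K$-valued germ $\Xi_{s,t}$, define the scalar controlled lift of $f$ by $\mathbf{f}_s^I[e_k] := \langle f, \boldsymbol{K}(X)_s[e_I][e_k]\rangle_{\cH_K}$, transfer the Hölder bounds via Cauchy--Schwarz (with constant $\|f\|_{\cH_K}$), and then interchange the continuous linear functional $\langle f, \cdot\rangle_{\cH_K}$ with the limit of Riemann sums. The paper's proof in Appendix~\ref{app:sect:feature_map} proceeds in exactly these steps, only citing \cite[Sect.~4.5]{friz2020course} for the sewing estimate rather than re-deriving the Chen-relation defect bound as you sketch.
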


\begin{proof}
    See Appendix \ref{app:sect:feature_map}
\end{proof}


\subsubsection{The Kernel $\mathcal{K}_{\Phi}$}

We have remarked above how in the discrete case in which $\bQ$ is the empirical measure of a set $\bX$ the Gram matrix $\mathcal{K}_\Phi(\bX, \bX)$ plays a crucial role in the computation of the optimal strategy. Here we show how, given two rough paths $\bfX, \bfY$, the value of $\mathcal{K}_{\Phi}(\bfX, \bfY)$ can be computed as a double rough integral:

\begin{proposition}\label{prop:double_integral}
    Let $\bfX, \bfY \in \mathscr{C}^{\alpha}([0,T]; \bR^d)$ and let the operator-valued kernel $K$ admit lifts against both $\bfX$ and $\bfY$ in the sense of Proposition \ref{prop:Phi_rough_all}. 
    Then
    \begin{align*}
        \mathcal{K}_{\Phi}(\bfX,\bfY) & = 
        \int_{s=0}^T 
        \sprod{\int_{t=0}^T K(\bfY|_{[0,t]}, \bfX|_{[0,s]}) d\mathbf{Y}_t}{\D \mathbf{\bfX}_s}_{V}
        \\
        & = 
        \int_{t=0}^T 
        \sprod{\int_{s=0}^T K(\bfX|_{[0,s]}, \bfY|_{[0,t]}) \D\mathbf{X}_s}{\D\mathbf{Y}_t}_{V} \in \bR.
    \end{align*}
\end{proposition}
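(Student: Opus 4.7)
The plan is to obtain both identities by applying the swap formula \eqref{eqn:core_swap_rough} of Proposition \ref{prop:Phi_rough_all} twice, together with the symmetry $K(\bfY, \bfX) = K(\bfX, \bfY)^T$ of the operator-valued kernel. Since $K$ is assumed to admit lifts against both $\bfX$ and $\bfY$ in the sense of Proposition \ref{prop:Phi_rough_all}, both $\Phi_\bfX$ and $\Phi_\bfY$ are well-defined elements of $\cH_{K}$, and $\mathcal{K}_{\Phi}(\bfX, \bfY) = \sprod{\Phi_\bfX}{\Phi_\bfY}_{\cH_{K}}$ is therefore an honest scalar.

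The first step is to apply \eqref{eqn:core_swap_rough} with $\bfX$ replaced by $\bfY$ and with $f = \Phi_\bfX \in \cH_{K}$, which yields
\[
\mathcal{K}_{\Phi}(\bfX, \bfY) = \sprod{\Phi_\bfX}{\Phi_\bfY}_{\cH_{K}} = \int_0^T \sprod{\Phi_\bfX(\bfY|_{[0,t]})}{\D \bfY_t}_V.
\]
The outer integral is now exposed, and it only remains to identify the integrand $\Phi_\bfX(\bfY|_{[0,t]}) \in V$. By the reproducing property, for every $v \in V$ one has $\sprod{\Phi_\bfX(\bfY|_{[0,t]})}{v}_V = \sprod{\Phi_\bfX}{K(\bfY|_{[0,t]}, \cdot) v}_{\cH_{K}}$. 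Applying \eqref{eqn:core_swap_rough} once more, this time with $f = K(\bfY|_{[0,t]}, \cdot) v \in \cH_{K}$ integrated against $\bfX$, this evaluates to $\int_0^T \sprod{K(\bfY|_{[0,t]}, \bfX|_{[0,s]}) v}{\D \bfX_s}_V$; invoking the symmetry of $K$ and the arbitrariness of $v$ then gives
\[
\Phi_\bfX(\bfY|_{[0,t]}) = \int_0^T K(\bfX|_{[0,s]}, \bfY|_{[0,t]}) \D \bfX_s,
\]
which, substituted into the previous display, delivers the second identity in the statement. The first identity follows by the same argument after interchanging the roles of $\bfX$ and $\bfY$ (equivalently, by starting from $\sprod{\Phi_\bfY}{\Phi_\bfX}_{\cH_K}$ and using that the inner product is symmetric in the real Hilbert space $\cH_{K}$).

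The only delicate point, and the main obstacle I would expect, is the bookkeeping needed to apply \eqref{eqn:core_swap_rough} with the non-trivial element $f = K(\bfY|_{[0,t]}, \cdot)\, v$: one must verify that this $f$ enjoys a rough lift against $\bfX$ satisfying the hypotheses of Proposition \ref{prop:Phi_rough_all}. This is exactly why the proposition asks that $K$ admit lifts against \emph{both} $\bfX$ and $\bfY$: the lift of $K$ in its first slot against $\bfX$, evaluated at the fixed second argument $\bfY|_{[0,t]}$, furnishes precisely the lift required of $f$. Once this compatibility is recorded, both identities reduce to two successive invocations of the reproducing rough-integral identity, with the kernel symmetry performing the final transposition.
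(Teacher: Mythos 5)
Your proof is correct and follows essentially the same route as the paper's: both first reduce $\mathcal{K}_\Phi(\bfX,\bfY)=\sprod{\Phi_\bfX}{\Phi_\bfY}_{\cH_K}$ to an outer rough integral against $\bfY$ via the swap formula with $f=\Phi_\bfX$, and then identify the integrand $\Phi_\bfX(\bfY|_{[0,t]})\in V$ as the inner rough integral $\int_0^T K(\bfX|_{[0,s]},\bfY|_{[0,t]})\,\D\bfX_s$. The paper does this identification more tersely, simply recognizing the inner integral as $ev^K_{\bfY|_{[0,t]}}[\Phi_\bfX]$, while you unroll it into a second explicit invocation of the swap formula with $f=K(\bfY|_{[0,t]},\cdot)v$; this is a reasonable way to make the paper's ``follows from the properties of $\Phi_\cdot$'' step concrete, and the symmetry $K(\bfY,\bfX)=K(\bfX,\bfY)^T$ is used in the same place in both arguments. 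One small remark: the ``delicate point'' you flag at the end is not actually delicate. Proposition~\ref{prop:Phi_rough_all} already guarantees, once $K$ admits a lift against $\bfX$, that the conclusion holds for \emph{every} $f\in\cH_K$; since $K(\bfY|_{[0,t]},\cdot)v\in\cH_K$ by the RKHS axioms, no separate verification of a lift for this particular $f$ is required. The hypothesis that $K$ admit a lift against $\bfY$ is instead needed for the \emph{first} application (so that $\Phi_\bfY$ exists and the outer integral against $\D\bfY$ makes sense), not for the inner identification.
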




\begin{proof}
    First note that the last equality follows from the properties of $\Phi_{\cdot}$, in fact
    \[
        \sprod{\int_{s=0}^T K(\bfX|_{[0,s]}, \bfY|_{[0,t]}) \D \mathbf{X}_s}{z}_{V}
        =
        \sprod{ev^{K}_{\bfY|_{[0,t]}}[ \Phi_\bfX ] }{z}_{V},
    \]
    hence
    \begin{align*}
        \int_{t=0}^T 
        \sprod{\int_{s=0}^T K(\bfX|_{[0,s]}, \bfY|_{[0,t]}) \D\mathbf{X}_s}{\D\mathbf{Y}_t}_{V}
         = &
         \int_{t=0}^T 
        \sprod{ev^{K}_{\bfY|_{[0,t]}[\Phi_\bfX]}}{\D\mathbf{Y}_t}_{V}
        \\
        = & 
        \sprod{\Phi_\bfX}{\Phi_\bfY} = \mathcal{K}_{\Phi}(\bfX,\bfY).
    \end{align*}

    Since the same argument shows 
    \[
    \int_{s=0}^T 
        \sprod{\int_{t=0}^T K(\bfY|_{[0,t]}, \bfX|_{[0,s]}) d\mathbf{\bfY}_t}{\D \mathbf{X}_s}_{V} = \sprod{\Phi_\bfX}{\Phi_\bfY}
    \]
    the equality holds. 
\end{proof}

\section{Signature Kernels for Hedging}
\label{sect:signatures}

In this section we present a family of Kernels, the \emph{signature kernels} satisfying the above conditions, so that the constructions above are well-defined.  
Recall:

\begin{proposition}[Thm. 3.7 \cite{lyons2007differential}]
Given $\mathbf{X} \in \mathscr{C}^{\alpha}([0,1];\bR^d)$ there exists a unique element $Sig(\mathbf{X})$ of $\Delta \to T((\bR^d))$ extending $\mathbf{X}: \Delta \to T^{\lfloor{\frac{1}{\alpha} \rfloor}}(\bR^d)$, satisfying Chen's relation
\[
    Sig(\mathbf{X})_{s,u} \otimes Sig(\mathbf{X})_{u,t} = Sig(\mathbf{X})_{s,t}
\]
and such that 
\[
\forall I, ~ \exists C_{|I|} \geq 0, ~ \forall s \leq t, ~ |\sprod{Sig(\mathbf{X})_{s,t}}{e_I}| \leq C_{|I|}|t-s|^{|I|\alpha}.
\]
\end{proposition}

Given any $X \in \mathcal{C}^{\alpha}([0,1];\bR^d)$ we denote by $\hat X \in \mathcal{C}^{\alpha}([0,1];\bR^{d+1})$ the \emph{time-augmented} path $\hat X_t := (t, X_t)$.
Then if $X$ can be lifted to a rough path $\mathbf{X} \in \mathscr{C}^{\alpha}([0,1];\bR^d)$, $\hat X$ can be lifted as-well.
 Over \emph{geometric} and \emph{time-augmented} rough paths the signature becomes \emph{injective} (\emph{c.f.} corollary of \cite[Thm. 1.1.]{boedihardjo2015signatureroughpathuniqueness}). 
 As mentioned in the introduction, since signatures of geometric rough paths $\mathbf{X} \in \mathscr{C}^{0, \alpha}_g([0,1]; \bR^d)$ satisfy the \emph{shuffle identity} 
 \[
 \forall I,J, ~  \sprod{Sig(\mathbf{X})}{e_I}\sprod{Sig(\mathbf{X})}{e_J} = \sprod{Sig(\mathbf{X})}{e_I \shuffle e_J},
 \]
 a Stone-Weiestrass argument shows that the signature becomes a \emph{universal} feature map, meaning that the corresponding RKHS is dense in continuous functions over compact subsets of $\mathscr{C}^{0, \alpha}_g([0,1]; \bR^d)$.

\subsection{Lead-Lag and Signature Kernel Hedging}
\label{subsub:sig_properties}

The observations above are particularly useful in the case of semi-martingales: in fact (\emph{c.f.} \cite[Ch. 14]{friz2010multidimensional}) any continuous semi-martingale $X$ can be a.s. canonically lifted to geometric $\frac{1}{2^+}$-rough paths, the lift corresponding to the level-2 enhancement
\[
\mathbf{X}^{\text{Strat}}_{s,t} = (X_t - X_s, \int_s^t (X_r - X_s) \otimes \circ \D X_r).
\]
Thus, any continuous map on a compact subset of sample paths of $\mathbf{X}^{\text{Strat}}_{s,t}$ can be approximated a.s. to arbitrary precision by a linear map on their time-augmented Stratonovich lift $Sig^{\text{Strat}}(\hat{X}) := Sig(\hat{\mathbf{X}}^{\text{Strat}})$.
To do this lift we can associate the kernel
\begin{align}
    K_{sig}^{\text{Strat}}(X|_{0,s},Y|_{0,t}) &= \sprod{Sig^{\text{Strat}}(\hat X)_{0,s    }}{~Sig^{\text{Strat}}(\hat Y)_{0,t}}_{T((V))}.
\end{align}

Continuous semi-martingales can also be lifted to geometric $\frac{1}{2^+}$-rough paths in an It\^o sense as well via the level-2 integral
\[
\mathbf{X}^{\text{It\^o}}_{s,t} = (X_t - X_s, \int_s^t (X_r - X_s) \otimes \D X_r).
\]

Note that the level-2 components of these lifts are related by
\[
\int_s^t (X_r - X_s) \otimes \D X_r = \int_s^t (X_r - X_s) \otimes \circ \D X_r - \frac{1}{2} \int_s^t \D [X, X]_{r},
\]
so that the information about the two lifts can be \say{packaged} together, in fact it can be used to construct a single geometric rough-path (\emph{c.f.} \cite[2.15]{lyons2019nonparametricpricinghedgingexotic})

\begin{definition}(Lead-Lag lift)
    Given a continuous semi-martingale $X: [0,1] \to \R^d$, we define its Lead-Lag lift as the $T^{(2)}(\bR^{2d})$-valued \emph{geometric} $\frac{1}{2^+}$-H\"older rough path
    \begin{equation}
        \mathbf{X}^{LL} := \left(1, \begin{pmatrix} ~ X ~ \\ ~ X ~ \end{pmatrix} , \begin{pmatrix}
            \bX & \bX - \frac{1}{2}[X,X] ~
            \\
            ~\bX  + \frac{1}{2}[X,X] & \bX
        \end{pmatrix} \right),
    \end{equation}
    where 
    \[
    \bX_{s,t} = \int_s^t (X_r - X_s) \otimes \circ \D X_r
    \]
\end{definition}

This leads to the following \say{kernelization} of \citep[Thm. 4.7]{lyons2019nonparametricpricinghedgingexotic}:

\begin{theorem}\label{thm:Immanol}
    Let X be a continuous 
    {
    $\bR^d$-valued
    }
    semi-martingale, $\bQ$ the measure induced by its Lead-Lag lift, and 
    $$
    \mathfrak{F} := \left\{f \in 
    {
    C^0(\mathscr{C}^{\frac{1}{2^+}}_g([0,T]; \bR^{d+1}); \bR^d)
    }~|~ \E_{\mathbb{Q}}\left[ \left(\int_0^T \sprod{f(\hat{\bfX}^{\text{Strat}}|_{[0,t]})}{\D \bfX^{\text{It\^o}}_t} \right)^2\right] < \infty \right\}
    $$
    Let $f^* \in \mathfrak{F}$ be the minimizer of 
    \[
    a := 
    \inf_{f \in \mathfrak{F}} \E_\bQ\left[\left( \pi( \hat{\bfX}^{LL})  - \pi_0 - \int_0^T \left\langle f(\hat{\bfX}^{\text{Strat}}|_{[0,t]}), \D \bfX^{\text{It\^o}}_t \right\rangle\right)^2 \right].
    \]
    Given any $\epsilon > 0$ there exists
    \begin{enumerate}[label=\roman*.]
        \item  a \emph{compact} set $\mathcal{X}_{\epsilon} \subseteq  \mathcal{C}^{\frac{1}{2^+}}([0,T]; \bR^{d})$,
        \item a strategy $f_\epsilon \in \cH_{K_{sig}^{\text{Strat}}}$
    \end{enumerate}
    such that:
    \begin{enumerate}[label=\roman*.]
        \item $\mathbb{Q}[\mathcal{X}_\epsilon] > 1 - \epsilon$,
        \item $|f^*(\hat{\bfX}^{\text{Strat}}|_{[0,t]}) - f_\epsilon({X}|_{[0,t]})| < \epsilon ~ \forall X \in \mathcal{X}_\epsilon$ and $t \in [0, T]$ 
        \item $|a - a_\epsilon| < \epsilon$, where
        \[
        a_\epsilon := \E_\bQ\left[\left( \pi(\hat{\bfX}^{LL})  - \pi_0 - \int_0^T \left\langle f_\epsilon(X|_{[0,t]}), \D \bfX^{\text{It\^o}}_t \right\rangle\right)^2 ~;~ \mathcal{X}_\epsilon ~\right].
        \]
    \end{enumerate}
\end{theorem}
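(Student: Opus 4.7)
The plan is to combine tightness of $\bQ$ on the Polish space of geometric rough paths with the universal approximation property of the Stratonovich signature kernel, and then bound the change in expected squared P\&L by a uniform approximation of the hedging integrand together with a tail estimate. First, since $\bQ$ is a Borel probability on the Polish space $\mathscr{C}^{\frac{1}{2^+}}_g([0,T];\bR^{d+1})$, it is Radon. For a parameter $\epsilon' > 0$ to be fixed later, I choose a compact $\mathcal{K}_{\epsilon'}$ of lifted paths with $\bQ[\mathcal{K}_{\epsilon'}] > 1 - \epsilon'$, and let $\mathcal{X}_\epsilon \subset \mathcal{C}^{\frac{1}{2^+}}([0,T];\bR^d)$ be the corresponding set of unlifted semimartingale paths, which is compact since the canonical Stratonovich lift is continuous on its image.

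Next, I apply universal approximation. Because $\hat{\bfX}^{\text{Strat}}$ is geometric and time-augmented, the signature is injective on $\mathcal{K}_{\epsilon'}$, and the shuffle identity makes signature coordinates into a point-separating unital subalgebra of $C^0(\mathcal{K}_{\epsilon'};\bR)$. Stone--Weierstrass then yields density of $\cH_{K_{sig}^{\text{Strat}}}$ in $C^0(\mathcal{K}_{\epsilon'};\bR^d)$. Since $f^* \in \mathfrak{F}$ is continuous and the family $\{\hat{\bfX}^{\text{Strat}}|_{[0,t]} : X \in \mathcal{X}_\epsilon,\, t \in [0,T]\}$ is itself a compact subset of $\mathscr{C}^{\frac{1}{2^+}}_g$, I pick $f_\epsilon \in \cH_{K_{sig}^{\text{Strat}}}$ realising property (2) with some small tolerance $\delta$ to be chosen later.

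For property (3), write $L(f, \bfX) := \left(\pi(\hat{\bfX}^{LL}) - \pi_0 - \int_0^T \sprod{f}{\D \bfX^{\text{It\^o}}_t}\right)^2$ and split
\[
a - a_\epsilon = \E_\bQ\bigl[(L(f^*) - L(f_\epsilon))\mathbf{1}_{\mathcal{X}_\epsilon}\bigr] + \E_\bQ\bigl[L(f^*)\mathbf{1}_{\mathcal{X}_\epsilon^c}\bigr].
\]
The tail term tends to $0$ as $\epsilon' \to 0$ by dominated convergence, using $L(f^*) \in L^1_\bQ$ (which follows from $f^* \in \mathfrak{F}$ and $\pi \in L^2_\bQ$). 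For the leading term, factoring $L(f^*) - L(f_\epsilon) = (A-B)(A+B)$ with $A, B$ the two hedged P\&Ls and applying Cauchy--Schwarz reduces the problem to controlling the $L^2_\bQ$-norm of $\int_0^T \sprod{f^* - f_\epsilon}{\D \bfX^{\text{It\^o}}_t}$. By It\^o isometry against $\D [X,X]_t$, the uniform bound $|f^* - f_\epsilon| < \delta$ on $\mathcal{X}_\epsilon$ yields a bound of the shape $\delta \cdot \E_\bQ[[X,X]_T]^{1/2}$. Choosing $\delta$ and $\epsilon'$ small enough in terms of $\epsilon$ closes the estimate.

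The main obstacle is precisely this last step: converting the uniform-in-$(X,t)$ bound from Step 2 into an $L^2$ control of a stochastic integral. This requires It\^o isometry together with integrability of $[X,X]_T$ under $\bQ$, and verification that $f_\epsilon$ is an admissible integrand so that the It\^o integral of $f_\epsilon(X|_{[0,\cdot]})$ against the semimartingale is well-defined in the intended sense. Moreover the indicator $\mathbf{1}_{\mathcal{X}_\epsilon}$ truncates the distribution of $\bfX$ rather than the integrand itself, so some care is needed in interchanging the restriction with the It\^o isometry bound.
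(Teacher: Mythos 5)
The paper's own ``proof'' is, quite literally, a citation: it invokes Theorem~4.7 of \cite{lyons2019nonparametricpricinghedgingexotic} and observes that the \emph{signature-payoffs} appearing there, $X|_{[0,t]} \mapsto \sprod{l}{\hat{\bfX}^{\text{Strat}}|_{[0,t]}}$, are exactly elements of $\cH_{K_{sig}^{\text{Strat}}}$, so the statement is a reframing of that result in RKHS language. What you have written is instead an attempt to prove the cited theorem from scratch, so you are not missing the paper's intent but you are doing a strictly harder job. Your skeleton (Ulam tightness of $\bQ$ on the Polish space of geometric rough paths, Stone--Weierstrass via shuffle/injectivity to get a uniform approximation on a compact family of stopped paths, then an $L^2_{\bQ}$ estimate for the error in the stochastic integral) is indeed the sort of argument that underlies the cited result, and conclusions~(1)~and~(2) would come out of it essentially as you describe, granting continuity of the restriction map $(\bfX,t)\mapsto\bfX|_{[0,t]}$ into the stopped-path space so that $\{\hat{\bfX}^{\text{Strat}}|_{[0,t]}\}$ is compact.

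However, for conclusion~(3) the obstacle you flag at the end is a genuine gap, not a technicality to be waved away. The It\^o isometry turns $\E\left[\bigl(\int_0^T\sprod{f^*-f_\epsilon}{\D\bfX^{\text{It\^o}}}\bigr)^2\right]$ into $\E\left[\int_0^T|f^*-f_\epsilon|^2\,\D[X,X]_t\right]$, which requires a pointwise bound on $|f^*-f_\epsilon|$ along the \emph{whole} trajectory; but your bound $|f^*-f_\epsilon|<\delta$ holds only for paths $X\in\mathcal{X}_\epsilon$, and the event $\{X\in\mathcal{X}_\epsilon\}$ is $\mathcal{F}_T$-measurable, not an adapted domain you can localise against. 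Writing $\E[(\cdot)^2\mathbf{1}_{\mathcal{X}_\epsilon}]\le\E[(\cdot)^2]$ discards the restriction and leaves the integrand uncontrolled on $\mathcal{X}_\epsilon^c$, where $f_\epsilon$ (a truncated signature functional) is unbounded. Two further issues compound this: you need $[X,X]_T\in L^1_\bQ$, which is not guaranteed for a general continuous semimartingale, and you need $f_\epsilon$ to be an admissible integrand in the first place, i.e.\ $\int f_\epsilon\,\D\bfX^{\text{It\^o}}\in L^2_\bQ$, which does not follow from membership in $\mathfrak{F}$ (that condition is imposed on $f^*$, not on arbitrary RKHS elements). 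Closing the gap requires either localising with an adapted stopping time built so that both $|f_\epsilon|$ and $[X,X]$ stay controlled until it fires (and showing the contribution after the stopping time is negligible), or truncating $f_\epsilon$ and re-deriving the uniform approximation for the truncation; both are exactly the technical content one is delegating to \cite{lyons2019nonparametricpricinghedgingexotic} by citing it, which is why the paper's terse proof is in fact the efficient route.
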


{
\begin{proof}
    We just reframed \citep[Thm. 4.7]{lyons2019nonparametricpricinghedgingexotic} in the language of RKHSs, leveraging the fact that their \emph{signature-payoffs} of type $X|_{[0,t]} \mapsto \sprod{l}{\hat{\bfX}^{\text{Strat}}|_{[0,t]}}_{T((\bR^{d+1}))}$ are elements of the space $\cH_{K_{sig}^{\text{Strat}}}$.
\end{proof}
}

Since both $\mathbf{X}^{\text{It\^o}}$ and $\mathbf{X}^{\text{Strat}}$ are projections of $\mathbf{X}^{LL}$, 
the minimization 
\[
\min_{f \in \cH_{K_{sig}^{\text{Strat}}}}\E_\bQ\left[\left( \pi(\hat{\bfX}^{LL})  - \pi_0 - \int_0^T \left\langle f(X|_{[0,t]}), \D \bfX^{\text{It\^o}}_t \right\rangle\right)^2 ~;~ \mathcal{X}_\epsilon ~\right].
\]
falls in the kernel framework presented above. 
In Appendix \ref{app:wellDefSig} we show that $K_{sig}^{\text{Strat}}$ can be integrated against $\bfX^{\text{It\^o}}$, so that Prop. \ref{prop:Phi_rough_all} can be applied and the problem recast as 
\[
\min_{f \in \cH_{K_{sig}^{\text{Strat}}}}
\E_\bQ\left[\left( \pi(\hat{\bfX}^{LL})  - \pi_0 - \left\langle f, \Phi_{\hat{\bfX}^{LL}} \right\rangle_{\cH_{K_{sig}^{\text{Strat}}}} \right)^2 ~;~ \mathcal{X}_\epsilon ~\right].
\]
for the well defined vector
\[
\Phi_{\hat{\bfX}^{LL}} := \int_0^T K_{sig}^{\text{Strat}}(X|_{[0,t]}, \cdot) \D \bfX^{\text{It\^o}}_t \in \cH_{K_{sig}^{\text{Strat}}}
\]

\begin{remark}
In practice, the Gram matrix with entries 
\begin{equation}\label{eqn:gram_sig}
    \int_0^T \int_0^T  K_{sig}^{\text{Strat}}(X|_{[0,t]}, Y|_{[0,s]}) \D \bfX^{\text{It\^o}}_t \D \bfY^{\text{It\^o}}_s 
\end{equation} 
has to be computed having only access to $M$ samples 
$$X, Y : \mathcal{T}_M := \{0 = t_0 < \cdots < t_M = T\} \to \bR^d.$$
This can be achieved by first computing with high accuracy $K_{sig}^{\text{Strat}}(X|_{[0,t]}, Y|_{[0,s]})$ on the grid $\mathcal{T}_M \times \mathcal{T}_M$ leveraging the geometric nature of Stratonovich lifts, for example by solving the signature kernel PDE (\cite{salvi2021signature}) for smooth interpolations of the samples, and then approximating the double integral with a standard Euler scheme.
\end{remark}

\begin{remark}
    Similarly, one can consider the above expression entirely formulated in terms of It\^o signature kernels, reducing the problem to 
    \[
    \min_{f \in \cH_{K_{sig}^{\text{It\^o}}}}
    \E_\bQ\left[\left( \pi(\hat{\bfX}^{\text{It\^o}})  - \pi_0 - \left\langle f, \Phi_{\hat{\bfX}^{\text{It\^o}}} \right\rangle_{\cH_{K_{sig}^{\text{It\^o}}}} \right)^2 ~;~ \mathcal{X}_\epsilon ~\right].
    \]
    Given a continuous semi-martingale $X : [0,1] \to \R^d$, it is easy to check that the following expression holds
    \begin{equation}
        Sig(\mathbf{X}^{\text{It\^o}})_{s,t} = 1 + \int_s^t Sig(\mathbf{X}^{\text{It\^o}})_{s,r} \otimes (\circ dX_r - \frac{1}{2}d[X,X]_r),
    \end{equation}
    where $[X,X]$, a.s. in $C^{1-var}([0,1];(\R^d)^{\otimes 2})$, is the quadratic variation process of $X$. As a corollary, we can derive the following result, which is of independent interest to the rest of the paper (see Appendix \ref{app:sect:ItoSigKer} for a proof).
    \begin{theorem}
        Given continuous semimartingales $X, Y : [0,1] \to \R^d$ their It\^o Signature Kernel can be written only in terms of Stratonovich integrals as the solution of the following system of 3 SDEs:
        
        \begin{equation*}
            \R^d \ni G(s,t) = \int_{\tau = 0}^t \int_{\sigma = 0}^s K^{It\hat{o}}(X,Y)_{\sigma,\tau} d[Y,Y]^T_{\tau} \circ dX_{\sigma} - \frac{1}{2} \int_{\tau = 0}^t 
            d[Y, Y]^T_{\tau} F(s,\tau),
        \end{equation*}
        \begin{equation*}
            \R^d \ni F(s,t) = \int_{\tau = 0}^t \int_{\sigma = 0}^s K^{It\hat{o}}(X,Y)_{\sigma,\tau} d[X,X]^T_{\sigma} \circ dY_{\tau} - \frac{1}{2} \int_{\sigma = 0}^s
            d[X, X]^T_{\sigma} G(\sigma, t),
        \end{equation*}
        \begin{align*}
           \R \ni K^{It\hat{o}}(X,Y)_{s,t} = 
            \hspace{5pt} 1 
            &+  \int_0^s \int_0^t 
            K^{It\hat{o}}(X,Y)_{\sigma,\tau}  
            \sprod{\circ dX_{\sigma}}{\circ dY_{\tau}}_{\R^d}
            \\
            &+ \frac{1}{4} \int_0^s \int_0^t 
            K^{It\hat{o}}(X,Y)_{\sigma,\tau} Tr(d[X,X]^T_{\sigma} d[Y,Y]_{\tau})
            \\
            & - \frac{1}{2} 
            \int_{0}^s \sprod{G(\sigma,t)}{\circ dX_{\sigma}}_{\R^d}
            - \frac {1}{2} 
            \int_{0}^t \sprod{F(s,\tau)}{\circ dY_{\tau}}_{\R^d}.
        \end{align*}
        
        \end{theorem}
        
\end{remark}


This result could help address a practical challenge: Stratonovich signature kernels can be computed with high accuracy by leveraging smooth approximations of the driving paths and solving the associated PDEs. However, the same does not hold true for their It\^o counterparts, owing to the non-geometric nature of this lift.

By expressing the Itô kernel in terms of Stratonovich integrals, one can apply these smoothing techniques. Naturally, this comes with a trade-off, as the quadratic variation must now be provided as input to the method.
A comprehensive exploration of these numerical aspects is left for future research.




\section{A General Theory}
\label{sec:general_theo}

From the previous motivating example of quadratic Hedging it seems that this \say{kernel trick} is suitable only for quadratic problems where only information from the underlying price processes is considered.

Fortunately the method generalizes with little effort to more general settings as we are about to show in the present section. 

\subsection{General Setting}

Fix a probability space $(\cX, \mathcal{F}, \mu)$, this will be our \emph{base space}. Think of this as the space of all the available data, which should include the price processes of the assets one wants to trade but could moreover contain e.g. sentiment information, market regime indicators, aggregate market data, price processes of other assets.
All these datapoints will have to be \say{synthesized} together in a suitable operator valued Kernel $K$ and the tradable asset prices will have to be singled out trough some signal map $\xi$.
Fix the following 3 \say{items}:

\begin{enumerate}[label=\roman*.]
    \item A TVS feature space $\mathcal{G}_{\psi}$ and a \emph{feature extractor}  $\psi : \cX \to C^0([0,T];\mathcal{G}_{\psi})$.
        
    \item A {finite dimensional} vector (\emph{market}) space $\cH_{\xi}$ and a \emph{market signal} map $\xi : \cX \to \Omega^\alpha_T(\cH_{\xi})$.

    \item An \emph{operator valued kernel} defined on $\mathcal{S} := \bigcup_{s \in [0,T]} C^0([0,s];\mathcal{G}_{\psi})$:
    $$K: \mathcal{S} \times \mathcal{S} \to \mathcal{L}(\cH_{\xi})$$
    and its corresponding Reproducing Kernel Hilbert Space 
    $$\cH_{K} \subseteq \{\mathcal{S} \to \cH_{\xi} \}.$$
\end{enumerate}

The minimisation problems we aim to solve will have the more general form
\[
\arg\min_{F \in \cH_{K}} 
        \left\{
        \int_{\cX \times Y} V(\int_0^1 \sprod{F(\psi(x)|_{[0,t]})}{\D \xi(x)_t}_{\cH_{\xi}}, y) d\tilde\mu(x,y)
        + \frac{\lambda}{2} \norm{F}_{\cH_{K}}
        \right\},
\]
where $\tilde\mu$ is an extension of $\mu$ on $\cX \times Y$, with $Y$ Banach, and $V$ a $p$-Loss Function (as defined below).
As before the main ingredient in the solution will be giving a meaning and provide an understanding of the expressions
\begin{equation}
    \int_0^T \sprod{F(\psi(x)|_{[0,t]})}{\D \xi(x)_t}_{\cH_{\xi}}
\end{equation}
for $x \in supp(\mu) \subseteq \cX$ and $F \in \cH_{K}$. 

\begin{remark}
    The quadratic hedging setting considered above can readily be recovered with the following choices: $\cX = \Omega^\alpha_T(V)$, $\mu = \mathbb{Q}^x$, $Y = \bR$, $d\tilde\mu(x,y) := d \mu(x) \otimes d\delta_{\pi(x)}(y)$ with $V(x,y) = (y - \pi_0 - x)^2$ and $\mathcal{G}_{\psi} = \cH_{\xi} = V$, $\psi = \xi = Id_{\Omega^\alpha_T}$.
\end{remark}

\subsection{Feature Map $\Phi_x$}

The arguments and the result are basically the same, in fact the proof is done directly in this general case in the Appendix, only the language changes, becoming richer to accomodate the new objects.

\begin{theorem}\label{thm:generic_Phi}
    Let $\alpha \in (0, 1)$ and $N = \lfloor{\frac{1}{\alpha}}\rfloor$ and $x \in \cX$.
    Let $\xi(x) \in \Omega_T^\alpha(\cH_\xi)$ be the rough path
    $$\boldsymbol{\xi}(x) = (1, {\xi}^1(x),\cdots,\xi^N(x)): \Delta_{0,T} \to T^{(N)}(\cH_{\xi})$$
    and assume the existence of a path 
    \[
    \boldsymbol{K}(x) = (K^0(x), \cdots, K^{N-1}(x)) : [0,T] \to \bigoplus\limits_{m=0}^{N-1} \mathcal{L}(\cH_{\xi}^{\otimes m}; \mathcal{L}(\cH_{\xi}; \cH_{K})) 
    \]
    extending $K^0(x)_s := K(\psi(x)|_{[0,s]}, \cdot)$ and such that for every $m \leq N - 1$
    \begin{equation}
        \norm{K^m(x)_t - \boldsymbol{K}(x)_s[\boldsymbol{\xi}(x)_{s,t} \otimes (\cdot)]}_{\mathcal{L}(\cH_{\xi}^{\otimes m}; \mathcal{L}(\cH_{\xi}; \cH_{K})) } \leq C |t-s|^{(N-m)\frac{1}{p}},
    \end{equation}
    \begin{equation}
        \norm{K^m(x)_t - K^m(x)_s}_{\mathcal{L}(\cH_{\xi}^{\otimes m}; \mathcal{L}(\cH_{\xi}; \cH_{K})) } \leq C |t-s|^{\frac{1}{p}}.
    \end{equation}
    then it is well defined in a rough sense\footnote{we implicity use the inclusion $\mathcal{L}(\cH_{\xi}^{\otimes m}; \mathcal{L}(\cH_{\xi}; \cH_{K}))  \xhookrightarrow{} \mathcal{L}(\cH_{\xi}^{\otimes m +1}; \cH_{K})$} 
    \begin{align*}
        \Phi_x & := \int_0^T K(\psi(x)|_{[0,t]}, \cdot) \D \boldsymbol{\xi}(x)_t 
        \\
        & := \lim\limits_{|\mathcal{P}| \to 0} \sum\limits_{[s,t]\in \mathcal{P}}
        \sum_{m = 0}^{N-1}  K^m(x)_s [\boldsymbol{\xi}(x)^{m+1}_{s,t}]
        \in \cH_{K}.
    \end{align*}
    Furthermore each $t \mapsto f(\psi(x)|_{[0,t]})$ is itself integrable against $\D \boldsymbol{\xi}$ in a rough sense and
    \begin{equation}\label{eqn:core_swap_rough}
        \int_0^T \left\langle f(\psi(x)|_{[0,t]}), \D \boldsymbol{\xi}(x)_t \right\rangle_{\cH_{\xi}} = \left\langle f, \Phi_x \right\rangle_{\cH_{K}}.
    \end{equation}
    
\end{theorem}
\begin{proof}
    See Appendix \ref{app:sect:feature_map}
\end{proof}

As before $\Phi: \cX \to \cH_{K}$ defines a new real valued kernel on $\cX$:
\begin{equation}
    \begin{aligned}
         & \mathcal{K}_{\Phi} : \cX \times \cX \to \bR 
         \\
         & \mathcal{K}_{\Phi}(x,\tilde x) = \sprod{\Phi_x}{\Phi_{\tilde x}}_{\cH_{K}}.
    \end{aligned}
\end{equation}
and the associated RKHS $(\cH_{\Phi}, \sprod{\cdot}{\cdot}_{\cH_{\Phi}})$. 

\begin{lemma}
    In the hypotheses of the previous result
    \begin{align*}
        \mathcal{K}_{\Phi}(x,\tilde x) & = 
        \int_{s=0}^T 
        \sprod{\int_{t=0}^T K(\psi(\tilde x)|_{[0,t]}, \psi(x)|_{[0,s]}) \D \boldsymbol{\xi}(\tilde x)_t}{\D \boldsymbol{\xi}(x)_s}_{\cH_{\xi}}
        \\
        & = 
        \int_{t=0}^T 
        \sprod{\int_{s=0}^T K(\psi(x)|_{[0,s]}, \psi(\tilde x)|_{[0,t]}) \D \boldsymbol{\xi}(x)_s}{\D \boldsymbol{\xi}(\tilde x)_t}_{\cH_{\xi}} \in \bR
    \end{align*}
\end{lemma}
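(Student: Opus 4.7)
The plan is to mirror the proof of Proposition \ref{prop:double_integral} verbatim in this enriched setting, since Theorem \ref{thm:generic_Phi} already supplies exactly the rough-integral machinery needed for $\Phi_x$ and $\Phi_{\tilde x}$ in the presence of the feature extractor $\psi$ and market signal $\xi$. The two displayed equalities will both reduce to $\sprod{\Phi_x}{\Phi_{\tilde x}}_{\cH_K}$, which by definition is $\mathcal{K}_{\Phi}(x,\tilde x)$.

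First, I would peel off the inner integral. For fixed $s \in [0,T]$, by the reproducing property applied pointwise in $t$, for every $z \in \cH_\xi$ one has
\[
\sprod{K(\psi(\tilde x)|_{[0,t]}, \psi(x)|_{[0,s]})\,v}{z}_{\cH_\xi} = \sprod{K(\psi(\tilde x)|_{[0,t]}, \cdot)\,v}{K(\psi(x)|_{[0,s]}, \cdot)\,z}_{\cH_K}
\]
for $v \in \cH_\xi$. Taking $v = \D\boldsymbol{\xi}(\tilde x)_t$ in the sense of the rough Riemann sum defining the integral, the continuity of $\sprod{\cdot}{K(\psi(x)|_{[0,s]}, \cdot)\,z}_{\cH_K}$ allows the pairing to pass through the limit. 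Invoking Theorem \ref{thm:generic_Phi} applied to $\tilde x$ then yields
\[
\sprod{\int_{t=0}^T K(\psi(\tilde x)|_{[0,t]}, \psi(x)|_{[0,s]})\,\D\boldsymbol{\xi}(\tilde x)_t}{z}_{\cH_\xi} = \sprod{\Phi_{\tilde x}}{K(\psi(x)|_{[0,s]}, \cdot)\,z}_{\cH_K} = \sprod{ev^{K}_{\psi(x)|_{[0,s]}}[\Phi_{\tilde x}]}{z}_{\cH_\xi},
\]
where the last equality is the reproducing property \eqref{eqn:rep_property} read in the opposite direction. Since $z$ is arbitrary, the inner integral equals $ev^{K}_{\psi(x)|_{[0,s]}}[\Phi_{\tilde x}]$.

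Second, I would substitute this into the outer integral and apply Theorem \ref{thm:generic_Phi} once more, this time to $x$ with the choice $f = \Phi_{\tilde x} \in \cH_K$. Using equation \eqref{eqn:core_swap_rough} directly,
\[
\int_{s=0}^T \sprod{ev^{K}_{\psi(x)|_{[0,s]}}[\Phi_{\tilde x}]}{\D\boldsymbol{\xi}(x)_s}_{\cH_\xi} = \int_{s=0}^T \sprod{\Phi_{\tilde x}(\psi(x)|_{[0,s]})}{\D\boldsymbol{\xi}(x)_s}_{\cH_\xi} = \sprod{\Phi_{\tilde x}}{\Phi_x}_{\cH_K} = \mathcal{K}_{\Phi}(x,\tilde x),
\]
which establishes the second of the two claimed expressions. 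The first expression then follows by the identical argument with the roles of $x$ and $\tilde x$ exchanged, together with the symmetry $\sprod{\Phi_x}{\Phi_{\tilde x}}_{\cH_K} = \sprod{\Phi_{\tilde x}}{\Phi_x}_{\cH_K}$.

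The only mild obstacle is bookkeeping: one must check that the pairing $\sprod{\cdot}{K(\psi(x)|_{[0,s]},\cdot)z}_{\cH_K}$ can be interchanged with the rough-integral limit of the sums defining $\Phi_{\tilde x}$, and that $ev^{K}_{\psi(x)|_{[0,s]}}[\Phi_{\tilde x}]$ is recognised as an element of $\cH_K$ in the sense needed to feed into the second application of Theorem \ref{thm:generic_Phi}. Both points are covered by the reproducing-property/continuity package underlying Theorem \ref{thm:generic_Phi}, whose hypotheses are assumed throughout the lemma, so no further work beyond a careful citation is required.
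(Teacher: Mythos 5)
Your proof is correct and takes essentially the same route as the paper's, which simply points to the argument of Proposition \ref{prop:double_integral}: identify the inner rough integral as the evaluation $ev^{K}_{\psi(x)|_{[0,s]}}[\Phi_{\tilde x}]$ by pushing the continuous pairing through the rough-Riemann limit, then apply the $\Phi$-trick (Theorem \ref{thm:generic_Phi} with $f=\Phi_{\tilde x}$) to the outer integral to land on $\sprod{\Phi_{\tilde x}}{\Phi_x}_{\cH_K}=\mathcal{K}_{\Phi}(x,\tilde x)$. One trivial slip: the chain of equalities you compute (inner $t$-integral over $\tilde x$, outer $s$-integral over $x$) is the \emph{first} displayed expression of the lemma, not the second, so the final two sentences have the labels ``first'' and ``second'' swapped; the argument itself is unaffected.
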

\begin{proof}
    Just a repetition of the arguments of Proposition \ref{prop:double_integral}.
\end{proof}

Since $\cH_{\Phi}$ is the completion of $Span\{\Phi_x : x \in \cX \} \subseteq \cH_{K}$ under $\sprod{\cdot}{\cdot}_{\cH_{K}}$ one has the natural inclusion $\cH_{\Phi} \xhookrightarrow{i} \cH_{K}$ and trough this we also get the decomposition
$$\cH_{K} = \cH_{\Phi} \oplus \cH_{\Phi}^{\perp}.$$

Once again the evaluation functionals
    \[
    ev_Z^{K}(f) = \sprod{f}{K(Z,\cdot)[*]}_{\cH_{K}} \in (\cH_{\xi})^* \simeq \cH_{\xi}
    \text{ and }
    ev_x^{\Phi}(f) = \sprod{f}{\Phi_x}_{\cH_{K}} \in \bR
    \]
    on these two spaces are different and induce the two inclusions
    \[
    \cH_{K} \xhookrightarrow{\iota_{K}} \{ \mathcal{S} \to \cH_{\xi} \} \quad \text{and} \quad
    \cH_{\Phi} \xhookrightarrow{\iota_{\Phi}} \{ \cX \to \bR \}.
    \]

    \begin{remark}
        In this general setting the difference between the two spaces is clearer since when seen as function spaces they take different kinds of inputs.
    \end{remark}

\subsection{Noteworthy Special Cases}

\subsubsection{Induced Kernel - General}

Assume to have a well defined real valued kernel $\kappa: \cS \times \cS \to \bR$. 
One can define an operator valued kernel $K : \cS \times \cS \to \cH_{\xi}$ as $K(x,y) := \kappa(x,y)Id_{\cH_{\xi}}$.

\begin{lemma}
    Under the assumptions above $\cH_{K} \simeq \cH_{\kappa} \otimes \cH_{\xi}$.
\end{lemma}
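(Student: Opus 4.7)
The plan is to adapt the construction from the earlier Example (the one with the matrix $A$) and show that with $A = Id_{\cH_{\xi}}$ the isometric embedding there is actually surjective, so it becomes an isomorphism.

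First I would work with the natural dense subspaces on both sides. By property iii.\ of the RKHS characterization, $\cH_{K}$ is the closure of $\mathcal{E}_{K} := \text{Span}\{K(x,\cdot) y : x \in \mathcal{S}, \, y \in \cH_{\xi}\}$, while $\cH_{\kappa} \otimes \cH_{\xi}$ is the closure of $\mathcal{E}_{\otimes} := \text{Span}\{\kappa(x,\cdot) \otimes y : x \in \mathcal{S}, \, y \in \cH_{\xi}\}$. Next I would define the candidate linear map $T : \mathcal{E}_{K} \to \mathcal{E}_{\otimes}$ by
\begin{equation*}
T\bigl(K(x,\cdot) y\bigr) := \kappa(x,\cdot) \otimes y
\end{equation*}
and extend by linearity. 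The key computation is that on generators
\begin{equation*}
\sprod{K(x,\cdot)v}{K(y,\cdot)w}_{\cH_{K}} = \sprod{K(x,y)v}{w}_{\cH_{\xi}} = \kappa(x,y)\sprod{v}{w}_{\cH_{\xi}} = \sprod{\kappa(x,\cdot) \otimes v}{\kappa(y,\cdot) \otimes w}_{\cH_{\kappa} \otimes \cH_{\xi}},
\end{equation*}
where the first equality is the reproducing property for $K$ and the last uses the definition of the tensor-product inner product together with the reproducing property for $\kappa$. By polarization/bilinearity this identity passes to arbitrary finite linear combinations, which simultaneously proves that $T$ is well-defined (any null combination on the left is null on the right, and vice versa) and that $T$ is an isometry on $\mathcal{E}_{K}$.

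Having an isometry on a dense subspace, I would extend $T$ uniquely to a linear isometry $\bar T : \cH_{K} \to \cH_{\kappa} \otimes \cH_{\xi}$. Surjectivity is immediate here, as opposed to the matrix-$A$ example: the image contains $\mathcal{E}_{\otimes}$, which is dense, and the image of an isometry between Hilbert spaces is closed, so $\bar T(\cH_{K}) = \cH_{\kappa} \otimes \cH_{\xi}$. Hence $\bar T$ is the desired isometric isomorphism.

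The only delicate point is the well-definedness of $T$, since $K(x,\cdot)y$ and $\kappa(x,\cdot)\otimes y$ are formal generators and one must rule out that a nontrivial combination vanishes on one side but not the other; this is handled exactly by the inner-product computation above, which shows that the pre-Hilbert structures on $\mathcal{E}_{K}$ and $\mathcal{E}_{\otimes}$ agree under the correspondence $K(x,\cdot)y \leftrightarrow \kappa(x,\cdot)\otimes y$. No further assumptions on $\kappa$ or $\cH_{\xi}$ beyond those already implicit (well-posed $\kappa$, Hilbert $\cH_{\xi}$) are needed, and the identification is canonical in $(x,y)$.
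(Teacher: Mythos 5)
Your proof is correct and follows essentially the same path as the paper: verify on generators that $\sprod{K(x,\cdot)v}{K(y,\cdot)w}_{\cH_K} = \kappa(x,y)\sprod{v}{w}_{\cH_\xi} = \sprod{\kappa(x,\cdot)\otimes v}{\kappa(y,\cdot)\otimes w}_{\cH_\kappa\otimes\cH_\xi}$ and extend by density to an isometric isomorphism. You are rather more explicit than the paper's one-line argument about well-definedness and surjectivity, which is a welcome bit of added rigor; one small side note is that the matrix-$A$ embedding in the earlier example is in fact also onto (since $A$ there is assumed strictly positive definite, hence invertible), so the contrast you draw is not quite the right explanation for why surjectivity is ``immediate here.''
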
 

\begin{proof}
    Recall that $\cH_{K}$ is the closure of $\mathrm{Span}(K(x,\cdot)z ~|~ x \in \cS, z \in \cH_{\xi})$ under the scalar product 
    \begin{align*}
    \sprod{K(x,\cdot)z}{K(y,\cdot)w}_{\cH_{K}} & := \sprod{K(x,y)z}{w}_{\cH_{\xi}} 
    \\ &= \kappa(x,y)\sprod{z}{w}_{\cH_{\xi}} 
    = \sprod{\kappa(x,\cdot)}{\kappa(y,\cdot)}_{\cH_{\kappa}} \sprod{z}{w}_{\cH_{\xi}}
    \end{align*}
    so that $K(x,\cdot)z \mapsto \kappa(x,\cdot) \otimes z \in \cH_{\mathbb{K}} \otimes \cH_{\xi}$ extends to an isometry.
    
\end{proof}

Under this identification for $x \in \cX$
\[
\Phi_x = \int_0^T \kappa(\psi(x)|_{[0,t]}, \cdot) \otimes d \boldsymbol{\xi}(x)_t  \in \cH_{\kappa} \otimes \cH_{\xi}.
\]

\subsubsection{Induced Kernel - Feature Map}
Of special interest is the case where $\cH_{\kappa} \subseteq \bR^N$ and $\cH_{\xi} = \bR^{d_{\xi}}$, obtained when $\kappa(x,y) = \sprod{\mathbb{F}(x)}{\mathbb{F}(y)}_{\bR^N}$ for some feature map $\mathbb{F}: \cS \to \bR^N$.
In this setting $\cH_{\kappa} \otimes \cH_{\xi} \subseteq \bR^{N \times d_{\xi}}$ is a subset of matrices endowed with Hilbert-Schmidt norm and we can write
\begin{equation}
    \Phi_x = \int_0^T 
    \underbrace{
    \mathbb{F}(\psi(x)|_{[0,t]}) 
    }_{N \times 1}
    \underbrace{
    \D \boldsymbol{\xi}(x)_t^{\top}  
    }_{1 \times d_{\xi}}
    \in \bR^{N \times d_{\xi}}
\end{equation}
and moreover $\mathcal{K}_{\Phi}(x,y) = Tr(\Phi_x\Phi_y^{\top})$.

\vspace{10pt}
This technique is particularly relevant in the context of \emph{Neural Signature Kernels} \cite{cirone2023neural}, a family of path-space kernels that generalize signature kernels and can only be computed through feature maps.

\subsection{The Optimization Problem - Explicit Solutions}

\begin{definition}
    Given $p \in [1, +\infty)$, a function $V : \bR \times Y \to [0,+\infty)$ such that 
    \begin{enumerate}[label=\roman*.]
        \item $\forall y \in Y. \quad V(\cdot, y): \bR \to [0,+\infty)$ is convex.
        \item  V is measurable.
        \item  There are $b \geq 0$ and $a: \bR \to [0,+\infty)$ such that 
        \begin{equation}
            \begin{aligned}
                & V(x,y) \leq a(y) + b|x|^\alpha  \quad \forall (x,y) \in \bR \times Y
                \\
                & \int_{\cX \times Y} a(y) d\mu(x,y) < +\infty
            \end{aligned}
        \end{equation}
    \end{enumerate}
    is called a \emph{p-loss} function with respect to $\mu$.
\end{definition}

With the same proof of Theorem \ref{thm:repr_theorem_body}, as a corollary of \cite[Thm. 2]{regKernels} we have the following result:
\begin{theorem}\label{thm:repr_better}
    Assume the base space $\cX$ to be locally compact and second countable such that $\mathcal{K}_{\Phi}: \cX \times \cX \to \bR$ is measurable. Let $Y$ be a close subset of $\bR$ and $\tilde \mu$ be a probability measure on $\cX \times Y$. 
    If for $p \in [1,+\infty)$ it holds
    \[
    \int_{\cX \times Y} \mathcal{K}_{\Phi}(x, x)^{\frac{p}{2}} d\tilde\mu(x,y) < +\infty,
    \]
    then for any \emph{p-loss} function $V$ with respect to $\tilde \mu$ and $\lambda > 0$ one has
    \begin{equation*}
        \begin{aligned}
            F^* \in & \arg\min_{F \in \cH_{K}} 
        \left\{
        \int_{\cX \times Y} V(\int_0^1 \sprod{F(\psi(x)|_{[0,t]})}{\D \boldsymbol{\xi}(x)_t}_{\cH_{\xi}}, y) d\tilde \mu(x,y)
        + \frac{\lambda}{2} \norm{F}_{\cH_{K}}
        \right\}
        \\
        & = \arg\min_{F \in \cH_{\Phi}} 
        \left\{
        \int_{\cX \times Y} V(\sprod{F}{\Phi_{x}}_{\cH_{\Phi}}, y) d\tilde \mu(x,y)
        + \frac{\lambda}{2} \norm{F}_{\cH_{\Phi}}
        \right\}
        \end{aligned}
    \end{equation*}
    if and only if there is $\alpha^* \in L^q_{\tilde \mu}(\cX \times Y)$, with $\frac{1}{p} + \frac{1}{q} = 1$, such that 
    \begin{equation}
        \begin{aligned}
            & \alpha^*(x, y) \in \partial_1 V(ev^{\Phi}_{x}F^*, y)  \quad \tilde \mu-a.e.
            \\
            & ev^{\Phi}_{x}F^* = - \frac{1}{\lambda} \int_{\cX\times Y} \mathcal{K}_{\Phi}(x,x') \alpha^*(x') d\tilde \mu(x',y) .
        \end{aligned}
    \end{equation}
    Then for all $z\in \cH_{\xi}$ and $s \in \mathcal{S}$ one has
    \[
    \sprod{ev^{K}_{s}F^*}{z}_{\cH_{\xi}} = \sprod{F^*}{K(s, \cdot)z}_{\cH_{K}} =
    - \frac{1}{\lambda} \int_{\cX\times Y} \sprod{\Phi_{x'}}{K(s, \cdot)z}_{\cH_{K}} \alpha^*(x') d\tilde \mu(x',y)
    \]
    \emph{i.e.}
    \[
    \sprod{ev^{K}_{s}F^*}{z}_{\cH_{\xi}} =
    - \frac{1}{\lambda} \int_{\cX\times Y}  \alpha^*(x') \left( \int_0^1 \sprod{K(s, \psi(x')|_{[0,t]})z}{\D \boldsymbol{\xi}(x')_t}_{\cH_{\xi}}  \right) d\tilde \mu(x',y).
    \]
\end{theorem}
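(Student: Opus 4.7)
The plan is to mirror the proof of Theorem \ref{thm:repr_theorem_body}, reducing the problem to a standard RKHS regression problem via the feature-map identity of Theorem \ref{thm:generic_Phi}, and then to invoke \cite[Theorem 2]{regKernels}. The first step is to apply equation \eqref{eqn:core_swap_rough}: for every $F \in \cH_{K}$ and $x \in \cX$ we have
\[
\int_0^T \sprod{F(\psi(x)|_{[0,t]})}{\D \boldsymbol{\xi}(x)_t}_{\cH_{\xi}} = \sprod{F}{\Phi_x}_{\cH_{K}} = ev^{\Phi}_x F,
\]
so the integral term in the objective depends on $F$ only through the $\cH_\Phi$-linear functional $F \mapsto \sprod{F}{\Phi_x}_{\cH_K}$.

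Next I would use the orthogonal decomposition $\cH_{K} = \cH_{\Phi} \oplus \cH_{\Phi}^{\perp}$ noted in the remark preceding the statement. Writing $F = F_\Phi + F_\perp$ we get $\sprod{F}{\Phi_x}_{\cH_K} = \sprod{F_\Phi}{\Phi_x}_{\cH_\Phi}$ while $\norm{F}_{\cH_K}^2 = \norm{F_\Phi}_{\cH_\Phi}^2 + \norm{F_\perp}_{\cH_K}^2$; hence every minimizer satisfies $F_\perp = 0$, which establishes the first equality in the statement and lets me restrict the minimization to $\cH_{\Phi}$. At this point I would apply \cite[Theorem 2]{regKernels} to the real-valued kernel $\mathcal{K}_\Phi$ on $\cX$ with regularization parameter $\lambda/2$, data measure $\tilde\mu$ on $\cX \times Y$, and loss $V$: the growth bound is exactly the $p$-loss condition, and the required integrability $\int \mathcal{K}_\Phi(x,x)^{p/2} d\tilde\mu < +\infty$ is assumed. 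This directly yields the existence of $\alpha^* \in L^q_{\tilde\mu}(\cX \times Y)$ satisfying the subgradient inclusion and the representer identity
\[
ev^{\Phi}_{x}F^* = -\frac{1}{\lambda} \int_{\cX\times Y} \mathcal{K}_{\Phi}(x,x') \alpha^*(x',y') \,d\tilde\mu(x',y').
\]

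For the last claim, I would rewrite the representer formula as the Bochner identity $F^* = -\frac{1}{\lambda} \int \alpha^*(x',y') \Phi_{x'} \, d\tilde\mu(x',y') \in \cH_{\Phi}$, whose well-posedness follows from $\int \norm{\Phi_{x'}}_{\cH_K} d|\alpha^*| d\tilde\mu \leq \lVert \alpha^* \rVert_{L^q} \lVert \mathcal{K}_\Phi(x,x)^{1/2}\rVert_{L^p}$ by Hölder. Then, for any $s \in \mathcal{S}$ and $z \in \cH_\xi$, I evaluate $ev^K_s F^*$ by pairing against $K(s,\cdot)z$ and commuting the inner product with the Bochner integral:
\[
\sprod{ev^K_s F^*}{z}_{\cH_\xi} = -\frac{1}{\lambda}\int \alpha^*(x',y') \sprod{\Phi_{x'}}{K(s,\cdot)z}_{\cH_K} d\tilde\mu(x',y'),
\]
and rewrite the inner pairing $\sprod{\Phi_{x'}}{K(s,\cdot)z}_{\cH_K}$ by unfolding the definition of $\Phi_{x'}$ as a rough integral and using the reproducing property together with the symmetry $K(\psi(x')|_{[0,t]}, s) = K(s, \psi(x')|_{[0,t]})^\top$; this produces the stated integral representation.

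The main obstacle is the careful bookkeeping between the two distinct evaluation functionals $ev^\Phi$ and $ev^K$: the regKernels theorem is stated for the scalar RKHS $\cH_\Phi$ and controls only $ev^{\Phi}_x F^*$, whereas the desired output is the $\cH_\xi$-valued functional $ev^K_s F^*$ defined on the larger index set $\mathcal{S}$. Reconciling the two requires lifting the scalar representer formula to the Bochner-integral identity for $F^*$ in $\cH_K$ and then applying the reproducing property of $K$ rather than that of $\mathcal{K}_\Phi$; the Hölder estimate above is what legitimizes this lift.
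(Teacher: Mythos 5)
Your proposal matches the paper's proof. The paper's own argument is essentially a one-liner—``with the same proof of Theorem~\ref{thm:repr_theorem_body}, as a corollary of \cite[Thm.~2]{regKernels}''—and what you spell out (reduce to $\cH_\Phi$ via the feature-map identity from Theorem~\ref{thm:generic_Phi} and the orthogonal decomposition $\cH_K = \cH_\Phi \oplus \cH_\Phi^\perp$, apply the regKernels representer theorem to the scalar kernel $\mathcal{K}_\Phi$, then lift the scalar representer identity to a $\cH_K$-valued Bochner integral and pair against $K(s,\cdot)z$) is precisely the unpacking of that sentence, with the Hölder estimate correctly legitimizing the Bochner step.
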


\subsection{The Optimization Problem - Functional Analysis}

By forsaking exactness in Theorem \ref{thm:repr_better} we can obtain, using basic functional analytic arguments, another representer result, of bigger scope but lesser power.

\begin{definition}
    Given a probability measure $\mu$ on the space $\cX$ we define the map $\iota_{\mu} : L_{\mu}^2(\cX;\bR) \to \cH_{\Phi} \subseteq \cH_{K}$ as 
    \[
    \iota_{\mu}\alpha = \int_{\cX} \alpha(x) \Phi_x d\mu(x)
    \]
    and the image space as $\cH_{\mu} := \iota_\mu  L_{\mu}^2(X;\bR)  \subseteq \cH_{\Phi} \subseteq \cH_{K}$. 
\end{definition}

\begin{lemma}
    If $x \mapsto \Phi_x$ is measurable and $\E_{\mu}[\mathcal{K}_{\Phi}(x,x)] < \infty$ then $\iota_{\mu}$ is a well defined bounded linear map with adjoint $\iota_{\mu}^{\top} : \cH_{K} \to L^2_{\mu}(\cX;\bR)$ given by
    \[
    \iota_{\mu}^{\top} {f}(x) := \sprod{f}{\Phi_x}_{\cH_{K}}. 
    \]
\end{lemma}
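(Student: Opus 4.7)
The plan is to reduce everything to standard facts about Bochner integration of Hilbert-space valued functions, using the assumption $\mathbb{E}_\mu[\mathcal{K}_\Phi(x,x)] < \infty$ as the integrability hypothesis that makes all integrals converge.

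First I would establish well-definedness of $\iota_\mu$. Since $\Phi_\cdot : \cX \to \cH_K$ is assumed measurable and $\|\Phi_x\|_{\cH_K}^2 = \sprod{\Phi_x}{\Phi_x}_{\cH_K} = \mathcal{K}_\Phi(x,x)$, the hypothesis gives $\|\Phi_\cdot\|_{\cH_K} \in L^2_\mu(\cX;\bR)$. Then for any $\alpha \in L^2_\mu(\cX;\bR)$, Cauchy--Schwarz in $L^2_\mu$ yields
\[
\int_{\cX} |\alpha(x)| \, \|\Phi_x\|_{\cH_K} \, d\mu(x) \leq \|\alpha\|_{L^2_\mu} \, \bigl(\mathbb{E}_\mu[\mathcal{K}_\Phi(x,x)]\bigr)^{1/2} < \infty,
\]
so $x \mapsto \alpha(x)\Phi_x$ is Bochner integrable and $\iota_\mu \alpha \in \cH_K$ is well-defined. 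Linearity in $\alpha$ is immediate from linearity of the Bochner integral, and the same inequality shows
\[
\|\iota_\mu \alpha\|_{\cH_K} \leq \|\alpha\|_{L^2_\mu} \, \bigl(\mathbb{E}_\mu[\mathcal{K}_\Phi(x,x)]\bigr)^{1/2},
\]
giving boundedness with explicit operator-norm bound.

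Next I would identify the adjoint. Define the candidate $T : \cH_K \to L^2_\mu(\cX;\bR)$ by $Tf(x) := \sprod{f}{\Phi_x}_{\cH_K}$. Measurability of $Tf$ follows from measurability of $\Phi_\cdot$ combined with continuity of the inner product, and Cauchy--Schwarz in $\cH_K$ gives $|Tf(x)| \leq \|f\|_{\cH_K}\|\Phi_x\|_{\cH_K}$, so $Tf \in L^2_\mu$ with $\|Tf\|_{L^2_\mu} \leq \|f\|_{\cH_K}(\mathbb{E}_\mu[\mathcal{K}_\Phi(x,x)])^{1/2}$. To verify $T = \iota_\mu^\top$, pair $\iota_\mu \alpha$ with $f \in \cH_K$ and pull the inner product inside the Bochner integral (a standard property of Bochner integrals against a continuous linear functional):
\[
\sprod{\iota_\mu \alpha}{f}_{\cH_K} = \int_{\cX} \alpha(x) \sprod{\Phi_x}{f}_{\cH_K} \, d\mu(x) = \sprod{\alpha}{Tf}_{L^2_\mu},
\]
which exhibits $T$ as the adjoint of $\iota_\mu$.

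There is no serious obstacle here; the only point requiring care is justifying the commutation of the Bochner integral with $\sprod{\cdot}{f}_{\cH_K}$, which follows from the general fact that continuous linear functionals commute with Bochner integration whenever the integrand is Bochner integrable, a condition we already verified. Everything else is routine Cauchy--Schwarz and linearity.
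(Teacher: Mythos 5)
Your proof is correct and follows essentially the same route as the paper: Cauchy--Schwarz together with $\|\Phi_x\|_{\cH_K}^2 = \mathcal{K}_\Phi(x,x)$ for well-definedness and boundedness of $\iota_\mu$ and of the candidate adjoint, then commutation of the inner product with the Bochner integral to verify adjointness. The only cosmetic difference is that the paper first uses the elementary bound $|\alpha(x)|\,\|\Phi_x\|_{\cH_K} \leq |\alpha(x)|^2 + \mathcal{K}_\Phi(x,x)$ to establish absolute integrability and then invokes Cauchy--Schwarz separately for the norm bound, whereas you obtain both at once from a single Cauchy--Schwarz application.
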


\begin{proof}

    Since for $x \in \cX$ one has
    \[
    \norm{\alpha(x)\Phi_x}_{\cH_{K}} =
    |\alpha(x)| \norm{\Phi_x}_{\cH_{K}}
    \leq |\alpha(x)|^2 + \mathcal{K}_{\Phi}(x,x)
    \]
    the map $x \mapsto \alpha(x)\Phi_x$ is measurable and absolutely integrable making $\iota_{\mu}$ a well defined linear map. By Cauchy-Schwartz we also get continuity:
    \[
    \norm{\iota_{\mu}\alpha}_{\cH_{K}} \leq 
    \int_{\cX} |\alpha(x)|\norm{\Phi_x}_{\cH_{K}} d\mu(x) \leq \norm{\alpha}_{L^2_{\mu}} 
    \E_{x \sim \mu}[\mathcal{K}_{\Phi}(x,x)]^{\frac{1}{2}}.
    \]

    Consider now the map $\iota_{\mu}^{\top}$ as above, this is linear and continuous since
    \[
        \norm{\iota_{\mu}^{\top}f}^2_{L^2_{\mu}}
        \leq 
        \norm{f}^2_{\cH_{K}} \E_{x \sim \mu}[\mathcal{K}_{\Phi}(x,x)].
    \]

   By linearity we also see that the two maps are adjoint:
    \begin{align*}
        \sprod{\iota_{\mu} \alpha}{f}_{\cH_{K}}
        &= \int  \alpha(x) \sprod{\Phi_x}{f}_ {\cH_{K}}d\mu(x)
        = \sprod{\alpha}{\sprod{f}{\Phi_{\cdot}}_{\cH_{K}}}_{L^2_{\mu}}
        = \sprod{\alpha}{\iota_{\mu}^{\top}f}_{L^2_{\mu}}.
    \end{align*}
\end{proof}

\begin{proposition}\label{prop:zero_dot}
The closure $\bar \cH_{\mu}$ of $\cH_{\mu}$ in $\cH_{\Phi}$ (or equivalently in $\cH_{K}$) is a complete subspace with the induced norm. This induces the decomposition 
$\cH_{K} = \bar \cH_{\mu} \oplus \cH_{\mu}^{\perp}$.
Moreover even if $\cH_{\Phi}^{\perp} \subseteq \cH_{\mu}^{\perp}$, whenever $f \in \cH_{\mu}^{\perp}$ then $\sprod{f}{\Phi_x}_{\cH_{K}} = 0$ $\mu$-a.s.
\end{proposition}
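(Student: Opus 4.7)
The plan is to separate the statement into two essentially independent pieces: the Hilbert-space decomposition, which is pure book-keeping once one recalls that the orthogonal complement of a set equals the orthogonal complement of its closure, and the $\mu$-almost sure vanishing, which is a one-line duality argument using the adjoint $\iota_\mu^\top$ computed in the preceding lemma.

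For the decomposition claim, I would first note that $\bar\cH_\mu$ is by construction a closed subspace of the Hilbert space $\cH_K$ (equivalently of $\cH_\Phi$, which itself sits as a closed subspace of $\cH_K$ via the embedding $i$ described earlier). Any closed subspace of a Hilbert space inherits completeness from the ambient inner product, so $\bar\cH_\mu$ is complete, and the classical projection theorem yields $\cH_K = \bar\cH_\mu \oplus \bar\cH_\mu^{\perp}$. To replace $\bar\cH_\mu^{\perp}$ with $\cH_\mu^{\perp}$ I would invoke the elementary identity $S^{\perp} = \bar S^{\perp}$, valid for any subset $S$ of a Hilbert space and immediate from continuity of the inner product. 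For the inclusion $\cH_\Phi^{\perp} \subseteq \cH_\mu^{\perp}$, I would just observe that by definition $\cH_\mu = \iota_\mu L^2_\mu(\cX;\bR) \subseteq \cH_\Phi$, so orthogonality to the larger space forces orthogonality to the smaller.

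The heart of the proposition is the pointwise-almost-sure vanishing. Here I would invoke the preceding lemma's identification of the adjoint, namely $\iota_\mu^{\top} f(x) = \sprod{f}{\Phi_x}_{\cH_K}$, seen as an element of $L^2_\mu(\cX;\bR)$. If $f \in \cH_\mu^{\perp}$, then for every $\alpha \in L^2_\mu(\cX;\bR)$ one has
\[
\sprod{\alpha}{\iota_\mu^{\top} f}_{L^2_\mu} = \sprod{\iota_\mu \alpha}{f}_{\cH_K} = 0,
\]
so $\iota_\mu^{\top} f$ is the zero element of $L^2_\mu$, which is exactly the statement $\sprod{f}{\Phi_x}_{\cH_K} = 0$ for $\mu$-a.e.\ $x \in \cX$.

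I do not anticipate any substantial obstacle; the only subtle conceptual point to flag is that the conclusion is necessarily almost-everywhere rather than everywhere, because $\iota_\mu^{\top}$ lands in the quotient $L^2_\mu$ rather than in a genuine pointwise function space. This is also precisely what prevents the inclusion $\cH_\Phi^{\perp} \subseteq \cH_\mu^{\perp}$ from being an equality in general: an element of $\cH_\mu^{\perp}$ may have a nonzero pairing with some $\Phi_x$ on a $\mu$-null set of base points, while genuine orthogonality to all of $\cH_\Phi$ requires vanishing everywhere.
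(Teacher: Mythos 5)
Your proof is correct and takes essentially the same approach as the paper: the decomposition follows from $\bar\cH_\mu$ being a closed subspace together with $\cH_\mu^\perp = \bar\cH_\mu^\perp$, and the $\mu$-a.s.\ vanishing follows from the adjoint identity $\iota_\mu^\top f = \sprod{f}{\Phi_\cdot}_{\cH_K}$ and the observation that orthogonality to $\cH_\mu$ forces $\iota_\mu^\top f = 0$ in $L^2_\mu$. Your closing remark on why $\cH_\Phi^\perp \subseteq \cH_\mu^\perp$ can be strict (pointwise vs.\ almost-everywhere vanishing) is a useful clarification not spelled out in the paper.
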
 

\begin{proof}
    $\cH_{\mu}$ is a linear subspace since it is a linear image of a vector space. We conclude the first part since  $\bar \cH_{\mu}$ is closed.
    Now if $f \in \cH_{\mu}^{\perp}$ then for all $\alpha \in L^2_{\mu}$
    \begin{align*}
        0 = \sprod{\iota_{\mu} \alpha }{f}_{\cH_{K}} 
        &= \sprod{\alpha }{\iota_{\mu}^{\top} f}_{L^2_{\mu}} 
        = \sprod{\alpha(\cdot) }{\sprod{f}{\Phi_{\cdot}}_{\cH_{K}}}_{L^2_{\mu}}
    \end{align*}
    hence one must have $\sprod{f}{\Phi_{\cdot}}_{\cH_{K}} = 0$ in $L^2_{\mu}$
    \emph{i.e.} $\sprod{f}{\Phi_x}_{\cH_{K}} = 0$ $\mu$-a.s.
\end{proof}

\begin{remark}
Note that this does not imply that $\mu$-a.s. one has $\Phi_x \in \bar \cH_{\mu}$, to have this in general one needs some separability assumptions on $\cH_{K}$.
\end{remark}

\begin{theorem}[Representer]\label{thm:representer}
    Let $J_{\mu} : (L^2_{\mu})^{1+N} \to \bR$ be continuous and such that every input only ever appears under $\E_{\mu}$ in $J_{\mu}$. A maximizing sequence for
    \[
    \max \limits_{f \in \cH_{K}} J_\mu(\sprod{f}{\Phi_{\cdot}}_{\cH_{K}}, \{ \sprod{f}{\iota_{\mu} \beta_i(\cdot)}_{\cH_{K}} \}_{i=1
    }^N) + \epsilon \norm{f}_{\cH_{K}}
    \]
    can be found in $\cH_{\mu}$. In particular any solution $f^* \in \cH_{K}$, if any exists, of minimal norm is in $\bar \cH_{\mu}$ and can be found as the $\cH_{K}$ limit 
    \[
    f^* = \lim\limits_{n \to \infty} \iota_{\mu} \alpha_n = 
    \lim\limits_{n \to \infty} \int_{\cX} \alpha_n(x) \Phi_x d\mu(x)
    \]
    where $(\alpha_n)$  is a maximising sequence of 
    \[
    \max \limits_{\alpha \in L^2_{\mu}(\cX; \bR)} J_{\mu}(\sprod{\alpha}{\iota_{\mu}^{\top} \Phi_{\cdot}}_{L^2_{\mu}}, \{ \sprod{\alpha}{\iota_{\mu}^{\top}\iota_{\mu} \beta_i(\cdot)}_{L^2_{\mu}} \}_{i=1}^N) + \epsilon \sprod{\alpha}{\iota_{\mu}^{\top}\iota_{\mu} \alpha}_{L^2_{\mu}}.
    \]
\end{theorem}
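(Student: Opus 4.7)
The plan is to show that the functional is invariant (as a value) under orthogonal projection of $f$ onto $\bar{\cH}_{\mu}$, and then to transfer the optimization from $\cH_K$ to $L^2_\mu$ via $\iota_\mu$. First I would use the decomposition $\cH_K = \bar{\cH}_\mu \oplus \bar{\cH}_\mu^\perp$ from Proposition~\ref{prop:zero_dot} to split any $f \in \cH_K$ as $f = f_\| + f_\perp$. Since each $\iota_\mu \beta_i \in \cH_\mu \subseteq \bar{\cH}_\mu$, we have $\sprod{f}{\iota_\mu \beta_i}_{\cH_K} = \sprod{f_\|}{\iota_\mu \beta_i}_{\cH_K}$ pointwise. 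For the first slot the key observation is Proposition~\ref{prop:zero_dot}, which gives $\sprod{f_\perp}{\Phi_x}_{\cH_K} = 0$ for $\mu$-a.e.\ $x$, so the functions $x \mapsto \sprod{f}{\Phi_x}_{\cH_K}$ and $x \mapsto \sprod{f_\|}{\Phi_x}_{\cH_K}$ coincide as elements of $L^2_\mu$. The hypothesis that every argument of $J_\mu$ appears only under $\E_\mu$ means $J_\mu$ factors through $L^2_\mu$-equivalence classes, so its value on $f$ equals its value on $f_\|$.

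Combined with Pythagoras $\norm{f}^2_{\cH_K} = \norm{f_\|}^2 + \norm{f_\perp}^2$, this invariance shows that projecting any optimizing sequence onto $\bar{\cH}_\mu$ produces an equally optimizing sequence in $\bar{\cH}_\mu$, and that any extremiser of minimal norm must satisfy $f_\perp^* = 0$, placing it in $\bar{\cH}_\mu$. Next, since by definition $\cH_\mu = \iota_\mu(L^2_\mu(\cX;\bR))$ is dense in $\bar{\cH}_\mu$, I would approximate the minimal-norm candidate $f^*$ by a sequence $\iota_\mu \alpha_n \to f^*$ in $\cH_K$. Plugging $f = \iota_\mu \alpha$ into the primal functional and applying the adjoint identity $\sprod{\iota_\mu \alpha}{g}_{\cH_K} = \sprod{\alpha}{\iota_\mu^\top g}_{L^2_\mu}$ from the preceding lemma (with $g = \Phi_\cdot$, $g = \iota_\mu \beta_i$, and $g = \iota_\mu \alpha$) rewrites every inner product in $\cH_K$ as an inner product in $L^2_\mu$, giving exactly the dual objective stated in the theorem. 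Continuity of $\iota_\mu$ together with continuity of $J_\mu$ on $(L^2_\mu)^{1+N}$ then guarantees that an optimizing sequence $(\alpha_n)$ in $L^2_\mu$ for the dual lifts to an optimizing sequence $\iota_\mu \alpha_n$ in $\cH_K$ for the primal, converging in $\cH_K$ to $f^*$ when such an extremiser exists.

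The main obstacle is the delicate interplay in Step~1: Proposition~\ref{prop:zero_dot} only gives $\mu$-a.s.\ vanishing of $\sprod{f_\perp}{\Phi_\cdot}_{\cH_K}$, which is genuinely weaker than pointwise vanishing, and this is precisely why the hypothesis that $J_\mu$'s inputs appear only under $\E_\mu$ (and not, say, pointwise in $x$) is essential rather than cosmetic. A secondary subtlety is the distinction between $\cH_\mu$ and $\bar{\cH}_\mu$: the theorem is careful to place an \emph{optimizing sequence} in $\cH_\mu$ (via the density of $\iota_\mu(L^2_\mu)$) while the extremiser itself, when it exists, need only lie in the closure $\bar{\cH}_\mu$, and this distinction must be respected when invoking continuity of $\iota_\mu$ to transfer between the primal and dual formulations.
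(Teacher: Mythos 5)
Your argument is essentially the same as the paper's (which is a one-line invocation of Proposition~\ref{prop:zero_dot}), but you flesh out the details carefully and correctly: the orthogonal split $f = f_\| + f_\perp$ with respect to $\cH_K = \bar\cH_\mu \oplus \cH_\mu^\perp$, the exact orthogonality $\sprod{f_\perp}{\iota_\mu\beta_i}_{\cH_K} = 0$ (since $\iota_\mu\beta_i \in \cH_\mu$), the $\mu$-a.e.\ vanishing $\sprod{f_\perp}{\Phi_x}_{\cH_K} = 0$ from Proposition~\ref{prop:zero_dot} so that the first argument of $J_\mu$ is unchanged as an $L^2_\mu$ class, Pythagoras for the norm, and then density of $\cH_\mu$ in $\bar\cH_\mu$ together with the adjoint identity $\sprod{\iota_\mu\alpha}{g}_{\cH_K} = \sprod{\alpha}{\iota_\mu^{\top}g}_{L^2_\mu}$ to pass to the dual formulation on $L^2_\mu$.

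One small remark, which applies equally to the paper's statement and proof: for the projection step to produce an \emph{at least as good} element (rather than a strictly worse one), the regularizer must \emph{penalize} norm, so the sign should read $-\epsilon\norm{f}_{\cH_K}$ rather than $+\epsilon\norm{f}_{\cH_K}$; the paper's proof (``decreases the norm but does not change the value of the quantity to maximize'') only parses if ``the quantity to maximize'' refers to $J_\mu$ alone and the norm term is subtracted. Your phrase ``equally optimizing'' would more precisely be ``at least as optimizing'' once that sign is fixed, and your subsequent minimal-norm reasoning is then correct. This is an inherited defect of the statement rather than a gap in your argument.
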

\begin{proof}
    It follows from Proposition \ref{prop:zero_dot} that, for any $f \in \cH_{K}$, discarding the projection on $\cH_{\mu}^{\perp}$ decreases the norm but does not change the value of the quantity to maximize.
\end{proof}

\begin{remark}
    We find the quadratic hedging problem choosing
    \[
    J_{\mathbb{Q}}(Z) = - \E_{\mathbb{Q}}[(\pi(X) - \pi_0 - Z)^2] 
    \]
    but the previous result applies to richer classes of losses than Thm. \ref{thm:repr_better} as, for example, ones dealing with variance like
    \[
    J_{\mu}(Z) = \E_{\mu}[Z] - \lambda \E_{\mu}\left[(Z - \E_{\mu}[Z])^2\right]
    \]
    or exponential ones like
    \[
    J_{\mu}(Z) = \E_{\mu}\left[\exp\left\{-(\pi(X) - \pi_0 - Z)^2\right\}\right].
    \]

    On the flip side this result is a very loose existence one and does not give the means to find the maximizing sequence.
\end{remark}

\newpage

\section{Empirical Validation}
\label{sec:experiments}

    Our methodology is model-agnostic and outputs the replicating portfolio solely based on past market data. In this experimental section, we validate our theoretical framework by examining a simple case of geometric Brownian motion (GBM) under the Black-Scholes model.
 
    In this setting, the optimal hedge is analytically known as the \emph{delta hedge}, and we aim for the output of our method to closely approximate this optimal solution.

    \vspace{5pt}    
    We consider here a GBM with volatility $\sigma = 0.2$
    \[
    S_0 = 1, \quad dS_t = \sigma S_t dB_t
    \]
    on a time horizon of $T = 30$ days, regularly sampled $M = 120$ times.

    Using the setup described in Theorem \ref{thm:Immanol}, we compute our kernel hedge with \emph{signature kernels}, constructing the Gram matrix with entries as defined in Eqn. (\ref{eqn:gram_sig}). This is performed for training samples \(\bX_N\) of increasing sizes $N \in \{10, 50, 100, 200\}$. Subsequently, we compute the optimal kernel hedges $t \mapsto f^*_N(Y|_{[0, t]})$ on a separate, independently sampled corpus $\mathbb{Y}$ consisting of $500$ realizations of $S$.

    \vspace{5pt}
    Figure \ref{fig:pnl_distrib} shows the \emph{PnL} distributions of the kernel hedges alongside the distribution from the optimal delta hedge. The results demonstrate that as the number of samples $N$ used for fitting increases, the PnL distribution converges to that of the optimal hedge.
    
    Figure \ref{fig:hedges} illustrates the hedging positions for a specific realization $Y \in \mathbb{Y}$, highlighting how the positions converge at all times to their theoretically optimal values. We note that the convergence deteriorates over time due to the coarseness of the time grid.
    
    

    \begin{figure}[h!]  
    \centering
    \includegraphics[width=9cm]{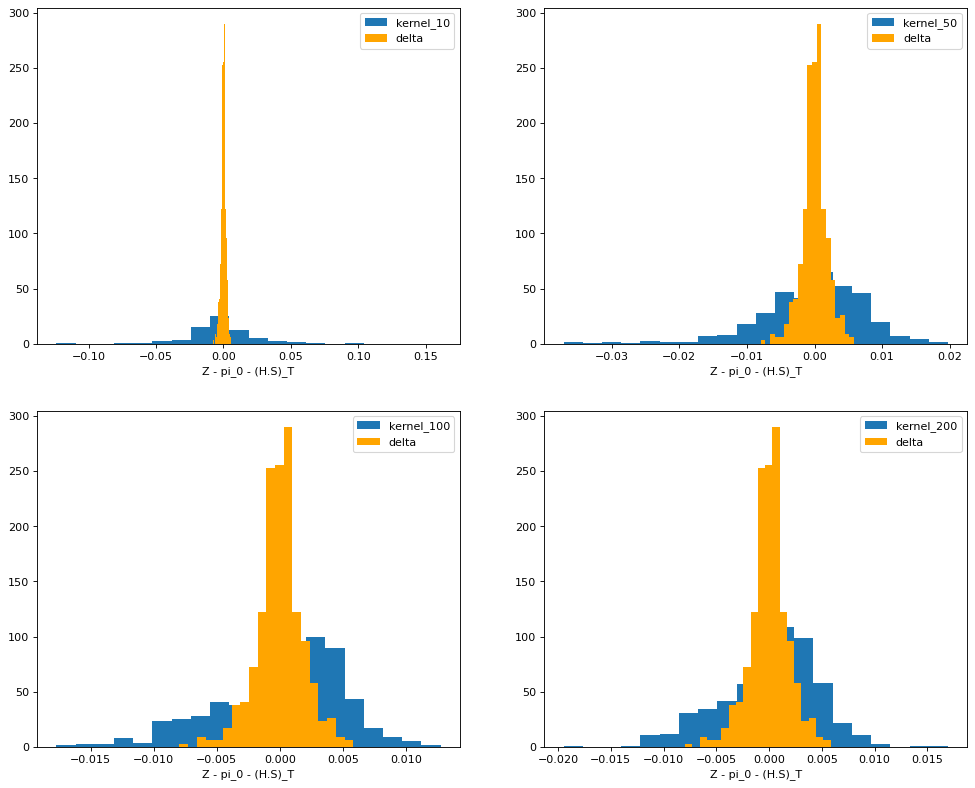}
    \caption{Distribution of PnLs with increasing training size (10, 50, 100, 200).}
    \label{fig:pnl_distrib}
    \end{figure}

    \begin{figure}[h!]  
    \centering
    \includegraphics[width=9cm]{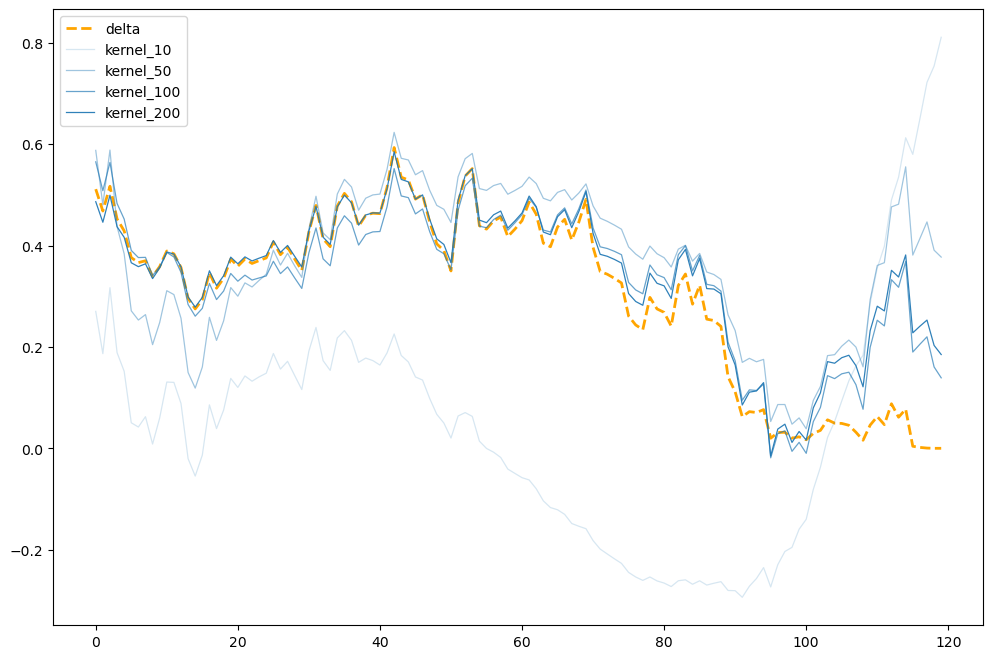}
    \caption{Positions for training sizes (10, 50, 100, 200).}
    \label{fig:hedges}
    \end{figure}

\section{Conclusion}

In conclusion, this work introduces a robust, signature-based hedging algorithm that leverages operator-valued kernels and un-truncated signature kernels to address the limitations of existing methods in high-dimensional financial markets. By employing minimal assumptions and modelling market dynamics as general geometric rough paths, our framework remains entirely model-free. We establish a representer theorem that guarantees a unique global minimizer, and we derive an analytic solution for a broad class of loss functions. This approach can incorporate any relevant auxiliary data—ranging from trading signals to market news or prior decisions—via the operator-valued kernel, thereby mirroring real-world trading practices and fostering more informed, machine-learning-driven hedging strategies.

\newpage

\appendix

\section{Rough Paths Primer}
\label{app:RP_Primer}

We here give a brief overview of Rough Paths theory, citing without proof the results we are using for our proofs.
We follow \cite{friz2020course, chevyrev_rough_2024, lyons2002system}, to which we refer the reader for further details.

Rough paths theory was developed by Lyons in the 1990's to make path-wise sense of equations of type
\begin{equation}\label{eqn:generic_dY}
    \int_0^t F(Y_s)dX_s \in \bR^N
\end{equation}
where $X: [0,1] \to \bR^d$ is a $\alpha$-Holder path with $\alpha \leq \frac{1}{2}$.

The main insight of the theory is that the information contained in $X$ alone is not sufficient to guarantee existence and uniqueness of solutions: additional \say{higher order} information is required, intuitively corresponding to \say{deep-enough} (up to $M := \lfloor{\frac{1}{\alpha} \rfloor}$) iterated integrals
\begin{equation}\label{eqn:iterated_ints}
    \int\limits_{s \leq r_1 \leq \cdots r_m \leq t} dX_{r_1} \otimes \cdots \otimes dX_{r_m} \in (\bR^d)^{\otimes m}.
\end{equation}

Think about the case where $X$ is a semi-martingale, is (\ref{eqn:generic_dY}) to be understood in It\^o or Stratonovich sense? These two notions of stochastic integration differ, and this difference is quantified by a second-order objet: the quadratic variation of $X$.



The extra information is encoded in continuous
\footnote{According to the normed structure of the truncated tensor algebra $T^{(M)}$ (\emph{c.f.} \cite[2.4]{friz2020course}).}
maps 
$$\mathbf{X}: \{(s,t) \in [0,1]^2 ~|~ s \leq t \} =: \Delta \to T^{(M)}(\bR^d) := \bigoplus_{m=0}^{M} (\bR^d)^{\otimes m}$$
extending $X$ in the sense that $\mathbf{X}^1_{s,t} = X_t - X_s$ and behaving like iterated integrals both algebraically, meaning that Chen's relation
\begin{equation}
\label{eqn:Chen}
    \mathbf{X}_{s,t} 
    = 
    \mathbf{X}_{s,u} \otimes \mathbf{X}_{u,s}
\end{equation}
holds for any $0\leq s \leq u \leq t \leq 1$, and analytically, meaning that there exist $C > 0$ such that
\begin{equation}
\label{eqn:analytic_rp_request}
   \forall ~ |I| \leq M, ~ \forall s \leq t, ~ |\sprod{\mathbf{X}_{s,t}}{e_I}| \leq C |s-t|^{|I|\alpha}.
\end{equation}

\begin{definition}
    The space of functions $\mathbf{X}: \Delta \to T^{(M)}(\bR^d)$ satisfying Chen's relation (\ref{eqn:Chen}) and the analytic requirement (\ref{eqn:analytic_rp_request}) is the space of \emph{$\alpha$-H\"older rough paths}, denoted by $\mathscr{C}^{\alpha}([0,1]; \bR^d)$.
\end{definition}

While the set of $\alpha$-H\"older rough paths does not have a vector-space structure, it can be endowed with a distance, and hence constitutes a metric space (\emph{c.f.} \cite[2.4]{friz2020course}).

\begin{remark}
    Every smooth path $X \in C^{\infty}([0,1];\bR^d)$ can be canonically lifted to an 
    element of $\mathscr{C}^{\alpha}([0,1]; \bR^d)$ for any $\alpha \in (0,1)$, by
    defining the $\mathbf{X}_{s,t}^m$ as the iterated integrals (\ref{eqn:iterated_ints}), 
    always defined under the smoothness hypotheses. 
    The space of such lifts is $\mathscr{L}^{\infty}(C^{\infty}) \subset \mathscr{C}^{\alpha}$.
\end{remark}

\begin{definition}
    The closure of $\mathscr{L}^{\infty}(C^{\infty})$ in $\mathscr{C}^{\alpha}([0,1]; \bR^d)$, is denoted by $\mathscr{C}^{0, \alpha}_g([0,1]; \bR^d)$ and is called the space of 
    \emph{geometric $\alpha$-H\"older rough paths}. 
\end{definition}

Using the properties of the chain rule one can prove that geometric rough paths possess additional algebraic structure, 
in particular the following \emph{shuffle identity} holds:
\begin{equation}\label{eqn:shuffle}
    \forall \mathbf{X} \in \mathscr{C}^{0, \alpha}_g([0,1]; \bR^d), ~ \forall I,J, ~  \sprod{\mathbf{X}}{e_I}\sprod{\mathbf{X}}{e_J} = \sprod{\mathbf{X}}{e_I \shuffle e_J}
\end{equation}

In our setting we will need to give meaning to simpler integrals than (\ref{eqn:generic_dY}): \say{linear} integrals of type 
\[
\int_0^t Y_s dX_s.
\]

This is accomplished by the notion of \emph{controlled} Rough Path (\emph{c.f.} \cite[Def. 4.6 \& Ex. 4.18]{friz2020course}):
\begin{definition}
    Let $\alpha \in (0,1)$, $M = \lfloor{\frac{1}{\alpha}} \rfloor$ and $W$ a Banach Space. Given $\mathbf{X} \in \mathscr{C}^{\alpha}([0,1]; \bR^d)$ we define the space of \emph{controlled (by $\mathbf{X}$) $W$-valued rough paths} as the set 
    \[
    \mathscr{D}^{M\alpha}_{\mathbf{X}}([0,1]; W) \subseteq \left\{ [0,1] \to \mathcal{L}(T^{(M-1)}(\bR^d); W) \right\}
    \]
    of functions $\mathbf{Y}$ satisfying 
    \begin{equation}
        \forall |I| \leq M - 1, ~ \norm{\sprod{\mathbf{Y}_t}{e_I} - \sprod{\mathbf{Y}_s}{\mathbf{X}_{s,t} \otimes e_I}}_{W} \leq C |t-s|^{(M-|I|)\alpha}
    \end{equation}
    We say that $\mathbf{Y}$ lifts $Y_t = \mathbf{Y}_t^{()} \in W$.
\end{definition}

\begin{proposition}
Given $\mathbf{X} \in \mathscr{C}^{\alpha}([0,1]; \bR^d)$ and $\mathbf{Y} \in \mathscr{D}^{M\alpha}_{\mathbf{X}}([0,1]; W)$ it is well defined, for all $i \in 1, \dots, d$, the integral
\[
\int_0^t Y_s \D\mathbf{X}^i_s := 
\lim\limits_{|\mathcal{P}| \to 0}
\sum_{[s,t] \in \mathcal{P}}
\sum_{|I| < M} \mathbf{Y}_s^I \mathbf{X}^{Ii}_{s,t} \in W
\]
\end{proposition}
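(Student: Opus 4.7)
The plan is to apply the Sewing Lemma to the local germ
\[
\Xi_{s,t}^i := \sum_{|I| < M} \mathbf{Y}_s^I\, \mathbf{X}^{Ii}_{s,t} \in W,
\]
whose compensated Riemann sums are precisely the expression appearing on the right-hand side of the statement. The task therefore reduces to verifying the two standard Sewing hypotheses: (i) the H\"older bound $\norm{\Xi_{s,t}^i}_W \lesssim |t-s|^\alpha$, and (ii) the sub-additivity bound $\norm{\delta\Xi_{s,u,t}^i}_W \lesssim |t-s|^\theta$ for some $\theta > 1$, where $\delta\Xi_{s,u,t}^i := \Xi_{s,t}^i - \Xi_{s,u}^i - \Xi_{u,t}^i$.

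Condition (i) is immediate from the analytic estimate $|\mathbf{X}^{Ii}_{s,t}| \lesssim |t-s|^{(|I|+1)\alpha}$ and the local boundedness of $I \mapsto \mathbf{Y}_s^I$ (the dominant contribution is $I = \emptyset$). The heart of the argument, and what I expect to be the main obstacle, is (ii), since it relies on an exact algebraic cancellation between Chen's relation for $\mathbf{X}$ and the Gubinelli-type expansion of $\mathbf{Y}$. First I would apply Chen's relation
\[
\mathbf{X}^{Ii}_{s,t} = \mathbf{X}^{Ii}_{s,u} + \sum_{I = JK} \mathbf{X}^J_{s,u}\, \mathbf{X}^{Ki}_{u,t}
\]
inside $\Xi_{s,t}^i$ and cancel against $\Xi_{s,u}^i$, yielding
\[
\Xi_{s,t}^i - \Xi_{s,u}^i \;=\; \sum_{|I| < M} \mathbf{Y}_s^I \sum_{I = JK} \mathbf{X}^J_{s,u}\, \mathbf{X}^{Ki}_{u,t}.
\]
Second, I would expand every $\mathbf{Y}_u^K$ appearing in $\Xi_{u,t}^i$ via the controlled-rough-path relation,
\[
\mathbf{Y}_u^K = \sum_{|J| \le M-1-|K|} \mathbf{Y}_s^{JK}\, \mathbf{X}^J_{s,u} + R^K_{s,u}, \qquad \norm{R^K_{s,u}}_W \lesssim |u-s|^{(M-|K|)\alpha}.
\]
Re-indexing the leading part by $I := JK$ (a bijection onto $\{|I|<M\}$ equipped with its splittings) produces exactly the double sum of the previous display, so the two cancel and one is left with
\[
\delta\Xi_{s,u,t}^i \;=\; -\sum_{|K| < M} R^K_{s,u}\, \mathbf{X}^{Ki}_{u,t}.
\]
Each summand is of size $|u-s|^{(M-|K|)\alpha}|t-u|^{(|K|+1)\alpha} \le |t-s|^{(M+1)\alpha}$, and since $M = \lfloor 1/\alpha \rfloor$ we have $(M+1)\alpha > 1$. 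This establishes (ii) with $\theta = (M+1)\alpha$.

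With (i) and (ii) in hand, the Sewing Lemma (see e.g.\ \cite{friz2020course}) produces a unique additive two-parameter function $\mathcal{I}^i : \Delta \to W$ with $\norm{\mathcal{I}^i_{s,t} - \Xi^i_{s,t}}_W \lesssim |t-s|^\theta$, and the Riemann sums of $\Xi^i$ over any sequence of partitions of $[0,t]$ with mesh tending to zero converge to $\mathcal{I}^i_{0,t}$. Setting $\int_0^t Y_s \D\mathbf{X}^i_s := \mathcal{I}^i_{0,t}$ then gives the integral asserted in the proposition.
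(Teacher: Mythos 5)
Your proposal is correct, and it is the standard proof of this result. The paper itself does not prove this proposition: it appears in the Rough Paths Primer (Appendix~A) as recalled background, with the surrounding definitions pointing to \cite{friz2020course} for the underlying theory; so you are supplying the argument the paper defers to the literature. Your argument is exactly the sewing-lemma route one finds there: identify the germ $\Xi^i_{s,t}=\sum_{|I|<M}\mathbf{Y}^I_s\mathbf{X}^{Ii}_{s,t}$, use Chen's relation to write $\Xi^i_{s,t}-\Xi^i_{s,u}$ as the double sum $\sum_{|J|+|K|<M}\mathbf{Y}^{JK}_s\mathbf{X}^J_{s,u}\mathbf{X}^{Ki}_{u,t}$, expand $\mathbf{Y}^K_u$ via the controlled-path remainder bound (which in the paper's notation is precisely $\norm{\mathbf{Y}^K_u-\mathbf{Y}_s[\mathbf{X}_{s,u}\otimes e_K]}_W\le C|u-s|^{(M-|K|)\alpha}$), and observe that the leading parts cancel so that $\delta\Xi^i_{s,u,t}=-\sum_{|K|<M}R^K_{s,u}\mathbf{X}^{Ki}_{u,t}$ with $\norm{\delta\Xi^i_{s,u,t}}_W\lesssim|t-s|^{(M+1)\alpha}$, and $(M+1)\alpha>1$ since $M=\lfloor 1/\alpha\rfloor$. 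The sewing lemma then yields the limit of compensated Riemann sums. All the index bookkeeping checks out, and the reindexing bijection $(I,\text{split }I=JK)\leftrightarrow(J,K)$ with $|J|+|K|<M$ is exactly right.
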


Given $W = \mathcal{L}(\bR^d; V)$, using the identification
$$\mathcal{L}((\bR^d)^{\otimes m}; W) \simeq \mathcal{L}((\bR^d)^{\otimes m + 1}; V) \text{ as } \phi(\mathbf{x})[y] \sim \phi[\mathbf{x} \otimes y], $$ 
one can then define
\[
\int_0^t Y_s[\D \mathbf{X}_s] = \sum_{i=1}^d \int_0^t Y_s[e_i] ~ \D \mathbf{X}^i_s := 
\lim\limits_{|\mathcal{P}| \to 0}
\sum_{[s,t] \in \mathcal{P}} 
\mathbf{Y}_s[\mathbf{X}_{s,t}] \in V
\]

\begin{proposition}[Thm. 3.7 \cite{lyons2007differential}]
Given $\mathbf{X} \in \mathscr{C}^{\alpha}([0,1];\bR^d)$ there exists a unique element $Sig(\mathbf{X})$ of $\Delta \to T((\bR^d))$ extending $\mathbf{X}: \Delta \to T^{\lfloor{\frac{1}{\alpha} \rfloor}}(\bR^d)$, satisfying Chen's relation
\[
    Sig(\mathbf{X})_{s,u} \otimes Sig(\mathbf{X})_{u,t} = Sig(\mathbf{X})_{s,t}
\]
and such that 
\[
\forall I, ~ \exists C_{|I|} \geq 0, ~ \forall s \leq t, ~ |\sprod{Sig(\mathbf{X})_{s,t}}{e_I}| \leq C_{|I|}|t-s|^{|I|\alpha} 
\]

\end{proposition}


\section{Operator Valued Kernels} \label{app:operator_kernels}

Recall the definition of Operator Valued Kernels:

\begin{definition}(Operator Valued Kernel)
    Given a set $\mathcal{Z}$ and a real Hilbert space $\mathcal{Y}$. 
    An $\mathcal{L}(\mathcal{Y})$-valued kernel is a map $K: \mathcal{Z} \times   \mathcal{Z} \to \mathcal{L}(\mathcal{Y})$ such that
    \begin{enumerate}[label=\roman*.]
        \item $\forall x,z \in \mathcal{Z}$ it holds $K(x,z) = K(z,x)^T$.
        \item For every $N \in \bN$ and $\{(z_i,y_i)\}_{i=1,\dots,N}\subseteq \mathcal{Z} \times \mathcal{Y}$ the matrix with entries $\sprod{K(z_i,z_j)y_i}{y_j}_{\mathcal{Y}}$ is semi-positive definite.
    \end{enumerate}

    Recall that the RKHS $\cH_{K} \xhookrightarrow{} \{\mathcal{Z} \to \mathcal{Y}\}$ has the following properties:
    \begin{itemize}
        \item $\forall z,y \in \mathcal{Z} \times \mathcal{Y}$ it holds $K(z,\cdot)y \in \cH_{K}$
        \item $\forall z,y \in \mathcal{Z} \times \mathcal{Y}$ and $\forall \phi \in \cH_{K}$ it holds $\sprod{\phi}{K(z,\cdot)y}_{\cH_{K}} = \sprod{\phi(z)}{y}_{\mathcal{Y}}$
    \end{itemize}
\end{definition}

\section{Well Posedness in Signature Case}
\label{app:wellDefSig}

\begin{lemma}
    Let $\alpha \in (0, 1)$ and $N = \lfloor{\frac{1}{\alpha}}\rfloor$. 
    Let $\bfX, \tilde{\bfX} \in \mathscr{C}^\alpha([0,T]; \bR^d)$  coinciding but for the last entry \emph{i.e.} such that 
    $$\forall 0 \leq m \leq N - 1. \quad 
    X^m = \tilde X^m$$
    Then $Sig(\mathbf{\tilde X}) \in l^2(\mathbb{W}_d)$ is controlled by $\mathbf{X}$, thus the choice of kernel 
    $$K(X,Y) = K_{sig}(\mathbf{\tilde X},\mathbf{\tilde Y}) Id_d$$
    satisfies the assumptions of Proposition \ref{prop:Phi_rough_all} with integration against $\D \mathbf{X}$.
\end{lemma}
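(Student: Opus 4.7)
The plan is to exhibit an explicit lift $\boldsymbol{K}(X)$ of $s\mapsto K(\bfX|_{[0,s]},\cdot)$ built directly from the tensor-algebra structure of $\mathrm{Sig}(\tilde{\mathbf{X}})$, and then to reduce both controlled rough-path estimates required by Proposition \ref{prop:Phi_rough_all} to Chen's identity combined with the hypothesis $X^j=\tilde X^j$ for $j\le N-1$.

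Under the identification provided by the example of Section \ref{sec:quadratic_example} (taking $\kappa=K_{\text{sig}}$ and $A=\mathrm{Id}_d$), the space $\cH_K$ embeds isometrically in $\cH_{K_{\text{sig}}}\otimes\bR^d\hookrightarrow\ell^2(\mathbb{W}_d)\otimes\bR^d$, via $K(\bfX|_{[0,s]},\cdot)v\,\leftrightarrow\,\mathrm{Sig}(\tilde{\mathbf{X}})_{0,s}\otimes v$. I would therefore define, for $|I|=m\le N-1$,
\[
K^m(X)_s[e_I]\,v \;:=\; \bigl(\mathrm{Sig}(\tilde{\mathbf{X}})_{0,s}\otimes e_I\bigr)\otimes v,
\]
where the inner $\otimes$ is concatenation in the tensor algebra $T((\bR^d))$ and the outer $\otimes$ is the Hilbert tensor in $\ell^2\otimes\bR^d$. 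For $m=0$ this reproduces the required identity $K^0(X)_s=K(\bfX|_{[0,s]},\cdot)$.

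To verify the Taylor-type estimate I substitute this definition and apply Chen's identity $\mathrm{Sig}(\tilde{\mathbf{X}})_{0,t}=\mathrm{Sig}(\tilde{\mathbf{X}})_{0,s}\otimes\mathrm{Sig}(\tilde{\mathbf{X}})_{s,t}$, obtaining for $|I|=m\le N-1$
\[
\boldsymbol{K}(X)_t[e_I]-\boldsymbol{K}(X)_s[\mathbf{X}_{s,t}\otimes e_I] \;=\; \bigl(\mathrm{Sig}(\tilde{\mathbf{X}})_{0,s}\otimes\bigl[\mathrm{Sig}(\tilde{\mathbf{X}})_{s,t}-\mathbf{X}_{s,t}^{\le N-1-m}\bigr]\otimes e_I\bigr)\otimes(\cdot),
\]
where $\mathbf{X}_{s,t}^{\le k}$ denotes the truncation of $\mathbf{X}_{s,t}$ to levels $\le k$. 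The decisive step is then the hypothesis: since $\tilde X^j=X^j$ for every $j\le N-1$, the projection of $\mathrm{Sig}(\tilde{\mathbf{X}})_{s,t}$ onto each level $j\le N-1-m$ coincides with $\mathbf{X}^j_{s,t}$, so the bracket lives entirely in tensor-algebra levels $\ge N-m$. The pure H\"older estimate is handled identically, replacing the bracket by $\mathrm{Sig}(\tilde{\mathbf{X}})_{0,t}-\mathrm{Sig}(\tilde{\mathbf{X}})_{0,s}$.

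The place I expect to be the main obstacle is the $\ell^2(\mathbb{W}_d)$ tail estimate $\|\mathrm{Sig}(\tilde{\mathbf{X}})_{s,t}^{(\ge k)}\|_{\ell^2}\lesssim|t-s|^{k\alpha}$ uniformly in $s\le t$, combined with the uniform bound $\sup_s\|\mathrm{Sig}(\tilde{\mathbf{X}})_{0,s}\|_{\ell^2}<\infty$ coming from the standing assumption $\mathrm{Sig}(\tilde{\mathbf{X}})\in\ell^2(\mathbb{W}_d)$. The former should follow from the classical factorial-decay estimate for the levels of the signature of an $\alpha$-H\"older rough path, but one must verify carefully that the sum over $j\ge k$ of the squared level-$j$ bounds is still $O(|t-s|^{2k\alpha})$ with constants uniform in $s,t$. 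Once these two analytic ingredients are in place, multiplying them together closes both estimates at the required exponents $(N-|I|)\alpha=(N-|I|)/p$ and $\alpha=1/p$, and Proposition \ref{prop:Phi_rough_all} applies, yielding the claim.
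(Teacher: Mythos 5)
Your proposal is correct and follows essentially the same route as the paper: identify $\cH_K\cong\cH_{\text{sig}}\otimes\bR^d$, define $K^m(X)_s[v]z=(\operatorname{Sig}(\tilde{\mathbf{X}})_{0,s}\otimes v)\boxtimes z$, reduce the Taylor remainder via Chen's identity to $\operatorname{Sig}(\tilde{\mathbf{X}})_{0,s}\otimes\operatorname{Sig}(\tilde{\mathbf{X}})^{\ge N-m}_{s,t}\otimes(\cdot)$ using the hypothesis $X^j=\tilde X^j$ for $j\le N-1$, and bound the tail. The $\ell^2(\mathbb{W}_d)$ tail estimate you flag as the main obstacle is closed exactly as you anticipate: the paper invokes the factorial-decay bound $\|\operatorname{Sig}(\tilde{\mathbf{X}})^n_{s,t}\|\le C|t-s|^{n\alpha}/n!$ from Lyons' extension theorem, and the square-summed tail is then $O(|t-s|^{(m+1)\alpha}e^{|t-s|^\alpha})$, uniform on $[0,T]$.
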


\begin{proof}
    From \cite[Thm. 3.1.3]{lyons2002system} we have that the signature lift satisfies 
    \[
    \norm{\sum_{|I|=n} Sig(\mathbf{\tilde X})_{s,t}^I \hspace{3pt} e_I}_{(\bR^d)^{\otimes n}} \leq  C \frac{|t-s|^{n\alpha}}{n!}
    \]
    for some constant $C$ possibly dependent on $X$ but not on $n$. Then for $m \in \mathbb{N}$
    \[
    \norm{Sig(\mathbf{\tilde X})_{s,t} - 
    \sum_{|I| \leq m} Sig(\mathbf{\tilde X})_{s,t}^I e_I
    }_{l^2(\mathbb{W}_d) } 
    \leq C |t-s|^{{(m+1)}{\alpha}} e^{|t-s|^{\alpha}} 
    \leq K |t-s|^{{(m+1)}{\alpha}}
    \]
    for some $K$ independent of $m$.
    Note how $\cH_{K} = (\cH_{Sig})^d \xhookrightarrow{} \cH_{Sig} \boxtimes \bR^d$ with $(v_1, \dots, v_d) \mapsto \sum_{i=1}^d v_i \boxtimes e_i$ and under this embedding 
    \[
    K(X)_s z = Sig(\mathbf{\tilde X})_{0,s} \boxtimes z
    \]
    then
    \[
    K(X)_t z - K(X)_s z = ( Sig(\mathbf{\tilde X})_{0,t} - Sig(\mathbf{\tilde X})_{0,s}) \boxtimes z = [ Sig(\mathbf{\tilde X})_{0,s}  \otimes (Sig(\mathbf{\tilde X})_{s,t} - 1)] \boxtimes z 
    \]
    where we have only used Chen's relation.
    Thus for some $\bar K$ independent of $s,t$
    \[
    \norm{K(X)_t - K(X)_s}_{\mathcal{L}(\bR^d, \cH_{K})} \leq \norm{Sig(\mathbf{\tilde X})_{0,s}} \norm{Sig(\mathbf{\tilde X})_{s,t} - 1} \leq \bar K |t-s|^{\alpha}
    \]

    Moreover, with an abuse of notation, we have
    \begin{align*}
        K(X)_t z 
        = & \hspace{3pt} Sig(\mathbf{\tilde X})_{0,t} \boxtimes z 
        = [ Sig(\mathbf{\tilde X})_{0,s} \otimes Sig(\mathbf{\tilde X})_{s,t}] \boxtimes z 
        \\
        = & \hspace{3pt} [ Sig(\mathbf{\tilde X})_{0,s} \otimes  Sig(\mathbf{\tilde X})_{s,t}^{< N} ] \boxtimes z +  [ Sig(\mathbf{\tilde X})_{0,s}  \otimes  Sig(\mathbf{\tilde X})_{s,t}^{\geq N} ] \boxtimes z 
    \end{align*}

    Define then $K^m(X)_s \in \mathcal{L}((\bR^d)^{\otimes m}, \mathcal{L}(\bR^d, \cH_{K}))$ for $m = 1, \cdots, N-1$ as
    \[
        ( K^m(X)_s v ) z :=  (Sig(\mathbf{\tilde X})_{0,s} \otimes  v) \boxtimes z 
    \]
    where $(v, z) \in (\bR^d)^{\otimes m} \times \bR^d$. 
    Then for all $m$ we have as needed
    \[
     \norm{K^m(X)_t - K^m(X)_s} \leq \norm{Sig(\mathbf{\tilde X})_{0,t} - Sig(\mathbf{\tilde X})_{0,s}} \leq \bar K |t-s|^{\alpha}
    \]

    Moreover 
    \begin{align*}
        & K^m(x)_t[v][z] - \sum_{k=0}^{N-1-m} K^{m+k}(x)_s [\mathbf{X}^{k}_{s,t} \otimes v] [z] =
        \\
        & (Sig(\mathbf{\tilde X})_{0,t} \otimes  v) \boxtimes z 
        - \sum_{k=0}^{N-1-m} (Sig(\mathbf{\tilde X})_{0,s} \otimes \mathbf{X}^{k}_{s,t} \otimes  v) \boxtimes z = 
        \\
        & (Sig(\mathbf{\tilde X})_{0,t} \otimes  v) \boxtimes z 
        - \left ( Sig(\mathbf{\tilde X})_{0,s} \otimes  Sig(\mathbf{X})^{< N-m}_{s,t} \otimes v \right) \boxtimes z =
        \\
        & \left[ \left( Sig(\mathbf{\tilde X})_{0,t} - Sig(\mathbf{\tilde X})_{0,s} \otimes  Sig(\mathbf{X})^{< N-m}_{s,t} \right) \otimes v\right] \boxtimes z
    \end{align*}
    and since by the assumption on the two lifts $\mathbf{X}$ and $\mathbf{\tilde X}$ we have 
    \[
    Sig(\mathbf{\tilde X})_{s,t}^{< N} = Sig(\mathbf{X})_{s,t}^{< N}
    \]
    we obtain finally 
    \begin{align*}
        & K^m(x)_t[v][z] - \sum_{k=0}^{N-1-m} K^{m+k}(x)_s [\mathbf{X}^{k} \otimes v] [z] =
        \\
        & \left[ \left( Sig(\mathbf{\tilde X})_{0,t} - Sig(\mathbf{\tilde X})_{0,s} \otimes  Sig(\mathbf{\tilde X})^{< N-m}_{s,t} \right) \otimes v\right] \boxtimes z =
        \\
        & \left( Sig(\mathbf{\tilde X})_{0,s} \otimes  Sig(\mathbf{\tilde X})^{\geq N-m}_{s,t}  \otimes v\right) \boxtimes z
    \end{align*}
    from which it follows
    \[
    \norm{K^m(x)_t - \sum_{k=0}^{N-1-m} K^{m+k}(x)_s [\mathbf{X}^{k} \otimes (\cdot) ] } \leq \bar K |t-s|^{{(N-m)}{\alpha}}
    \]
    exactly as needed.
\end{proof}

\begin{remark}
    The assumption on the lifts $Sig(\mathbf{\tilde X})_{s,t}^{< N} = Sig(\mathbf{X})_{s,t}^{< N}$ is crucial for the success of the proof. 
    Note however that in case $N=2$ this assumption is trivially satisfied since the first two levels of any lift are always the constant $1$ and the underlying path itself. In particular this means that in the stochastic case with $p \in (2,3]$ we can, for example, integrate the Stratonovich Kernel against the Ito increment or vice-versa!
\end{remark}

\newpage
\section{Feature Map Existence}
\label{app:sect:feature_map}

\begin{theorem}\label{thm:generic_Phi}
    Let $\alpha \in (0, 1)$, $N = \lfloor{\frac{1}{\alpha}}\rfloor$ and $x \in \cX$.
    Let $\xi(x) \in \Omega_T^\alpha(\cH_\xi)$ be the rough path
    $$\boldsymbol{\xi}(x) = (1, {\xi}^1(x),\cdots,\xi^N(x)): \Delta_{0,T} \to T^{(N)}(\cH_{\xi})$$
    and assume the existence of a path 
    \[
    \boldsymbol{K}(x) = (K^0(x), \cdots, K^{N-1}(x)) : [0,T] \to \bigoplus\limits_{m=0}^{N-1} \mathcal{L}(\cH_{\xi}^{\otimes m}; \mathcal{L}(\cH_{\xi}; \cH_{K})) 
    \]
    extending $K^0(x)_s := K(\psi(x)|_{[0,s]}, \cdot)$ and such that for every $m \leq N - 1$
    \begin{equation}
        \norm{K^m(x)_t - \boldsymbol{K}(x)_s[\boldsymbol{\xi}(x)_{s,t} \otimes (\cdot)]}_{\mathcal{L}(\cH_{\xi}^{\otimes m}; \mathcal{L}(\cH_{\xi}; \cH_{K})) } \leq C |t-s|^{(N-m)\frac{1}{p}}
    \end{equation}
    \begin{equation}
        \norm{K^m(x)_t - K^m(x)_s}_{\mathcal{L}(\cH_{\xi}^{\otimes m}; \mathcal{L}(\cH_{\xi}; \cH_{K})) } \leq C |t-s|^{\frac{1}{p}}
    \end{equation}
    then it is well defined in a rough sense\footnote{we implicity use the inclusion $\mathcal{L}(\cH_{\xi}^{\otimes m}; \mathcal{L}(\cH_{\xi}; \cH_{K}))  \xhookrightarrow{} \mathcal{L}(\cH_{\xi}^{\otimes m +1}; \cH_{K})$} 
    \begin{align*}
        \Phi_x & := \int_0^T K(\psi(x)|_{[0,t]}, \cdot) \D \boldsymbol{\xi}(x)_t 
        \\
        & := \lim\limits_{|\mathcal{P}| \to 0} \sum\limits_{[s,t]\in \mathcal{P}}
        \sum_{m = 0}^{N-1}  K^m(x)_s [\boldsymbol{\xi}(x)^{m+1}_{s,t}]
        \in \cH_{K}.
    \end{align*}
    Furthermore each $t \mapsto f(\psi(x)|_{[0,t]})$ is itself integrable against $\D \boldsymbol{\xi}$ in a rough sense and
    \begin{equation}\label{eqn:core_swap_rough}
        \int_0^T \left\langle f(\psi(x)|_{[0,t]}), \D \boldsymbol{\xi}(x)_t \right\rangle_{\cH_{\xi}} = \left\langle f, \Phi_x \right\rangle_{\cH_{K}}
    \end{equation}
\end{theorem}

\begin{proof}
    The well posedness of $\Phi_X$ is a consequence of \cite[Sect. 4.5]{friz2020course}.
    We use the notation of \cite[A.1]{roughFubini}. 
    Regarding the integrability of RKHS elements, note how for any $z \in \cH_{K}$
    \begin{align*}
        \sprod{f(\psi(x)|_{[0,t]})}{z}_{\cH_{\xi}} 
        = &  \hspace{5pt}
        \sprod{f}{K(\psi(x)|_{[0,t]}, \cdot)z}_{\cH_{K}}
        \\
        = & \hspace{5pt}
        \sprod{f}{K^0(x)_t[1][z]}_{\cH_{K}}
        \\
        = & \hspace{5pt}
        \sprod{f}{\boldsymbol{K}(x)_s[\boldsymbol{\xi}(x)_{s,t}][z] + R^{f}_{s,t}[z]}_{\cH_{K}}
    \end{align*}
    where for all times $s,t$
    \[
    \norm{R^{f}_{s,t}}_{\mathcal{L}(\cH_{\xi}; \cH_{K})} \leq C|t-s|^{N\alpha}
    \]

    This suggests to define
    \begin{equation}
        \mathbf{f}(x) = (f^0(x), \cdots, f^{N-1}(x)): [0,T] \to  \bigoplus\limits_{m=0}^{N-1} \mathcal{L}(\cH_{\xi}^{\otimes m}; \cH^*_{\xi}) 
    \end{equation}
    as
    \[
    \sprod{f^m(x)_t[v]}{z}_{\cH_{\xi}} := \sprod{f}{(K^m(x)_t[v])[z]}_{\cH_{K}}
    \]
    where $(v,z) \in \cH_{\xi}^{\otimes m} \times \cH_{\xi}$.
    Note then how for $m \leq N - 1$ and $v \in \cH_{\xi}^{\otimes m}$ one has
    \begin{align*}
        \norm{f^m(x)_t[v] - \boldsymbol{f}(x)_s[\boldsymbol{\xi}(x)_{s,t} \otimes v]}_{\cH_{\xi}^*} 
        = & \sup_{\norm{z} \leq 1} \sprod{f^m(x)_t[v] - \boldsymbol{f}(x)_s[\boldsymbol{\xi}(x)_{s,t} \otimes v]]}{z}_{\cH_{\xi}}
        \\
        = & \sup_{\norm{z} \leq 1} \sprod{f}{{K}^m(x)_t[v][z] - \boldsymbol{K}(x)_s[\boldsymbol{\xi}(x)_{s,t} \otimes v] [z]}_{\cH_{K}}
        \\
        \leq & \norm{f}_{\cH_{K}} \norm{{K}^m(x)_t[v] - \boldsymbol{K}(x)_s[\boldsymbol{\xi}(x)_{s,t} \otimes v ]}_{\mathcal{L}(\cH_{\xi};\cH_{K})}
    \end{align*}
    hence
    \begin{align*}
        \norm{f^m(x)_t - \boldsymbol{f}(x)_s[\boldsymbol{\xi}(x)_{s,t} \otimes (\cdot)]}_{\mathcal{L}(\cH_{\xi}^{\otimes m}; \cH_{\xi}^*)} 
        \leq & 
        \norm{f}_{\cH_{K}} \norm{{K}^m(x)_t - \boldsymbol{K}(x)_s[\boldsymbol{\xi}(x)_{s,t} \otimes (\cdot)]}_{\mathcal{L}(\cH_{\xi}^{\otimes m}; \mathcal{L}(\cH_{\xi};\cH_{K}))}
        \\
        \leq & \hspace{3pt} \norm{f}_{\cH_{K}} C |t-s|^{(N-m)\alpha}
    \end{align*}

    Similarly
    \begin{align*}
        \norm{f^m(x)_t[v] - f^m(x)_s[v]}_{\cH_{\xi}^*}
        = & 
        \sup_{\norm{z} \leq 1} 
        \sprod{f^m(x)_t[v] - f^m(x)_s[v]}{z}_{\cH_{\xi}}
        \\
        = & 
        \sup_{\norm{z} \leq 1} 
        \sprod{f}{(K^m(x)_t - K^m(x)_s)[v][z]}_{\cH_{K}}
        \\
        \leq & 
        \norm{f}_{\cH_{K}} \norm{K^m(x)_t - K^m(x)_s} \norm{v}_{\cH_{\xi}^{\otimes m}}
        \\
        \leq & 
        \norm{f}_{\cH_{K}} C |t-s|^{\alpha} \norm{v}_{\cH_{\xi}^{\otimes m}}
    \end{align*}
    thus similarly to before we get the bound
    \[
    \norm{f^m(x)_t - f^m(x)_s}_{\mathcal{L}(\cH_{\xi}^{\otimes m}; \cH_{\xi}^*)} \leq \norm{f}_{\cH_{K}} C |t-s|^{\alpha} 
    \]
    
    From these bounds one gets that the rough integral is well defined and, using the inclusions $\mathcal{L}(\cH_{\xi}^{\otimes m}; \mathcal{L}(V; W))\xhookrightarrow{} \mathcal{L}(\cH_{\xi}^{\otimes m} \otimes V; W)$ and $\mathcal{L}(\cH_{\xi};\bR) = \cH_{\xi}^*$, we have
    \begin{align*}
        \int_0^T \left\langle f(\psi(x)|_{[0,t]}), d \boldsymbol{\xi}(x)_t \right\rangle_{\cH_{\xi}} 
        := &  
        \lim\limits_{|\mathcal{P}| \to 0} \sum\limits_{[s,t]\in \mathcal{P}} 
        \sum_{m=0}^{N-1} f^m(x)_s[\boldsymbol{\xi}(x)_{s,t}^{m+1}]
        \\
        = &  
        \lim\limits_{|\mathcal{P}| \to 0} \sum\limits_{[s,t]\in \mathcal{P}} \sum_{m=0}^{N-1}
        \sprod{f}{K^m(x)_s[\boldsymbol{\xi}(x)_{s,t}^{m+1}]}_{\cH_{K}}
        \\
        = &  
        \lim\limits_{|\mathcal{P}| \to 0} 
        \sprod{f}{
        \sum\limits_{[s,t]\in \mathcal{P}} \sum_{m=0}^{N-1}
        K^m(x)_s[\boldsymbol{\xi}(x)_{s,t}^{m+1}]}_{\cH_{K}}
        \\ = & \hspace{5pt}
        \sprod{f}{\Phi_x}_{\cH_{K}}
    \end{align*}
\end{proof}

{

For the sake of completeness, we establish the existence of the feature map in the semi-martingale setting, relying solely on arguments from stochastic analysis:

\begin{lemma}
    Assume $X: \Omega \to C^0([0,T]; \bR^d)$ semi-martingale under $\bP$ and $\cH_{K}$ separable. 
    If the path
    \begin{equation}
        [0,T] \ni s \mapsto K(X|_{[0,s]}, \cdot) \in \cL(\bR^d, \cH_{K}) 
    \end{equation}
    is \emph{progressively measurable} with 
    \begin{equation}
        \E\left[ \int_0^T Tr(K(X|_{0,t}, X|_{0,t})) Tr(d[X,X]_t) \right] < \infty
    \end{equation}    
    then it is well defined 
    \[
    \Phi_X := \int_0^T K(X|_{[0,t]}, \cdot) \D X_t \in L^2(\Omega;\cH_{K})
    \]
    such that for any $f \in \cH_{K}$, in $L^2(\Omega;\bR)$ sense, holds
    \begin{equation}\label{eqn:core_swap}
        \int_0^T \left\langle f(X|_{[0,t]}), \D X_t \right\rangle_{\bR^d} = \left\langle f, \Phi_X \right\rangle_{\cH_{K}}
    \end{equation}
\end{lemma}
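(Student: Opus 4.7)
The plan is to interpret $\Phi_X$ as a Hilbert-space-valued It\^o integral, using the separability of $\cH_K$ to fall back on classical stochastic integration with values in a separable Hilbert space (see e.g.\ M\'etivier--Pellaumail). The integrand $s \mapsto K(X|_{[0,s]}, \cdot)$ takes values in $\cL(\bR^d, \cH_K)$ and is progressively measurable by assumption. The Hilbert--Schmidt norm squared of this integrand can be computed directly from the reproducing property:
\[
\sum_{i=1}^d \norm{K(X|_{[0,t]}, \cdot)e_i}_{\cH_K}^2
= \sum_{i=1}^d \sprod{K(X|_{[0,t]}, X|_{[0,t]})e_i}{e_i}_{\bR^d} = \operatorname{Tr}(K(X|_{[0,t]}, X|_{[0,t]})).
\]
The stated integrability hypothesis is therefore exactly the Burkholder--Davis--Gundy / It\^o-isometry condition, so the $\cH_K$-valued stochastic integral $\Phi_X$ is well defined as an element of $L^2(\Omega; \cH_K)$ with norm bounded by the quantity on the left-hand side of the assumption.

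Next, for the swap identity, I would argue by approximation through simple integrands. Let $\sigma^{(n)}_t = \sum_k \mathbb{1}_{(t_k, t_{k+1}]}(t)\,K(X|_{[0,t_k]}, \cdot)$ along a refining sequence of partitions, so that $\sigma^{(n)} \to K(X|_{[0,\cdot]}, \cdot)$ in the appropriate Hilbert--Schmidt $L^2$ sense provided by the isometry. For each simple integrand, directly from the reproducing property $K(X|_{[0,t_k]}, \cdot)^* f = f(X|_{[0,t_k]})$ one has the pointwise identity
\[
\sprod{f}{\int_0^T \sigma^{(n)}_t \D X_t}_{\cH_K}
= \sum_k \sprod{f}{K(X|_{[0,t_k]}, \cdot)(X_{t_{k+1}} - X_{t_k})}_{\cH_K}
= \sum_k \sprod{f(X|_{[0,t_k]})}{X_{t_{k+1}} - X_{t_k}}_{\bR^d}.
\]
Both sides of this identity converge in $L^2(\Omega; \bR)$ to the two sides of \eqref{eqn:core_swap}: the left-hand side by continuity of the $\cH_K$-inner product and convergence of $\int \sigma^{(n)} \D X$ to $\Phi_X$ in $L^2(\Omega;\cH_K)$, and the right-hand side as a Riemann--It\^o approximation of the real-valued stochastic integral.

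The remaining item is to justify that $\int_0^T \sprod{f(X|_{[0,t]})}{\D X_t}_{\bR^d}$ is itself a bona fide It\^o integral, i.e.\ that the integrand $t \mapsto f(X|_{[0,t]})$ is progressively measurable and square-integrable against $\D[X,X]$. Progressive measurability follows by composing $s \mapsto K(X|_{[0,s]}, \cdot)$ with the continuous adjoint operation and the fixed element $f$. For integrability, the reproducing property yields
\[
\norm{f(X|_{[0,t]})}_{\bR^d}^2 \leq \norm{f}_{\cH_K}^2 \cdot \operatorname{Tr}(K(X|_{[0,t]}, X|_{[0,t]})),
\]
which combined with the assumption bounds $\E[\int_0^T \|f(X|_{[0,t]})\|^2 \D \operatorname{Tr}[X,X]_t]$. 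The main technical obstacle I anticipate is the careful definition of the Hilbert-valued stochastic integral against a general vector semimartingale $X$, including the correct form of the It\^o isometry involving $\operatorname{Tr}(d[X,X]_t)$; once this machinery is invoked cleanly, the rest of the argument is algebraic manipulation via the reproducing property and a standard density/continuity step.
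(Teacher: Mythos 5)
Your proposal is correct and follows essentially the same route as the paper: both establish well-posedness of the $\cH_K$-valued stochastic integral via the trace bound $\|K(x,\cdot)\|^2 \leq \operatorname{Tr}(K(x,x))$ (you sharpen this to an equality in the Hilbert--Schmidt norm), and both obtain the swap identity from commutativity of the continuous linear functional $\sprod{f}{\cdot}_{\cH_K}$ with the stochastic integral, which you simply make explicit through the standard simple-integrand approximation argument.
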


\begin{proof}
    Note how for any $(x,z) \in \Lambda^\alpha\times \bR^d$
    \[
        \norm{K(x,\cdot)z}_{\cH_{K}}^2 = \sprod{z}{K(x,x)z} \leq \norm{z} \norm{K(x,x)z} \leq  \norm{z}_{\bR^d}^2 \norm{K(x,x)}_{op} \leq \norm{z}_{\bR^d}^2 Tr(K(x,x))
    \]
    since $K(x,x)$ is positive semidefinite, hence 
    \[
    \norm{K(x,\cdot)}_{\mathcal{L}(\bR^d;\cH_{K})}^2 \leq Tr(K(x,x))
    \]

    It follows that $\Phi_X$ is well defined in $L^2(\Omega;\cH_{K})$ as a stochastic integral.
    
    For any $f \in \cH_{K}$ the map $s \mapsto f(X|_{0,s})$ is clearly progressively measurable and the integral
    \[
    \int_0^T \left\langle f(X|_{[0,t]}), \D X_t \right\rangle_{\bR^d}
    \]
    is well defined since
    \[
    \norm{f(X|_{0,t})}^2_{\bR^d} \leq \norm{f}_{\cH_{K}}^2 Tr(K(X|_{0,t}, X|_{0,t}))
    \]
    hence
    \[
    \E\left[ \int_0^T \norm{f(X|_{0,t})}^2_{\bR^d} Tr(d[X,X]_t) \right]
    \leq
    \norm{f}_{\cH_{K}}^2 E\left[ \int_0^T Tr(K(X|_{0,t}, X|_{0,t})) Tr(d[X,X]_t) \right]
    \]
    thus the integral 
    \[
    \int_0^T \left\langle f(X|_{[0,t]}), \D X_t \right\rangle_{\bR^d} \in L^2(\Omega;\bR)
    \]
    is well defined.
    By the commutativity of linear functionals and stochastic integration 
    \begin{align*}
        \int_0^T \left\langle f(X|_{[0,t]}), \D X_t \right\rangle_{\bR^d}
        = &  \int_0^T \sprod{f}{K(X|_{0,t},\cdot)\D X_t}
        \\
        =&  \sprod{f}{\int_0^T K(X|_{0,t},\cdot)\D X_t} = \sprod{f}{\Phi_X}
    \end{align*}
    
\end{proof}

}

\newpage

\section{It\^o and Stratonovich Signatures}

\begin{definition}
Given a continuous semimartingale $X : [0,1] \to \R^d$ define the It\^o and Stratonovich Signatures as, respectively, the solutions in $T((\R^d))$ of
\begin{equation}
    S^{It\hat{o}}(X)_t = 1 + \int_0^t S^{It\hat{o}}(X)_s \otimes dX_s
\end{equation}
\begin{equation}
    S^{Str}(X)_t = 1 + \int_0^t S^{Str}(X)_s \otimes \circ dX_s
\end{equation}
\end{definition}

\begin{remark}
These equations need to be understood in coordinates i.e. for a word $I \in \W_d$ and a letter $j\in\{1,\dots,d\}$ one has
\begin{equation}
    S^{It\hat{o}}(X)^{Ij}_t = 1 + \int_0^t S^{It\hat{o}}(X)^{I}_s dX^j_s
\end{equation}
\begin{equation}
    S^{Str}(X)^{Ij}_t = 1 + \int_0^t S^{Str}(X)^{I}_s \circ dX^j_s
\end{equation}
\end{remark}

\begin{proposition}
Given a continuous semimartingale $X : [0,1] \to \R^d$ it holds
\begin{equation}
    S^{It\hat{o}}(X)_t = 1 + \int_0^t S^{It\hat{o}}(X)_s \otimes (\circ dX_s - \frac{1}{2}d[X,X]_s)
\end{equation}
where $[X,X] \in C^{1-var}([0,1];(\R^d)^{\otimes 2})$ is the quadratic variation process of $X$.
\end{proposition}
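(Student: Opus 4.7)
The plan is to work componentwise in the tensor algebra, i.e.\ fix a word $I=(i_1,\dots,i_n)\in\mathbb{W}_d$ and a letter $j\in\{1,\dots,d\}$, and show that the $Ij$-coordinate of the claimed identity reduces to the Itô defining relation after applying the standard Itô--Stratonovich conversion formula. The tensor product $S^{It\hat{o}}(X)_s\otimes(\,\cdot\,)$ is to be read as the algebraic concatenation of indices, so that $S^{It\hat{o}}(X)_s\otimes dX_s$ contributes to words of length $|I|+1$ and $S^{It\hat{o}}(X)_s\otimes d[X,X]_s$ to words of length $|I|+2$, the latter being interpreted as zero on words of length $\leq 1$.

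The first concrete step is to rewrite, for each word $I$ and letter $j$, the Stratonovich integral in terms of the Itô one using
\[
\int_0^t S^{It\hat{o}}(X)^{I}_s\circ dX^{j}_s \;=\; \int_0^t S^{It\hat{o}}(X)^{I}_s\, dX^{j}_s \;+\; \tfrac{1}{2}\bigl[S^{It\hat{o}}(X)^{I},\,X^{j}\bigr]_t,
\]
which is valid since $S^{It\hat{o}}(X)^{I}$ is a continuous semimartingale (being defined by an iterated Itô integral). The second step is to compute the bracket on the right-hand side explicitly: for the empty word the process $S^{It\hat{o}}(X)^{\emptyset}\equiv 1$ has zero quadratic variation against $X^{j}$, while for $|I|\geq 1$ writing $I=(I',i_n)$ the defining SDE gives $dS^{It\hat{o}}(X)^{I}_s = S^{It\hat{o}}(X)^{I'}_s\,dX^{i_n}_s$, so that
\[
\bigl[S^{It\hat{o}}(X)^{I},X^{j}\bigr]_t \;=\; \int_0^t S^{It\hat{o}}(X)^{I'}_s\,d[X^{i_n},X^{j}]_s.
\]

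The third step is to substitute this into the defining relation $S^{It\hat{o}}(X)^{Ij}_t=\int_0^t S^{It\hat{o}}(X)^{I}_s\,dX^{j}_s$ and recognise the right-hand side as the $Ij$-component of
\[
\int_0^t S^{It\hat{o}}(X)_s\otimes\circ dX_s \;-\; \tfrac{1}{2}\int_0^t S^{It\hat{o}}(X)_s\otimes d[X,X]_s,
\]
where the bracket correction contributes precisely to the component whose last two letters are $(i_n,j)$. Summing (or rather, noting that this holds for every word $Ij$) yields the claimed tensor identity.

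The main obstacle is purely bookkeeping: ensuring that the two tensor terms $S^{It\hat{o}}(X)_s\otimes\circ dX_s$ and $S^{It\hat{o}}(X)_s\otimes d[X,X]_s$ are graded consistently with the output word, and that the convention $[1,X^{j}]=0$ correctly accounts for the base case $I=\emptyset$. Once the indices are aligned, the identity is a direct one-line consequence of the Itô--Stratonovich conversion; no rough-path or regularity arguments are required beyond the existence of $S^{It\hat{o}}(X)$ as a solution of the Itô SDE, which is standard for continuous semimartingales.
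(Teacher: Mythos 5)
Your proposal is correct and follows essentially the same approach as the paper: work componentwise on words, apply the Itô--Stratonovich conversion formula to the defining Itô iterated integral, and compute the resulting quadratic covariation bracket explicitly via the defining SDE. The only cosmetic difference is that the paper indexes the word as $Ijk$ and shows the identity for length $\geq 2$, whereas you index as $Ij$ with $I = (I', i_n)$ and handle the empty-word base case explicitly; both are the same argument.
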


\begin{proof}
We want to prove that for any word $I \in \W_d$  and letters $j,k \in \{1,\dots,d\}$ it holds
\begin{equation}
    S^{It\hat{o}}(X)^{Ijk}_t = \int_0^t S^{It\hat{o}}(X)^{Ij}_s \circ dX^k_s - \frac{1}{2} \int_0^t S^{It\hat{o}}(X)^{I}_s d[X^j,X^k]_s
\end{equation}
By definition we have 
\[
S^{It\hat{o}}(X)^{Ijk}_t =  \int_0^t S^{It\hat{o}}(X)^{Ij}_s dX^k_s
\]
and using the definition of Stratonovich integral we get
\[
\int_0^t S^{It\hat{o}}(X)^{Ij}_s dX^k_s = \int_0^t S^{It\hat{o}}(X)^{Ij}_s \circ dX^k_s - \frac{1}{2} [S^{It\hat{o}}(X)^{Ij},X^k]_t
\]
which gives the result noticing how 
\[
[S^{It\hat{o}}(X)^{Ij},X^k]_t 
= [\int_0^{\cdot} S^{It\hat{o}}(X)^{I}_s dX^j_s,X^k]_t
= \int_0^t S^{It\hat{o}}(X)^{I}_s d[X^j,X^k]_s
\]
\end{proof}

\newpage
\section{It\^o Signature Kernel}
\label{app:sect:ItoSigKer}

\begin{definition}
Given continuous semimartingales $X, Y : [0,1] \to \R^d$ define the It\^o and Stratonovich Signature Kernels as, respectively,
\begin{equation*}
K^{It\hat{o}}(X,Y)_{s,t} = \sprod{S^{It\hat{o}}(X)_s}{S^{It\hat{o}}(Y)_t}_{\mathbb{H}} = 1 + \sum_{k=1}^d \int_{\sigma = 0}^s \int_{\tau = 0}^t K^{It\hat{o}}(X,Y)_{\sigma,\tau} dX^k_{\sigma} dY^k_{\tau}
\end{equation*}
\begin{equation*}
    K^{Str}(X,Y)_{s,t} = \sprod{S^{Str}(X)_s}{S^{Str}(Y)_t}_{\mathbb{H}} = 1 + \sum_{k=1}^d \int_{\sigma = 0}^s \int_{\tau = 0}^t K^{Str}(X,Y)_{\sigma,\tau} \circ dX^k_{\sigma} \circ dY^k_{\tau}
\end{equation*}
\end{definition}

Here is stated the main result:

\begin{theorem}
Given continuous semimartingales $X, Y : [0,1] \to \R^d$ their It\^o Signature Kernel can be written only in terms of Stratonovich integrals as the solution of the following system of 3 SDEs:

\begin{equation*}
    \R^d \ni G(s,t) = \int_{\tau = 0}^t \int_{\sigma = 0}^s K^{It\hat{o}}(X,Y)_{\sigma,\tau} d[Y,Y]^T_{\tau} \circ dX_{\sigma} - \frac{1}{2} \int_{\tau = 0}^t 
    d[Y, Y]^T_{\tau} F(s,\tau)
\end{equation*}
\begin{equation*}
    \R^d \ni F(s,t) = \int_{\tau = 0}^t \int_{\sigma = 0}^s K^{It\hat{o}}(X,Y)_{\sigma,\tau} d[X,X]^T_{\sigma} \circ dY_{\tau} - \frac{1}{2} \int_{\sigma = 0}^s
    d[X, X]^T_{\sigma} G(\sigma, t)
\end{equation*}
\begin{align*}
   \R \ni K^{It\hat{o}}(X,Y)_{s,t} = & 
    \hspace{5pt} 1 
    +  \int_0^s \int_0^t 
    K^{It\hat{o}}(X,Y)_{\sigma,\tau}  
    \left[ 
    \sprod{\circ dX_{\sigma}}{\circ dY_{\tau}}_{\R^d}
    + \frac{1}{4} Tr(d[X,X]^T_{\sigma} d[Y,Y]_{\tau})
    \right]
    \\
    & - \frac{1}{2} 
    \int_{0}^s \sprod{G(\sigma,t)}{\circ dX_{\sigma}}_{\R^d}
    - \frac {1}{2} 
    \int_{0}^t \sprod{F(s,\tau)}{\circ dY_{\tau}}_{\R^d}
\end{align*}

\end{theorem}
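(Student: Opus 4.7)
The strategy is to start from the Itô--Itô defining equation
\[
K^{It\hat{o}}(X,Y)_{s,t} = 1 + \sum_{k=1}^d \int_0^s \int_0^t K^{It\hat{o}}_{\sigma,\tau}\, dX^k_\sigma\, dY^k_\tau
\]
and systematically apply the Itô--Stratonovich formula $\int f\,dM = \int f\circ dM - \tfrac{1}{2}[f,M]$ to each nested integral. The vector-valued auxiliary processes $F$ and $G$ will be identified with accumulated quadratic-variation corrections, and their coupled equations will emerge from a second application of the conversion to their defining integrands.

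First I would record the semimartingale decomposition of $K^{It\hat{o}}_{\sigma,\tau}$ in each variable separately. Iterating the defining equation and applying a Fubini-type theorem for stochastic integrals yields
\[
d_\tau K^{It\hat{o}}_{\sigma,\tau} = \sum_{j=1}^d H_j(\sigma,\tau)\,dY^j_\tau, \quad H_j(\sigma,\tau) := \int_0^\sigma K^{It\hat{o}}_{\sigma',\tau}\, dX^j_{\sigma'},
\]
and symmetrically $d_\sigma K^{It\hat{o}}_{\sigma,\tau} = \sum_i \tilde H_i(\sigma,\tau)\, dX^i_\sigma$ with $\tilde H_i(\sigma,\tau) := \int_0^\tau K^{It\hat{o}}_{\sigma,\tau'}\, dY^i_{\tau'}$. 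Converting the inner $\tau$-integral of the defining equation then produces the correction $-\tfrac{1}{2} \sum_j \int_0^t H_j(\sigma,\tau)\,d[Y^j,Y^k]_\tau$; substituting back yields an expression whose outer integrals are ready to be converted as well.

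Three new contributions arise from the outer conversions: (a) a pure-Stratonovich double integral, producing the $\langle \circ dX, \circ dY\rangle$ term in the statement; (b) the correction $-\tfrac{1}{2} \int_0^t \langle F(s,\tau), \circ dY_\tau\rangle$ obtained from the $\sigma$-covariation of $\int_0^t K^{It\hat{o}}\circ dY^k$ with $X^k$, after a Fubini swap, where $F(s,\tau)^j := \sum_i \int_0^s \tilde H_i(\sigma,\tau)\,d[X^i,X^j]_\sigma$; (c) the correction $-\tfrac{1}{2} \int_0^s \langle G(\sigma,t), \circ dX_\sigma\rangle$ coming from the Stratonovich $\sigma$-conversion of the $H$-correction piece, where $G(\sigma,t)^k := \sum_j \int_0^t H_j(\sigma,\tau)\,d[Y^j,Y^k]_\tau$; and finally the trace term $\tfrac{1}{4} \sum_{i,j}\int_0^s\int_0^t K^{It\hat{o}}\,d[X^i,X^j]_\sigma\,d[Y^i,Y^j]_\tau$ arising as the $\sigma$-covariation between the $H\cdot d[Y,Y]$-correction and $X^k$. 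Up to a Fubini swap of indices, these reassemble precisely into the equation for $K^{It\hat{o}}$ in the statement.

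The recursive form of $F$ and $G$ is then obtained by converting the remaining Itô integral inside $H_j$ (resp.\ $\tilde H_i$) to Stratonovich: the covariation $[K^{It\hat{o}}_{\cdot,\tau},X^j]$ computed from the two-sided decomposition above contributes exactly the coupling term $-\tfrac{1}{2} \int d[X,X]^T\!\cdot G$ in the equation for $F$, and symmetrically $-\tfrac{1}{2} \int d[Y,Y]^T\!\cdot F$ in the equation for $G$. The main obstacle is the meticulous bookkeeping of nested integrals and the repeated use of Fubini-type theorems to exchange stochastic and finite-variation integrations; once the two-sided semimartingale decomposition is in place, each step is a routine application of the Itô--Stratonovich formula, and what remains is to match the algebraic structure of the accumulated corrections with the transposed matrix-vector products appearing in the statement, paying particular attention to the index symmetries generated by the transpose in $d[X,X]^T d[Y,Y]$.
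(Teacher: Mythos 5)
Your route is correct but genuinely different from the paper's. The paper works at the level of signature \emph{coordinates}: it first establishes that $S^{It\hat{o}}(X)$ obeys a Stratonovich-type equation with a $-\tfrac12\,d[X,X]$ correction, then expands the product $S^{It\hat{o}}(X)^{Ijk}_s\,S^{It\hat{o}}(Y)^{Ijk}_t$ into four pieces, sums over all words to recover kernel-like quantities, identifies the cross terms that do not immediately re-sum to $K^{It\hat{o}}$, defines $F$ and $G$ as the quadratic covariations $[K_{\cdot,t},X^j]_s$ and $[K_{s,\cdot},Y^j]_t$, and concludes by an explicit cancellation of the residual triple-integral terms. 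You never leave the kernel level: starting directly from the Itô PDE $K_{s,t}=1+\sum_k\int\!\!\int K\,dX^k\,dY^k$, you apply a stochastic Fubini argument to obtain the two-sided semimartingale decomposition $d_\tau K_{\sigma,\tau}=\sum_j H_j(\sigma,\tau)\,dY^j_\tau$ (and symmetrically in $\sigma$), then iterate the Itô--Stratonovich conversion so that $F$, $G$, and the $\tfrac14\,\mathrm{Tr}$ term appear directly as Kunita--Watanabe covariations of the successive correction pieces, with no cancellation step. Both arguments rest on the same underlying toolkit—Kunita--Watanabe plus interchange of stochastic integrations—but arrange the bookkeeping differently: the paper reduces to word-level algebra and re-sums, while you push the conversions through the kernel equation itself. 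Your route is more direct and avoids the signature expansion entirely, but it shifts the technical burden onto justifying the repeated stochastic-Fubini interchanges (both for Itô integrals and for quadratic covariations against nested integrals), which you correctly flag as the main obstacle; the paper's word-by-word route pays for avoiding some of these interchanges by carrying along triple integrals that must be shown to cancel.
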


\begin{proof}
Using the previous result one writes
\begin{align*}
 S^{It\hat{o}}(X)^{Ijk}_s \cdot S^{It\hat{o}}(Y)^{Ijk}_t = & 
    \int_0^s S^{It\hat{o}}(X)^{Ij}_{\sigma} \circ dX^k_{\sigma}\cdot
    {\int_0^t S^{It\hat{o}}(Y)^{Ij}_{\tau} \circ dY^k_{\tau}}
    \\
    & + \frac{1}{4} {\int_0^s S^{It\hat{o}}(X)^{I}_{\sigma} d[X^j,X^k]_{\sigma}}\cdot
    {\int_0^t S^{It\hat{o}}(Y)^{I}_{\tau} d[Y^j,Y^k]_{\tau}}
    \\
    & - \frac{1}{2} {\int_0^s S^{It\hat{o}}(X)^{I}_{\sigma} d[X^j,X^k]_{\sigma}}\cdot
    {\int_0^t S^{It\hat{o}}(Y)^{Ij}_{\tau} \circ dY^k_{\tau}}
    \\
    & - \frac{1}{2} {\int_0^s S^{It\hat{o}}(X)^{Ij}_{\sigma} \circ dX^k_{\sigma}}\cdot
    {\int_0^t S^{It\hat{o}}(Y)^{I}_{\tau} d[Y^j,Y^k]_{\tau}}
    \\
    = & 
    \int_0^s \int_0^t { S^{It\hat{o}}(X)^{Ij}_{\sigma}}
    { S^{It\hat{o}}(Y)^{Ij}_{\tau}}_{\mathbb{H}}\circ dX^k_{\sigma}\circ dY^k_{\tau}
    \\
    & + \frac{1}{4} \int_0^s \int_0^t 
    {S^{It\hat{o}}(X)^{I}_{\sigma}}
    {S^{It\hat{o}}(Y)^{I}_{\tau}}
    d[X^j,X^k]_{\sigma} d[Y^j,Y^k]_{\tau}
    \\
    & - \frac{1}{2} \int_0^s \int_0^t  
    {S^{It\hat{o}}(X)^{I}_{\sigma} d[X^j,X^k]_{\sigma}}
    {S^{It\hat{o}}(Y)^{Ij}_{\tau} \circ dY^k_{\tau}}
    \\
    & - \frac{1}{2} \int_0^s \int_0^t
    { S^{It\hat{o}}(X)^{Ij}_{\sigma} \circ dX^k_{\sigma}}
    {S^{It\hat{o}}(Y)^{I}_{\tau} d[Y^j,Y^k]_{\tau}}
\end{align*}

The only problematic terms are those of the form 
\[
\int_0^s \int_0^t  
    {S^{It\hat{o}}(X)^{I}_{\sigma} }
    {S^{It\hat{o}}(Y)^{Ij}_{\tau} d[X^j,X^k]_{\sigma} \circ dY^k_{\tau}}
\]
since one does not recover a term present in the expanded form of the kernel 
\emph{i.e.} in the expression
\begin{equation}
    K^{It\hat{o}}(X,Y)_{s,t} = \sum_{J \in \W_d}  {S^{It\hat{o}}(X)^J_s}{S^{It\hat{o}}(Y)^J_t}
\end{equation}
 One can however write 
\[
    S^{It\hat{o}}(Y)^{Ij}_{\tau} = \int_{0}^{\tau} S^{It\hat{o}}(Y)^{I}_{r} dY^i_r
\]
obtaining
\begin{align*}
    & \int_0^s \int_0^t  
    {S^{It\hat{o}}(X)^{I}_{\sigma}}
    {S^{It\hat{o}}(Y)^{Ij}_{\tau} d[X^j,X^k]_{\sigma} \circ dY^k_{\tau}} =
    \\
    & \int_0^s \int_0^t  
    {S^{It\hat{o}}(X)^{I}_{\sigma} }
    {\int_{0}^{\tau} S^{It\hat{o}}(Y)^{I}_{r} dY^j_r d[X^j,X^k]_{\sigma} \circ dY^k_{\tau}} =
    \\
    & \int_0^s \int_0^t  \int_{0}^{\tau}
    {S^{It\hat{o}}(X)^{I}_{\sigma} }
    { S^{It\hat{o}}(Y)^{I}_{r} d[X^j,X^k]_{\sigma} dY^j_r \circ dY^k_{\tau}} 
\end{align*}
Note that $dY^j_r$ is still an It\^o integral which we are going to remove later.

Summing over all words in $\W_d$ one thus obtains
\begin{align*}
    K^{It\hat{o}}(X,Y)_{s,t} = & 
    1 
    + \sum_{k=1}^d \int_0^s \int_0^t 
    K^{It\hat{o}}(X,Y)_{\sigma,\tau} 
    \circ dX^k_{\sigma} \circ dY^k_{\tau}
    \\
    & + \frac{1}{4} \sum_{j,k = 1}^d  \int_0^s \int_0^t 
    K^{It\hat{o}}(X,Y)_{\sigma,\tau} 
    d[X^j,X^k]_{\sigma} d[Y^j,Y^k]_{\tau}
    \\
    & - \frac{1}{2} \sum_{j,k = 1}^d  \int_0^s \int_0^t \int_{r = 0}^{\tau}
    K^{It\hat{o}}(X,Y)_{\sigma,r} 
    dY^j_r d[X^j,X^k]_{\sigma}  \circ dY^k_{\tau}
    \\
    & - \frac{1}{2} \sum_{j,k = 1}^d  \int_0^s \int_0^t \int_{u = 0}^{\sigma}
    K^{It\hat{o}}(X,Y)_{u,\tau} 
    dX^j_u \circ dX^k_{\sigma} d[Y^j,Y^k]_{\tau} 
\end{align*}

It remains to remove the It\^o integrals from the last two terms. 
To do this it suits to study the quantity
\begin{equation}
    \int_{r = 0}^{\tau} K^{It\hat{o}}(X,Y)_{\sigma,r}  dY^j_r 
    =
    \int_{r = 0}^{\tau} K^{It\hat{o}}(X,Y)_{\sigma,r}  \circ dY^j_r 
    - \frac{1}{2} [K^{It\hat{o}}(X,Y)_{\sigma,\cdot}, Y^j]_{\tau}
\end{equation}

Define at this point
\[
G(s,t) := ([K^{It\hat{o}}(X,Y)_{s,\cdot}, Y^j]_{t})_{j=1,\dots,d} \in \R^d
\]
and similarly
\[
F(s,t) := ([K^{It\hat{o}}(X,Y)_{\cdot,t}, X^j]_{s})_{j=1,\dots,d} \in \R^d
\]
then one has
\begin{align*}
    G(s,t)^j  = &  
    [K^{It\hat{o}}(X,Y)_{s,\cdot}, Y^j]_{t} 
    \\ = & 
    \sum_{k=1}^d 
    [\int_{\sigma = 0}^s \int_{\tau = 0}^{\cdot} K^{It\hat{o}}(X,Y)_{\sigma,\tau} dX^k_{\sigma} dY^k_{\tau}, Y^j]_{t} 
    \\ = & 
    \sum_{k=1}^d  
    \int_{\tau = 0}^t \int_{\sigma = 0}^s  K^{It\hat{o}}(X,Y)_{\sigma,\tau} dX^k_{\sigma} 
    d[Y^k, Y^j]_{\tau} 
    \\ = & 
    \sum_{k=1}^d  
    \int_{\tau = 0}^t \int_{\sigma = 0}^s  K^{It\hat{o}}(X,Y)_{\sigma,\tau} \circ dX^k_{\sigma} 
    d[Y^k, Y^j]_{\tau} 
    \\ & - \frac{1}{2}
    \sum_{k=1}^d  \int_{\tau = 0}^t [K^{It\hat{o}}(X,Y)_{\cdot,\tau}, X^k]_s
    d[Y^k, Y^j]_{\tau} 
    \\ = & 
    \sum_{k=1}^d  
    \int_{\tau = 0}^t \int_{\sigma = 0}^s  K^{It\hat{o}}(X,Y)_{\sigma,\tau} \circ dX^k_{\sigma} 
    d[Y^k, Y^j]_{\tau} 
    \\ & - \frac{1}{2}
    \sum_{k=1}^d  \int_{\tau = 0}^t F(s,\tau)^k
    d[Y^k, Y^j]_{\tau} 
\end{align*}
which can be written in vector form as
\begin{equation}\label{eqn:G_pde}
    G(s,t) = \int_{\tau = 0}^t \int_{\sigma = 0}^s K^{It\hat{o}}(X,Y)_{\sigma,\tau} d[Y,Y]^T_{\tau} \circ dX_{\sigma} - \frac{1}{2} \int_{\tau = 0}^t 
    d[Y, Y]^T_{\tau} F(s,\tau)
\end{equation}
Similarly one has 
\begin{equation}
    F(s,t) = \int_{\tau = 0}^t \int_{\sigma = 0}^s K^{It\hat{o}}(X,Y)_{\sigma,\tau} d[X,X]^T_{\sigma} \circ dY_{\tau} - \frac{1}{2} \int_{\sigma = 0}^s
    d[X, X]^T_{\sigma} G(\sigma, t)
\end{equation}

Rewriting in vector form the equalities regarding the Kernel one obtains 
\begin{align*}
   K^{It\hat{o}}(X,Y)_{s,t} = & 
    1 
    +  \int_0^s \int_0^t 
    K^{It\hat{o}}(X,Y)_{\sigma,\tau} 
    \sprod{\circ dX_{\sigma}}{\circ dY_{\tau}}_{\R^d}
    \\
    & + \frac{1}{4}  \int_0^s \int_0^t 
    K^{It\hat{o}}(X,Y)_{\sigma,\tau} 
    Tr(d[X,X]^T_{\sigma} d[Y,Y]_{\tau})
    \\
    & - \frac{1}{2} \int_0^s \int_0^t \int_{r = 0}^{\tau}
    K^{It\hat{o}}(X,Y)_{\sigma,r} 
    (\circ dY_r)^T d[X,X]_{\sigma}  \circ dY_{\tau}
    \\
    & + \frac{1}{4}  \int_0^s \int_0^t G(\sigma,\tau)^T d[X,X]_{\sigma}  \circ dY_{\tau}
    \\
    & - \frac{1}{2} \int_0^s \int_0^t \int_{u = 0}^{\sigma}
    K^{It\hat{o}}(X,Y)_{u,\tau} 
    ( \circ dX_u)^T d[Y,Y]_{\tau} \circ dX_{\sigma} 
    \\
    & + \frac{1}{4} \int_0^s \int_0^t F(\sigma,\tau)^T d[Y,Y]_{\tau}  \circ dX_{\sigma}
\end{align*}

Notice now how by Eqn (\ref{eqn:G_pde}) one has
\begin{align*}
    \frac{1}{2} \int_0^s \int_0^t F(\sigma,\tau)^T d[Y,Y]_{\tau}  \circ dX_{\sigma} 
    = &
    \int_{0}^s \int_{0}^t \int_{u=0}^{\sigma} K^{It\hat{o}}(X,Y)_{u,\tau} (\circ dX_{u} )^T d[Y,Y]_{\tau} \circ dX_{\sigma} 
    \\
    & - \int_{0}^s G(\sigma,t)^T \circ dX_{\sigma} 
\end{align*}
and similarly
\begin{align*}
    \frac{1}{2} \int_0^t \int_0^s G(\sigma,\tau)^T d[X,X]_{\sigma}  \circ dY_{\tau} 
    = &
    \int_{0}^t \int_{0}^s \int_{r=0}^{\tau} K^{It\hat{o}}(X,Y)_{\sigma,r} (\circ dY_{r} )^T d[X,X]_{\sigma} \circ dY_{\tau} 
    \\
    & - \int_{0}^t F(s,\tau)^T \circ dY_{\tau} 
\end{align*}
thus the triple integral terms cancel out leaving us with
\begin{align*}
   K^{It\hat{o}}(X,Y)_{s,t} = & 
    1 
    +  \int_0^s \int_0^t 
    K^{It\hat{o}}(X,Y)_{\sigma,\tau}  
    \left[ 
    \sprod{\circ dX_{\sigma}}{\circ dY_{\tau}}_{\R^d}
    + \frac{1}{4} Tr(d[X,X]^T_{\sigma} d[Y,Y]_{\tau})
    \right]
    \\
    & - \frac{1}{2} 
    \int_{0}^s G(\sigma,t)^T \circ dX_{\sigma} 
    - \frac{1}{2} 
    \int_{0}^t F(s,\tau)^T \circ dY_{\tau} 
\end{align*}

\end{proof}

\begin{remark}
    If one writes 
    \[
    \delta\X_t = 
    \begin{bmatrix}
    \circ dX^T_t 
    \\
    d[X,X]_t
    \end{bmatrix}
    \in \R^{(d+1)\times d}
    \]
    and similarly for $\delta \Y$ then one can write
    \begin{align*}
        \begin{bmatrix}
        K \\ G \\ F
        \end{bmatrix}_{s,t} 
        = 
        \begin{bmatrix}
        1 \\ 0 \\ 0
        \end{bmatrix}
        & +
        \int_0^s \int_0^t K_{\sigma,\tau} 
        \begin{bmatrix}
        Tr(\delta\Y_{\tau} \delta\X^T_{\sigma}) 
        \\ 
        2 [\delta\Y_{\tau} \delta\X^T_{\sigma}]^0_{1,\dots,d} 
        \\ 
        2 [\delta\X_{\sigma}\delta\Y^T_{\tau}]^0_{1,\dots,d}
        \end{bmatrix}
        \\
        & -
        \int_0^t 
        \begin{bmatrix}
        \frac{1}{2} [\delta \Y_{\tau}]_0
        \\ 
        [\delta \Y_{\tau}]_{1,\dots,d}
        \\ 
        0
        \end{bmatrix}
        F_{s,\tau} 
        \\
        & - 
        \int_0^s
        \begin{bmatrix}
        \frac{1}{2} [\delta \X_{\sigma}]_0
        \\ 
        0
        \\ 
        [\delta \X_{\sigma}]_{1,\dots,d}
        \end{bmatrix}
        G_{\sigma,t} 
    \end{align*}
\end{remark}

Writing moreover
\[
\mathbb{K}_{s,t} := \begin{bmatrix}
    K \\ G \\ F
    \end{bmatrix}_{s,t} 
\]

\[
\delta \mathbb{Z}_{s,t} := K_{\sigma,\tau} \begin{bmatrix}
    Tr(\delta\Y_{t} \delta\X^T_{s}) 
    \\ 
    2 [\delta\Y_{t} \delta\X^T_{s}]^0_{1,\dots,d} 
    \\ 
    2 [\delta\X_{s}\delta\Y^T_{t}]^0_{1,\dots,d}
    \end{bmatrix}
\]

\[
\delta \mathbb{F}_{s,t} = \begin{bmatrix}
    \frac{1}{2} [\delta \Y_{t}]_0
    \\ 
    [\delta \Y_{t}]_{1,\dots,d}
    \\ 
    0
    \end{bmatrix}
    F_{s,t} 
\]
and analogously for $\delta \mathbb{G}$ we obtain
\[
\mathbb{K}_{s,t} = \begin{bmatrix}
    1 \\ 0 \\ 0
    \end{bmatrix}
    +
    \int_0^s \int_0^t \delta \mathbb{Z}_{\sigma, \tau}
    -
    \int_0^t \delta \mathbb{F}_{s,\tau}
    -
    \int_0^s \delta \mathbb{G}_{\sigma, t}
\]

Note then that
\begin{align*}
     \mathbb{K}_{s,t + \delta t} + \mathbb{K}_{s + \delta s,t} - \mathbb{K}_{s,t}
    = &
    \begin{bmatrix}
    1 \\ 0 \\ 0
    \end{bmatrix} 
    +
    \begin{bmatrix}
    1 \\ 0 \\ 0
    \end{bmatrix}
    - 
    \begin{bmatrix}
    1 \\ 0 \\ 0
    \end{bmatrix}
    \\
    &
    +
    \left( \int_0^s \int_0^{t + \delta t}  +  \int_0^{s + \delta s} \int_0^{t} -  \int_0^s \int_0^{t}\right) \delta \mathbb{Z}_{\sigma, \tau}
    \\
    & -
    \left( \int_0^{t + \delta t}  -  \int_0^{t}\right)
    \delta \mathbb{F}_{s,\tau}
    -
    \int_0^{t} \delta \mathbb{F}_{s + \delta s,\tau}
    \\
    & - 
    \left(\int_0^{s + \delta s} -  \int_0^s \right)
    \delta \mathbb{G}_{\sigma,t}
    -
    \int_0^{s} \delta \mathbb{G}_{\sigma,t + \delta t}
    \\
    = &
    \begin{bmatrix}
    1 \\ 0 \\ 0
    \end{bmatrix} 
    +
    \left(\int_0^{s + \delta s}  \int_0^{t + \delta t} - \int_s^{s + \delta s}  \int_t^{t + \delta t}\right) \delta \mathbb{Z}_{\sigma, \tau}
    \\
    & -
    \int_t^{t + \delta t}
    \delta \mathbb{F}_{s,\tau}
    -
    \left( \int_0^{t + \delta t}  -  \int_t^{t + \delta t}\right)
    \delta \mathbb{F}_{s + \delta s,\tau}
    \\
    & - 
    \int_s^{s + \delta s} 
    \delta \mathbb{G}_{\sigma,t}
    -
    \left(\int_0^{s + \delta s} -  \int_s^{s + \delta s} \right)
    \delta \mathbb{G}_{\sigma,t + \delta t}
    \\
    = & \hspace{5pt} 
    \mathbb{K}_{s+\delta s, t+\delta t}
    -
    \int_s^{s + \delta s}  \int_t^{t + \delta t}  \delta \mathbb{Z}_{\sigma, \tau}
    \\
    & 
    +
    \int_t^{t + \delta t} (\delta \mathbb{F}_{s + \delta s,\tau} - \delta\mathbb{F}_{s,\tau})
    +
    \int_s^{s + \delta s}  (\delta\mathbb{G}_{\sigma,t + \delta t} - \delta\mathbb{G}_{\sigma,t}) 
\end{align*}

This gives an Euler discretization on grids of the type
\begin{align*}
    \mathbb{K}_{s_{n+1}, t_{m+1}} = & \hspace{5pt}
    \mathbb{K}_{s_{n}, t_{m+1}} + \mathbb{K}_{s_{n+1}, t_{m}} + \mathbb{K}_{s_{n}, t_{m}}
    + \Delta \mathbb{Z}_{s_n, t_m} 
    \\
    & 
    - (\Delta \mathbb{F}_{s_{n+1},t_m} - \Delta\mathbb{F}_{s_n,t_m})
    - (\Delta \mathbb{G}_{s_{n},t_{m+1}} - \Delta\mathbb{G}_{s_n,t_m})
\end{align*}


\bibliographystyle{spmpsci}  
\bibliography{references}   

\end{document}